\numberwithin{equation}{section}
\numberwithin{figure}{section}
\theoremstyle{plain}
\newtheorem{thm}{\protect\theoremname}[section]
\theoremstyle{plain}
\newtheorem{lem}[thm]{\protect\lemmaname}
\theoremstyle{plain}
\newtheorem{prop}[thm]{\protect\propositionname}
\theoremstyle{remark}
\newtheorem{rem}[thm]{\protect\remarkname}
\providecommand{\theoremname}{Theorem}
\providecommand{\lemmaname}{Lemma}
\providecommand{\remarkname}{Remark}
\providecommand{\propositionname}{Proposition}
\begin{document}
	\global\long\def\R{\mathbf{\mathbb{R}}}%
	\global\long\def\C{\mathbf{\mathbb{C}}}%
	\global\long\def\Z{\mathbf{\mathbb{Z}}}%
	\global\long\def\N{\mathbf{\mathbb{N}}}%
	\global\long\def\T{\mathbb{T}}%
	\global\long\def\Im{\mathrm{Im}}%
	\global\long\def\Re{\mathrm{Re}}%
	\global\long\def\Hc{\mathcal{H}}%
	\global\long\def\M{\mathbb{M}}%
	\global\long\def\P{\mathbb{P}}%
	\global\long\def\L{\mathcal{L}}%
	\global\long\def\F{\mathcal{\mathcal{F}}}%
	\global\long\def\s{\sigma}%
	\global\long\def\Rc{\mathcal{R}}%
	\global\long\def\W{\tilde{W}}%
	\global\long\def\G{\mathcal{G}}%
	\global\long\def\d{\partial}%
	\global\long\def\mc#1{\mathcal{\mathcal{#1}}}%
	\global\long\def\Right{\Rightarrow}%
	\global\long\def\Left{\Leftarrow}%
	\global\long\def\les{\lesssim}%
	\global\long\def\hook{\hookrightarrow}%
	\global\long\def\D{\mathbf{D}}%
	\global\long\def\rad{\mathrm{rad}}%
	\global\long\def\d{\partial}%
	\global\long\def\jp#1{\langle#1\rangle}%
	\global\long\def\norm#1{\|#1\|}%
	\global\long\def\ol#1{\overline{#1}}%
	\global\long\def\wt#1{\widehat{#1}}%
	\global\long\def\tilde#1{\widetilde{#1}}%
	\global\long\def\br#1{(#1)}%
	\global\long\def\Bb#1{\Big(#1\Big)}%
	\global\long\def\bb#1{\big(#1\big)}%
	\global\long\def\lr#1{\left(#1\right)}%
	\global\long\def\la{\lambda}%
	\global\long\def\al{\alpha}%
	\global\long\def\be{\beta}%
	\global\long\def\ga{\gamma}%
	\global\long\def\La{\Lambda}%
	\global\long\def\De{\Delta}%
	\global\long\def\na{\nabla}%
	\global\long\def\fl{\flat}%
	\global\long\def\sh{\sharp}%
	\global\long\def\calN{\mathcal{N}}%
	\global\long\def\avg{\mathrm{avg}}%
	\global\long\def\bbR{\mathbf{\mathbb{R}}}%
	\global\long\def\bbC{\mathbf{\mathbb{C}}}%
	\global\long\def\bbZ{\mathbf{\mathbb{Z}}}%
	\global\long\def\bbN{\mathbf{\mathbb{N}}}%
	\global\long\def\bbT{\mathbb{T}}%
	\global\long\def\bfD{\mathbf{D}}%
	\global\long\def\calF{\mathcal{\mathcal{F}}}%
	\global\long\def\calH{\mathcal{H}}%
	\global\long\def\calL{\mathcal{L}}%
	\global\long\def\calO{\mathcal{O}}%
	\global\long\def\calR{\mathcal{R}}%
	\global\long\def\calA{\mathcal{A}}%
    \global\long\def\calC{\mathcal{C}}%
	\global\long\def\calM{\mathcal{M}}%
	\global\long\def\calE{\mathcal{E}}%
	\global\long\def\calB{\mathcal{B}}%
	\global\long\def\calU{\mathcal{U}}%
	\global\long\def\calV{\mathcal{V}}%
	\global\long\def\calK{\mathcal{K}}%
    \global\long\def\calZ{\mathcal{Z}}%
	\global\long\def\Lmb{\Lambda}%
	\global\long\def\eps{\varepsilon}%
	\global\long\def\lmb{\lambda}%
	\global\long\def\gmm{\gamma}%
	\global\long\def\rd{\partial}%
	\global\long\def\aleq{\lesssim}%
	\global\long\def\chf{\mathbf{1}}%
	\global\long\def\td#1{\widetilde{#1}}%
	\global\long\def\sgn{\mathrm{sgn}}%
	
\subjclass[2020]{35B44 (primary), 35Q55, 37K10} 

\keywords{Calogero--Moser derivative nonlinear Schrödinger equation, continuum Calogero--Moser model, quantized blow-up, infinite hierarchy of conservation laws. }

\title[Quantized blow-up dynamics for CM--DNLS]{Quantized blow-up dynamics for Calogero--Moser derivative nonlinear Schrödinger equation}
\begin{abstract}
	We consider the Calogero–Moser derivative nonlinear Schr\"{o}dinger equation (CM-DNLS), an $L^2$-critical nonlinear Schrödinger type equation enjoying a number of numerous structures, such as nonlocal nonlinearity, self-duality, pseudo-conformal symmetry, and complete integrability. 

    In this paper, we construct smooth finite-time blow-up solutions to (CM-DNLS) that exhibit a sequence of discrete blow-up rates, so-called \emph{quantized blow-up rates}. Our strategy is a forward construction of the blow-up dynamics based on modulation analysis. Our main novelty is to utilize the \textit{nonlinear adapted derivative} suited to the \textit{Lax pair structure} and to rely on the \textit{hierarchy of conservation laws} inherent in this structure to control higher-order energies. 
    This approach replaces a repulsivity-based energy method in the bootstrap argument, which significantly simplifies the analysis compared to earlier works. Our result highlights that the integrable structure remains a powerful tool, even in the presence of blow-up solutions.

    In (CM-DNLS), one of the distinctive features is \emph{chirality}. However, our constructed solutions are not chiral, since we assume the radial (even) symmetry in the gauge transformed equation. This radial assumption simplifies the modulation analysis.
    
\end{abstract}

\author{Uihyeon Jeong}
\email{juih26@kaist.ac.kr}
\address{Department of Mathematical Sciences, Korea Advanced Institute of Science
	and Technology, 291 Daehak-ro, Yuseong-gu, Daejeon 34141, Korea}
\author{Taegyu Kim}
\email{k1216300@kias.re.kr}
\address{School of Mathematics, Korea Institute for Advanced Study, 85 Hoegiro Dongdaemun-gu, Seoul 02455, Korea}

\maketitle
\setcounter{tocdepth}{1}
\tableofcontents{}

\section{Introduction}

We consider the initial value problem 
\begin{align}\label{CMdnls}
	\begin{cases}
		i\partial_tu+\partial_{xx}u+2D_+(|u|^{2})u=0, \quad (t,x) \in \mathbb{R}\times\mathbb{R}
		\\
		u(0)=u_0
	\end{cases} \tag{CM-DNLS}
\end{align}
where $u:[-T,T]\times \mathbb{R}\to \mathbb{C}$, $D_+ = D \Pi_+$, $D=-i\partial_x$ and $\Pi_+$ is the projection to positive frequency. \eqref{CMdnls} is a newly introduced equation of the nonlinear Schrödinger type. It has attracted interest due to its diverse mathematical properties. Notably, \eqref{CMdnls} is mass-critical, exhibiting pseudo-conformal invariance, and completely integrable. This paper aims to construct \textit{smooth} finite-time blow-up solutions to \eqref{CMdnls} in the energy space with a sequence of blow-up rates, namely \emph{quantized blow-up rates}.

In \cite{AbanovBettelheimWiegmann2009FormalContinuum}, \eqref{CMdnls} is derived as a continuum limit of classical Calogero--Moser system, which is known to be completely integrable \cite{Calogero1971ClassicCalogerMoser, CalogeroMarchioro1974ClassicCalogerMoser, Moser1975ClassicCalogerMoser, OlshanetskyPerelomov1976Invent}. This model is also referred to as the continuum Calogero–Moser model (CCM) or Calogero–Moser NLS (CM-NLS). 

Gérard and Lenzmann \cite{GerardLenzmann2024CPAM} began a rigorous mathematical analysis on \eqref{CMdnls}, establishing its complete integrability on the \textit{Hardy--Sobolev} space $H_{+}^{s}(\mathbb{R})$:
\[
H_{+}^{s}(\mathbb{R})\coloneqq H^{s}(\R)\cap L_{+}^{2}(\R),\qquad L_{+}^{2}(\R)\coloneqq\{f\in L^{2}(\mathbb{R}):\text{supp}\,\widehat{f}\subset[0,\infty)\}.
\]
The positive frequency condition $u(t)\in L^2_+$ is called \textit{chirality}, and it is preserved under the \eqref{CMdnls} flow. With chirality, the Lax pair structure for \eqref{CMdnls} holds:
\begin{equation}
    \frac{d}{dt}\mathcal{L}_{\textnormal{Lax}}=[\mathcal{P}_{\textnormal{Lax}},\mathcal{L}_{\textnormal{Lax}}]\label{eq:LaxPair-evol}
\end{equation}
with the $u$-dependent operators $\mathcal{L}_{\textnormal{Lax}}$
and $\mathcal{P}_{\textnormal{Lax}}$ defined by\footnote{\eqref{eq:LaxPair} differs slightly from the version in \cite{GerardLenzmann2024CPAM}, but equals on $H_{+}^{s}(\mathbb{R})$. This formulation was presented in the introduction of \cite{KillipLaurensVisan2023CMDNLSL2plusexplicitformulaarxiv}.} 
\begin{align}
    \mathcal{L}_{\textnormal{Lax}}=-i\partial_{x}-u\Pi_{+}\overline{u},\quad\text{and}\quad\mathcal{P}_{\textnormal{Lax}}=i\partial_{xx}+2iuD_{+}\overline{u}.\label{eq:LaxPair}
\end{align}
A similar Lax pair structure for the full intermediate NLS was found in \cite{PelinovskyGrimshaw1995IntermediateDefocusingCMDNLSLaxPair} which includes a defocusing version of \eqref{CMdnls}. There is another manifestation of integrability on the Hardy space: an explicit formula \cite{KillipLaurensVisan2023CMDNLSL2plusexplicitformulaarxiv}, which was developed by Gérard and collaborators in other integrable models \cite{GerardGrellier2015ExplictFormulaCubicSzego1,Gerard2023BOequexplicitFormula,GerardPushnitski2024CMP}.
Notably, the Lax pair structure \eqref{eq:LaxPair-evol} also holds in the standard Sobolev space $H^s(\bbR)$ \cite{KimKimKwon2024arxiv}, though the explicit formula does not.

We briefly point out symmetries and conservation laws. \eqref{CMdnls} enjoys (time and space) translation and phase rotation symmetries associated to the conservation laws of energy, mass, and momentum:
\begin{gather*}
    \tilde{E}(u)=\frac{1}{2}\int_{\mathbb{R}}\left|\partial_x u-i\Pi_+(|u|^2)u\right|^2dx, 
    \\
    M(u)=\int_{\mathbb{R}}|u|^{2}dx, \quad \tilde{P}(u)=\Re\int_{\mathbb{R}}(\overline{u}Du-\frac{1}{2}|u|^{4})dx. \nonumber
\end{gather*}
In addition, it has Galilean invariance
\begin{align*}
	u(t,x)\mapsto e^{ic x-ic^2t}u(t,x-2ct), \quad c \in \mathbb{R},
\end{align*}
\textit{$L^{2}$-scaling symmetry} 
\begin{align*}
    u(t,x)\mapsto\lambda^{-\frac{1}{2}}u(\lambda^{-2}t,\lambda^{-1}x),\quad \lambda>0,
\end{align*}
and pseudo–conformal symmetry
\begin{equation}
    u(t,x)\mapsto \frac{1}{|t|^{1/2}}e^{i\frac{x^{2}}{4t}}u\left(-\frac{1}{t},\frac{x}{|t|}\right).\label{eq:pseudo-conf-transf}
\end{equation}
We refer that the symmetries mentioned above (and their associated virial identities) are the same as those of the mass-critical nonlinear Schr\"odinger equations. However, when focusing on chiral solutions, certain restrictions are required to maintain their chirality. For instance, Galilean invariance necessitates $c\geq 0$, and pseudo-conformal symmetry fails to preserve chirality.

The \emph{ground state} or \emph{soliton}, defined as the nonzero energy minimizer, is fundamental in the global behavior of solutions. In particular in \cite{GerardLenzmann2024CPAM}, it is proved that the ground state has zero energy (so it is static) and has an explicit formula:
\begin{equation*}
    \mathcal{R}(x)=\frac{\sqrt{2}}{x+i}\in H_{+}^{1}(\mathbb{R})\quad\text{with}\quad M(\mathcal{R})=2\pi\quad\text{and}\quad\widetilde{E}(\mathcal{R})=0,
\end{equation*}
We note that $\calR(x)$ is unique up to scaling, phase rotation, and translation symmetries, and it is also chiral solution to \eqref{CMdnls}. In general, any nontrivial $H^{1}(\bbR)$ traveling wave solutions (i.e., solutions of the form $u(t,x)=e^{i\omega t}\calR_{c,\omega}(x-ct)$ for some $\omega,c\in\bbR$) are represented by $\mathrm{Gal}_c \calR$ up to scaling, phase rotation, and translation symmetries \cite{GerardLenzmann2024CPAM}. 
Applying the pseudo-conformal
transform \eqref{eq:pseudo-conf-transf} to $\calR$, one can obtain
an explicit finite-time blow-up solution:
\[
S(t,x)\coloneqq\frac{1}{t^{1/2}}e^{ix^{2}/4t}\mathcal{R}\left(\frac{x}{t}\right)\in L^{2}(\bbR),\qquad\forall t>0.
\]
However, $S(t)$ not only does \emph{not} belong to a finite energy class $H^1(\bbR)$, it is also \emph{not
chiral}.

Now, we provide a short overview of prior studies on \eqref{CMdnls}. In \cite{MouraPilod2010CMDNLSLocal}, the authors proved that \eqref{CMdnls} is locally well-posed in $H^s(\bbR)$ for all $s > \frac{1}{2}$. The same local well-posedness also holds for chiral data, as shown in \cite{GerardLenzmann2024CPAM}.
The global well-posedness for sub-threshold and threshold data, i.e.,
$M(u_{0})\leq M(\mathcal{R})=2\pi$, in $H_{+}^{k}(\mathbb{R})$ for
all $k\in\mathbb{N}_{\geq1}$ was also established by \cite{GerardLenzmann2024CPAM}. Specifically, for sub-threshold data, the authors obtained \textit{a priori} bounds, i.e. 
for all $k\in\mathbb{N}_{\geq1}$, 
\begin{equation}\label{eq:subthreshold apriori bound}
    M(u_{0})<M(\mathcal{R}) \;\Rightarrow \;\sup_{t\in\R}\norm{u(t)}_{H_{+}^{k}(\R)}\lesssim_{k, M(u_{0})}\norm{u_{0}}_{H_{+}^{k}(\R)}.
\end{equation}
These bounds \eqref{eq:subthreshold apriori bound} are consequences of the hierarchy of conservation laws. As is shown, the conservation laws are \emph{coercive} only in the sub-threshold regime. Moreover, Killip, Laurens, and Vi\c{s}an \cite{KillipLaurensVisan2023CMDNLSL2plusexplicitformulaarxiv} established the local well-posedness in $L_{+}^{2}(\mathbb{R})$ for sub-threshold
data. For threshold data, the authors of \cite{GerardLenzmann2024CPAM} showed that finite time blow-up does not happen in $H^1$ using a similar argument in \cite{Merle1993Duke}.

Beyond the threshold, the dynamics has also been explored. The authors in \cite{GerardLenzmann2024CPAM} constructed $N$-soliton solutions with mass $N\cdot M(\calR)$ using the Lax pair structure, which blow up in infinite time with $\|u(t)\|_{H^{s}}\sim|t|^{2s}$ as $|t|\to\infty$ for any $s>0$. With mass close to the threshold $M(\calR)$, Hogan and Kowalski \cite{HoganKowalski2024PAA} proved, via the explicit formula, the existence of solutions that either blow up in finite time or in infinite time.  
Although \eqref{CMdnls} is completely integrable, due to above results, it had been an intriguing question whether there exist finite-time blow-up solutions.
The first finite-time blow-up solutions were constructed
by the second author, K. Kim, and Kwon \cite{KimKimKwon2024arxiv}, arising from smooth chiral data. 

On the other hand, soliton resolution was proved by the second author and Kwon \cite{KimTKwon2024arxivSolResol} in a fully general setting. They showed that any $H^{1,1}$ solutions asymptotically decomposed into a sum of modulated solitons and a radiation part for global solutions. An analogous result for finite-time blow-up solutions was also obtained. Subsequently, the single-bubble finite-time blow-up dynamics for \eqref{CMdnls} were sharply classified in \cite{JKKK2026arxiv}. More precisely, assuming a single-soliton blow-up profile, the possible blow-up rates fall into a dichotomy between the quantized regimes and an exotic regime. Thus, the quantized solutions constructed in the present work realize one branch of the later classification.
In addition, the zero dispersion limit of \eqref{CMdnls} was investigated
by Badreddine \cite{Badreddine2024SIAM}. We further refer to \cite{Badreddine2024PAA,Badreddine2024AIHPC}
for the periodic model and to \cite{Sun2024LMPsystem} for the system extension of \eqref{CMdnls}.

\subsection*{Gauge transform} \text
The equation \eqref{CMdnls} enjoys an additional structure, which is shared with the derivative nonlinear Schr\"odinger equation. We define a \textit{gauge transform} by
\begin{align*}
	v(t,x)=-\mathcal{G}(u)(t,x):=-u(t,x)e^{-\frac{i}{2}\int_{-\infty}^x|u(y)|^2dy}.
\end{align*}
Then, $v(t,x)$ satisfies the gauge transformed Calogero--Moser DNLS \eqref{CMdnls-gauged},
\begin{align}\label{CMdnls-gauged}
	\begin{cases}
		i\partial_tv+\partial_{xx}v+|D|(|v|^{2})v-\frac{1}{4}|v|^4v=0, \quad (t,x) \in \mathbb{R}\times\mathbb{R},
		\\
		v(0)=v_0,
	\end{cases}\tag{$\mathcal{G}$-CM}
\end{align}
where $|D|$ is the Fourier multiplier operator with symbol $|\xi|$. We will mainly work on \eqref{CMdnls-gauged}. Our main motivation for considering \eqref{CMdnls-gauged} is to exploit its \textit{self-dual structure}. For a detailed explanation, see Section \ref{sec:linearized dynamics}. 

Since the gauge transform is a diffeomorphism of $H^s(\bbR)$ for any $s\geq 0$, it preserves all the symmetries of \eqref{CMdnls}. Consequently, \eqref{CMdnls-gauged} also retains the conservation laws of energy, mass, and momentum:
\begin{align}
    \begin{gathered}E(v)=\frac{1}{2}\int_{\mathbb{R}}\left|\partial_{x}v+\frac{1}{2}\mathcal{H}(|v|^{2})v\right|^{2}dx,\\
    M(v)=\int_{\mathbb{R}}|v|^{2}dx,\quad P(v)=\int_{\mathbb{R}}\Im(\overline{v}\partial_{x}v)dx,
    \end{gathered}
    \label{eq:intro energy gauge}
\end{align}
where $\mathcal{H}$ is the Hilbert transform. We have chosen the minus sign in the transform $v=-\mathcal{G}(u)$ so that the static solution $\calR$ of \eqref{CMdnls} is transformed into a positive static solution to \eqref{CMdnls-gauged} 
\begin{align*}
Q(x)\coloneqq-\mathcal{G}(\mathcal{R})(x)=\frac{\sqrt{2}}{\sqrt{1+x^{2}}}\in H^{1}(\mathbb{R}),\quad M(Q)=2\pi,\quad E(Q)=0.
\end{align*}
More importantly, for \textit{any} solutions $v(t)\in H^{s}(\R)$
to \eqref{CMdnls-gauged}, the Lax pair structure for \eqref{CMdnls-gauged} also holds:
\begin{align}
    \frac{d}{dt}\widetilde{\mathbf{D}}_{v}=[-iH_{v},\widetilde{\mathbf{D}}_{v}], \label{eq:intro Lax gauge}
\end{align}
where $\widetilde{\mathbf{D}}_{v}$
and $H_{v}$ are given by
\begin{align*}
\widetilde{\mathbf{D}}_{v}=\partial_{x} +\frac{1}{2}v\mathcal{H}\overline{v},\quad 
H_v=-\partial_{xx}+\frac{1}{4}|v|^{4}-v|D|\overline{v}.
\end{align*}

Following up the finite-time blow-up result in \cite{KimKimKwon2024arxiv}, our goal is to construct other blow-up solutions with various blow-up rates. In fact, we exhibit a sequence of blow-up rates arising from smooth radial data for \eqref{CMdnls-gauged}. By applying the inverse gauge transform, we also obtain smooth finite-time blow-up solutions to \eqref{CMdnls} with the same quantized rates. 
\begin{thm}[Quantized blow-up]\label{thm:main}
For any natural number $L \ge 1$, there exists a smooth radial initial data $v_{0} \in H^{\infty}(\mathbb{R})$ such that the corresponding solution $v(t,r)$ to
\eqref{CMdnls-gauged} blows up in finite time $0<T=T(v_0)<+\infty$ as follows: there exist $\ell=\ell(v_0)\in (0,\infty)$, $\gamma^*=\gamma^*(v_0)\in \mathbb{R}/2\pi \mathbb{Z}$ and $v^*=v^*(v_0) \in H^1(\mathbb{R})$ such that
\begin{equation}
    v(t,r) - \frac{e^{i \gamma^*}}{\sqrt{\ell(T-t)^{2L}}} Q\left( \frac{r}{\ell(T-t)^{2L}}\right) \to v^* \quad \textnormal{in }L^2 \quad \textnormal{as } t\to T. \label{eq:thm strong converge}
\end{equation}
Moreover, the mass of $v_0$ can be taken arbitrarily close to $M(Q)$. Consequently, analogous smooth blow-up solutions for \eqref{CMdnls} are also constructed.
\end{thm}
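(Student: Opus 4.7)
The plan is a forward construction of blow-up via modulation analysis around the ground state $Q$, adapting the Raphaël--Rodnianski scheme (used in \cite{KimKimKwon2024arxiv} for the case $L=1$) so as to exploit the Lax-pair/self-dual structure of \eqref{CMdnls-gauged}. I would look for a solution in the modulated form
\begin{equation*}
    v(t,r) \;=\; \frac{e^{i\gmm(t)}}{\sqrt{\lmb(t)}}\,(Q_{b(t)} + \eps)\!\left(t,\tfrac{r}{\lmb(t)}\right),
\end{equation*}
where $\lmb(t)>0$, $\gmm(t)\in\bbR/2\pi\bbZ$, and $b(t)=(b_1(t),\dots,b_L(t))$ is an $L$-dimensional vector of internal modulation parameters. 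The first step is to construct, iteratively in powers of $b$, an approximate blow-up profile $Q_b$ solving the self-similar equation modulo finite-dimensional projections, up to order $L$ in $b$. The self-dual factorization of the linearized operator around $Q$ (referenced in Section \ref{sec:linearized dynamics}) makes each inversion tractable and produces a consistent expansion whose leading dynamics encodes the quantized rate $\lmb \sim (T-t)^{2L}$.

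Plugging the ansatz into \eqref{CMdnls-gauged} produces a system of ODEs for $(\lmb,\gmm,b)$ coupled with a PDE for $\eps$. Imposing orthogonality conditions on $\eps$ compatible with the self-dual factorization fixes the modulation, and the $b$-ODEs reduce at leading order to the Raphaël--Rodnianski model system supplemented by $b_{L+1}\equiv 0$. This finite-dimensional system admits trajectories converging to $0$ at the rate $\lmb(t)\sim\ell(T-t)^{2L}$, with an $L$-dimensional unstable direction that will be handled by a topological selection at the end.

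The main technical obstacle is uniform high-regularity control of the remainder $\eps$. In contrast to \cite{KimKimKwon2024arxiv}, where a delicate repulsivity-type energy method was used, I would rely on the \emph{hierarchy of conservation laws} built into the Lax pair \eqref{eq:intro Lax gauge}. For mass $M(v)$ chosen just above $M(Q)$, the subthreshold bound \eqref{eq:subthreshold apriori bound} can be upgraded, modulo the soliton manifold, to a coercivity estimate for the $k$-th invariant $E_k(v)$ controlling the adapted Sobolev norm associated to the \emph{nonlinear derivative} $\td{\D}_v$. Because $E_k$ is \emph{exactly} conserved along the flow, the high-regularity bootstrap collapses to checking that $E_k$ is $O(1)$ at initial time and to propagating a weighted local bound for $\eps$ at the base level. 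This replaces the positivity/repulsivity computations of earlier constructions and is, as stressed in the abstract, the main source of simplification.

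Finally, a standard Brouwer-type argument on the $L$-dimensional parameter space selects smooth radial initial data $v_0$ for which all orthogonality conditions and bootstrap bounds persist up to the blow-up time $T$; passing to the limit yields the strong $L^2$ convergence \eqref{eq:thm strong converge}, and applying the inverse gauge transform transfers the construction from \eqref{CMdnls-gauged} to \eqref{CMdnls}. I expect the hardest point to be the interface between the Lax-pair coercivity and the modulation framework: one must verify that the quadratic forms $E_k$ remain quantitatively controlled along the modulated flow despite the symmetry breaking introduced by the $b$-parameters and the profile $Q_b$, and that the admissible mass window can be taken arbitrarily close to $M(Q)$ while preserving positivity of the relevant coercive quadratic forms on the orthogonal complement of the soliton directions.
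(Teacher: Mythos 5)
Your high-level plan (forward construction, modulation, Brouwer, conservation laws) is the correct family of ideas, but several details are off in ways that matter, and you end up missing precisely the points that make the paper's argument work and make it short.

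First, the parameter count is wrong. In \eqref{CMdnls-gauged} the linearized operator has a two-dimensional kernel $\{iQ,\Lambda Q\}$ on even functions, and the correct modulation system carries \emph{two} families $(b_k)_{k=1}^L$ and $(\eta_k)_{k=1}^L$ -- the $\eta_k$ encode the phase-rotation/pseudo-conformal degeneracy attached to $iQ$. The ODE system \eqref{eq:intro mod equ} is a coupled $2L$-dimensional system whose linearization around the blow-up trajectory has exactly $2L-1$ unstable directions ($L$ of them are rotational instabilities, see Remark~\ref{rem:rotational instab}). Your ansatz with an $L$-dimensional $b$ and an ``$L$-dimensional unstable direction'' would set up the Brouwer argument on a ball of the wrong dimension, and would not control the phase parameter: already for $L=1$ the instability is $1$, not $0$, and for $L\ge2$ you would be short by $L-1$ directions.

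Second, your plan to ``construct, iteratively in powers of $b$, an approximate blow-up profile $Q_b$'' is the standard Merle--Rapha\"el--Rodnianski route, but it is exactly what the paper avoids and what the nonlinear-variable formalism is designed to circumvent. Because each $v_j=\widetilde{\D}_v^j v$ solves the \emph{same} conjugated equation $(\partial_t+iH_v)v_j=0$ (Proposition~\ref{prop:Lax}), the paper never builds a hierarchical $Q_b$; it gives each $w_{2k-1}$ the same trivial profile $P_{2k-1}(b_k,\eta_k)=-(ib_k+\eta_k)(-i)^{k-1}\frac{y}{2}Q$, and the compatibility with $w\approx Q+T_1$ is encoded in Lemma~\ref{lem:profile choosing}. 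A $Q_b$-style expansion in \eqref{CMdnls-gauged} would run head-on into the nonlocal Hilbert transform, which is why the paper works with decompositions that change with the ambient topology rather than with cut-off profiles.

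Third, the mechanism by which conservation laws enter is misstated. You propose to ``upgrade the subthreshold bound \eqref{eq:subthreshold apriori bound}, modulo the soliton manifold, to a coercivity estimate for $E_k$,'' but the conservation laws are genuinely non-coercive above threshold, and the paper never proves or uses any such coercivity. What is used is the algebraic observation that $I_{2k}(v)=(-1)^k\|\widetilde{\D}_v^k v\|_{L^2}^2$, so the conserved quantity is literally an $L^2$ norm of a nonlinear variable, giving $\|w_k(s)\|_{L^2}\lesssim \lambda(s)^k$ for free. This is then lifted to $\dot H$-regularity one half step at a time via the factorization $\widetilde{\D}_Q=B_Q^*\partial_x B_Q$ together with $B_QB_Q^*=I$ (Proposition~\ref{prop:AQ-BQ definition}), which converts the bound on $w_{j+1}=\widetilde{\D}_w w_j$ into $\|A_Qw_j\|_{L^2}\lesssim\lambda^{j+1}$, and finally only the \emph{first-order} coercivity of $A_Q$ on odd functions (Lemma~\ref{lem:coercivity for linear}(3)) is invoked -- no coercivity of $\widetilde{\D}_Q^{k-1}L_Q$ or of a higher-order energy is required. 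Without this identity your bootstrap does not close: the $L^2$ conservation gives the $\dot H^{2k-1}$-level bound but not the $\dot H^{2k}$-level bound $\|\eps_{2k-1}\|_{\dot\calH^1}\lesssim\lambda^{2k}$ on which the modulation estimates hinge.

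Finally, a smaller but genuine point: the paper still uses a form of repulsivity, namely \eqref{eq:BQBQstar equal I} and \eqref{eq:DQ BQ decomp}, to pass from the conserved $L^2$ norms to $A_Q$-energies; it is not the case that repulsivity is dropped entirely, only that a Morawetz-type monotonicity argument is replaced by this algebraic step.
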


\subsection*{Comments on Theorem \ref{thm:main}}

\ 

\emph{1. Method and novelties.}
Our approach adopts the forward construction using modulation analysis, following the robust argument established in previous works \cite{RodnianskiSterbenz2010, RaphaelRodnianski2012, MerleRaphaelRodnianski2013Invention, RaphaelSchweyer2013CPAMHeat, RaphaelSchweyer2014AnalPDEHeatQuantized, MerleRaphaelRodnianski2015CambJMath}.
On a more technical level, our analysis is motivated by studies on the self-dual Chern–Simons–Schrödinger equation (CSS) \cite{KimKwon2020blowup, KimKwonOh2020blowup, Kim2022CSSrigidityArxiv}, particularly in the use of nonlinearly conjugated variables and conjugation identities, which were also used in the earlier work \cite{KimKimKwon2024arxiv}. Additionally, we employ the decomposition of solutions depending on topologies, as introduced in \cite{KimKimKwon2024arxiv}.

Our main novelty lies in substantial simplifying the standard higher-order energy method by employing \emph{the hierarchy of conservation laws arising from integrability}.  
As explained earlier in \eqref{eq:subthreshold apriori bound}, the coercivity of conservation laws fails for the super-threshold data. More precisely, $\norm{v(t)}_{H^{k}}$ is not controlled for $M(v) > M(Q)$. For higher-order derivatives of $v(t)$, we use an intrinsic nonlinear structure. Defining $v_k\coloneq\td \bfD_v^k v$, the conservation laws are nothing but just $\norm{v_k}_{L^2}$, and so we have a global control of $v_k$ for free. Remarkably, $v_k$ solves the common equation \eqref{eq:intro wk equ}. This allows us to take a canonical decomposition. See more details in Section \ref{sec:linearized dynamics}.

A further advantage of this formulation is that it avoids the need for repulsivity or monotonicity information. Indeed, in view of the structure of one of the Lax pairs around $Q$, see \eqref{eq:DQ BQ decomp}, we are able to exploit the coercivity of the conservation laws in \eqref{eq:boots energy AQ}. See also Remark~\ref{rem:key estimate}.

\emph{2. Quantized blow-up rates.} The study of quantized (type-II) blow-up dynamics first originated in the context of the energy-supercritical nonlinear heat equations (NLH), from the discovery of formal mechanisms \cite{HerreroVelazquez1992, HerreroVelazquez1994CRASPSI, FilippasHerreroVelazquez2000} to classification results \cite{Mizoguchi2004AdvDE, Mizoguchi2007Math.Ann.,Mizoguchi2011TranAMS}. These results were extended to critical parabolic equations through modulation analysis \cite{RaphaelSchweyer2014AnalPDEHeatQuantized} by Rapha\"{e}l and Schweyer; see also the further developments in \cite{HadzicRaphael2019JEMS, CollotGhoulMasmoudiNguyen2022CPAM}, as well as related results using inner-outer gluing methods \cite{delPinoMussoWeiZhang2020HeatQuantized, Junichi2020AIHPC}. The modulation analysis has also been effectively applied to dispersive settings. Merle, Rapha\"{e}l, and Rodnianski \cite{MerleRaphaelRodnianski2015CambJMath} first established quantized blow-up dynamics for the energy-supercritical NLS, and subsequent works extended these results to other energy-supercritical dispersive equations \cite{Collot2018MemAmer, GhoulIbrahimNguyen2018JDE, MerleRaphaelRodnianskiSzeftel2022Invention}. Recently, the first author constructed quantized blow-up dynamics for the energy-critical corotational wave maps \cite{Jeong2024arxiv}. At the level of construction, our result provides the second such example within the class of critical dispersive equations. Subsequent work \cite{JKKK2026arxiv} further shows that, in the single-bubble regime, the quantized rates constructed here are part of a sharp classification of finite-time blow-up dynamics for \eqref{CMdnls}.

Building on previous studies, the authors of \cite{KimKimKwon2024arxiv} observed that \eqref{CMdnls-gauged}, with smooth initial data, exhibits formal similarities to the 3D energy-critical NLS and NLH. Notably, our result aligns precisely with those for the 3D NLH \cite{FilippasHerreroVelazquez2000, delPinoMussoWeiZhang2020HeatQuantized}.

For non-smooth data (e.g., $u_{0}\in H^{s}(\mathbb{R})$ in a limited range of $s$), we conjecture the existence of a continuum of admissible blow-up rates, supported by similar observations in the 3D energy-critical NLS \cite{OrtolevaPerelman2013, Schmid2023arXiv}; see also \cite{KriegerSchlagTataru2008Invent, KriegerSchlagTataru2009Duke, Perelman2014CMP, JendrejLawrieRodriguez2022ASENS, KimKimKwon2024arxiv}.

\emph{3. Chirality and radial assumption.} 
In \eqref{CMdnls}, one of the distinctive features is chirality. However, our constructed solution is not chiral. Since these dynamics for \eqref{CMdnls} are obtained by applying the inverse gauge transform to \eqref{eq:thm strong converge} in the class of radial (even) functions, they cannot be extended to chiral data ($H^\infty_+(\mathbb{R})$). This radial assumption is required to establish the coercivity for a linearized operator. See Remark \ref{rem:even assume} for more details. Nevertheless, the construction of quantized chiral blow-up solutions can be regarded as a technical issue. It is plausible that such solutions can be constructed using a similar approach to that in \cite{KimKimKwon2024arxiv}, where chiral blow-up solutions were obtained.  

On the other hand, the chiral blow-up solutions can be expressed by the explicit formula in \cite{KillipLaurensVisan2023CMDNLSL2plusexplicitformulaarxiv}, so we wonder how the blow-up information is encoded in the formula. 
We refer to \cite{HoganKowalski2024PAA} for results that utilize this formula, showing its applicability beyond the threshold mass.


\emph{4. Finite codimensional stability and rotational instability.}  Our blow-up dynamics exhibits codimension $2L-1$ stability, which arises from the unstable directions involved in standard Brouwer type fixed point arguments. To explore the nearby dynamics around this quantized blow-up, we approximate the blow-up dynamics via its modulation laws \eqref{eq:intro mod equ}. In \cite{KimKimKwon2024arxiv}, which corresponds to the case when $L=1$, the authors predicted that a phenomenon called \emph{rotational instability} would occur in the direction where blow-up does not happen. This phenomenon was first observed in previous studies on other models \cite{KimKwon2019,Kim2022CSSrigidityArxiv,vandenBergWilliams2013,MerleRaphaelRodnianski2013Invention} and later in \eqref{CMdnls} \cite{KimKimKwon2024arxiv}.
The observed behavior describes that the spatial scale contracts to a small scale, then the phase rotation parameter abruptly changes in a short time scale by the fixed amount
of angle, and finally the spatial scale starts
to grow (the spreading) as a backward evolution of the blow-up.

We can observe that these rotational instabilities emerge in $L$ out of the $2L-1$ unstable directions. Moreover, the remaining $L-1$ directions correspond to instabilities that also appear in other quantized blow-up solutions such as \cite{RaphaelSchweyer2014AnalPDEHeatQuantized, Jeong2024arxiv}, and their mechanisms can be explicitly identified. For the detailed mechanism of the instabilities, see Remark \ref{rem:rotational instab}.


\emph{5. Further regularity of asymptotic profile.}
By using the hierarchy of conservation laws, we obtain scale bounds on the higher-order energies. This allows us to expect $v^*\in H^{2L-\frac{1}{2}-}$ from the observations in \cite{RaphaelSchweyer2013CPAMHeat, RaphaelSchweyer2014AnalPDEHeatQuantized}. However, when we apply the method using the nonlinear variables and the Lax structure, we do not impose the coercivity for higher-order linearized operators, which hinders us from achieving further regularity. See Remark \ref{rem:asym profile} for details.
We also believe that the regularity of the asymptotic profile associated with the quantized blow-up solutions to \eqref{CMdnls} can be obtained up to $ H^{2L-\frac{1}{2}-}$.

\subsection*{Strategy of the proof}
We use the notation introduced in Section \ref{sec:notation}. Our proof basically follows the scheme of previous works \cite{RodnianskiSterbenz2010, RaphaelRodnianski2012, MerleRaphaelRodnianski2013Invention, RaphaelSchweyer2013CPAMHeat, RaphaelSchweyer2014AnalPDEHeatQuantized, MerleRaphaelRodnianski2015CambJMath}, but differs in three main aspects.

First, we use nonlinear adapted derivatives (nonlinear variables), which significantly simplify the construction of the modified profile compared to linear adapted derivatives. This approach was originally introduced in the context of (CSS) \cite{KimKwonOh2020blowup, Kim2022CSSrigidityArxiv} and was also used in prior results for \eqref{CMdnls} \cite{KimKimKwon2024arxiv}.
In our case, we introduce more suitable nonlinear variables to take advantage of integrability. These variables preserve a nonlinear algebraic structure, allowing information about all higher-order nonlinear variables to be derived from the first nonlinear variable. Moreover, this enables the scheme to work without requiring coercivity for higher-order linearized operators.

Second, we adopt a method of changing decompositions \emph{depending on topologies} (or, on norms), as introduced in \cite{KimKimKwon2024arxiv}. This approach was designed to deal with issues arising from the slow spatial decay of the soliton $Q$, which prevents decompositions from being admissible. Typically, such issues are resolved by adopting artificial cut-offs in the profiles. However, this introduces difficulties in handling the non-local structure. Instead of relying on cut-offs, we perform decompositions only in the topologies where the profiles and decompositions are well-defined.

Third, we obtain estimates for the higher-order energies using the \emph{hierarchy of conservation laws}. This is our key novelty, which simplifies the bootstrap argument by obviating the need for higher-order energy estimates in the assumptions. 

Despite these refinements, one difficulty remains in the proof. Since the modulation parameters are defined through nonlinear variables, they are not directly read off from the initial data. In particular, although the hierarchy of conservation laws later propagates the higher-order scaling bounds, it does not by itself relate the initial data to the initial modulation parameters. To overcome this, we prove the approximation \eqref{eq:intro init approx} by a tail computation for the initial data, comparing the higher-order nonlinear variables at $s=s_0$ with the corresponding linearized quantities. The difficulty is that the linearized operator contains the Hilbert transform $\calH$, so that the inverse operators defined through the Green's function are not straightforwardly well-defined.

Now, we provide a brief sketch of the proof. For fixed $L\geq 1$, we first choose the profiles for the nonlinear variables. \eqref{CMdnls-gauged} can be rewritten as
\begin{align}
    (\partial_t+iH_v)v=0. \label{eq:G-CM rewrite}
\end{align}
We can derive the equations for the nonlinear variables 
\begin{equation*}
    \td\bfD_v^jv\coloneqq v_j
\end{equation*}
by using the Lax pair structure \eqref{eq:intro Lax gauge}.
For $1\leq j\leq 2L$, by conjugating $\td\bfD_v^j$ to \eqref{eq:G-CM rewrite} via \eqref{eq:intro Lax gauge}, we derive
\begin{align}
    (\partial_t+iH_v)v_j=0. \label{eq:intro wk equ}
\end{align}
This simple structure, emerging from integrability, ensures that the linearized dynamics of $v_j$ around $Q$ behaves the same for all $j\geq 1$. This allows us to choose the profiles for $v_j$ to be the same as those for $v_1$. The profile for $v_1$ has already established in the previous work \cite{KimKimKwon2024arxiv}, namely $-(ib+\eta)\frac{y}{2}Q + (i\nu+\mu)\frac{1}{2}Q$. However, our symmetry restriction implies that $v_1$ is odd, the profile for $v_1$ is given by $P_1=-(ib+\eta)\frac{y}{2}Q$. 

Under this observation, we derive the formal system of modulation parameters. We renormalize the flow as
\begin{align*}
	s(t)=s_0+\int_0^t\frac{1}{\lambda(\tau)^{2}}d\tau,\quad y=\frac{x}{\lambda},\quad w(s,y)=\lambda^{\frac{1}{2}}e^{-i\gamma}v(t,\lambda y)|_{t=t(s)},
\end{align*}
and we also use nonlinear variables $w_{j}$ defined by 
\begin{align*}
	w_{j}\coloneqq\td\bfD_{w}^jw=\lambda^{\frac{2j+1}{2}} e^{-i\gamma}(\td\bfD_{v}^jv)(t,\lambda y)|_{t=t(s)}.
\end{align*}
In the radial setting, we will use dynamics of $w$ and $w_{2k-1}$ for each $1\le k \le L$. See remark \ref{rem:why only 2k-1}. We assume
\begin{align*}
    w(s)\approx Q+T_1(b_1,\eta_1),\quad w_{2k-1}(s)\approx P_{2k-1}(b_k,\eta_k),
\end{align*}
where the profiles $T_1$ and $P_{2k-1}$ are defined by
\begin{equation*}
    T_1(b_1,\eta_1)\coloneqq-(ib_1+\eta_1) \frac{1+y^2}{4}Q, \, 
    P_{2k-1}(b_k, \eta_k)\coloneqq-(ib_k+\eta_k) \cdot (-i)^{k-1}\cdot \frac{y}{2}Q.
\end{equation*}
Here, the choice of $T_1$ is motivated by the generalized kernel of the linearized operator of \eqref{eq:G-CM rewrite} around $Q$. Substituting $Q+T_1$ into $w$ and $P_{2k-1}$ into $w_{2k-1}$ in their renormalized equations (\eqref{eq:w equ} and \eqref{eq:w2k-1 equ}), we derive the dynamics of the modulation parameters:
\begin{align}
    \begin{gathered}
        \frac{\lambda_{s}}{\lambda}+b_1=0,\quad\gamma_{s}-\frac{\eta_1}{2}=0, 
        \\
        \begin{aligned}
            (b_k)_s = b_{k+1}-(2k-\tfrac{1}{2})b_1b_{k}-\tfrac{1}{2}\eta_1\eta_k, \quad b_{L+1}\coloneqq0,
            \\
            (\eta_k)_s=\eta_{k+1}-(2k-\tfrac{1}{2})b_1\eta_{k}+\tfrac{1}{2}\eta_1b_k, \quad \eta_{L+1}\coloneqq 0,
        \end{aligned}\quad 1\le k \le L.    
    \end{gathered}
	\label{eq:intro mod equ}
\end{align}
Then, we can find a special solution to \eqref{eq:intro mod equ} satisfying
\begin{align*}
    b_1(s)= \frac{2L}{4L-1}\cdot \frac{1}{s}, \quad \eta_k\equiv 0 \quad \text{for } 1\leq k\leq L.
\end{align*}
From $-\frac{\lambda_s}{\lambda}=b_1$ and $\frac{ds}{dt}=\frac{1}{\lambda(t)^{2}}$, we derive that $\lambda(t)\sim (T-t)^{2L}$, leading to the quantized blow-up dynamics. Linearizing the ODE system \eqref{eq:intro mod equ} around this special solution reveals the presence of $2L-1$ unstable directions. Thus, we use Brouwer's fixed point theorem to control these directions in the neighborhood of this special solution to the ODE system. 

To justify this argument, we use a bootstrap argument, which requires the description of an initial data set in which the initial values of the modulation parameters are close to those of the special solution. Since the modulation parameters are defined through nonlinear variables, they are not directly determined by the initial data. The key point is therefore to compare the nonlinear variables at $s=s_0$ and the corresponding linearized quantities, namely
\begin{equation*}
    (\td\bfD_Q^{j-1}L_Q\wt\eps(s_0),\iota yQ\chi)_r
    \approx 
    (w_j(s_0),\iota yQ\chi)_r \quad \textnormal{for } \iota=1,i,
\end{equation*}
where $\td\bfD_Q^{j-1}L_Q$ is the linearized operator of $v_j=\td\bfD_v^jv$ around $Q$. 
This is precisely the content of Lemma~\ref{lem:init approx}. In Section~\ref{sec:trapped regime}, the proof of Theorem~\ref{thm:main} is reduced to this lemma, while Section~\ref{sec:tail compute} is devoted to its proof by a tail computation for the initial data. This comparison makes it possible to relate the initial data to the initial modulation parameters, and hence to choose initial data for which these parameters are close to those of the special solution. In this step, we also use the generalized kernel elements together with the algebraic structure of $\td\bfD_Q^{j-1}L_Q$ to control the left-hand side above; see \eqref{eq:T_k} and Lemma~\ref{lem:profile choosing}.

To close the bootstrap argument, it is necessary to ensure sufficient smallness of the higher-order energies and to propagate this smallness. Notably, since the profile terms $P_{2k-1}(b_k,\eta_k)$ originate from $w_{2k-1}$, we need estimates for $w_{2k-1}$ and its radiation part. Our key point is the hierarchy of the conservation laws derived from the Lax pair structure \eqref{eq:intro Lax gauge}, i.e., $\|v_j\|_{L^2}$ is conserved. From this, the following estimates for higher-order energies can be obtained without bootstrap bounds:
\begin{align}\label{eq:intro init approx}
    \|w_j(s)\|_{L^2}\lesssim_{v_0,j}\lambda(s)^j \quad\textnormal{for all}\quad s\geq s_0.
\end{align}
This directly provides the $\dot H^{2k-1}$ level estimate, $\|w_{2k-1}\|_{L^2} \lesssim \lambda^{2k-1}$. For the $\dot H^{2k}$ level, we derive $\|A_Qw_{2k-1}\|_{L^2}\lesssim \lambda^{2k}$ where $A_Q$ satisfies $H_Q=A_Q^* A_Q$, which is consistent with the energy estimate in \cite{KimKimKwon2024arxiv}. To obtain this, we use the repulsivity properties \eqref{eq:BQBQstar equal I} and \eqref{eq:DQ BQ decomp}.
We observe that
\begin{align*}
    \|\td\bfD_Qw_{2k-1}\|_{L^2}\approx \|\td\bfD_ww_{2k-1}\|_{L^2}=\|w_{2k}\|_{L^2}
    \lesssim \lambda^{2k},
\end{align*}
where this follows from $w\approx Q$. By using \eqref{eq:BQBQstar equal I} and \eqref{eq:DQ BQ decomp}, we deduce
\begin{align*}
    \|A_Qw_{2k-1}\|_{L^2}=\|B_Q^*A_Qw_{2k-1}\|_{L^2}=\|\td\bfD_Qw_{2k-1}\|_{L^2}\lesssim \lambda^{2k}.
\end{align*}
Therefore, by employing the coercivity of $A_Q$, we obtain the estimate at the $\dot H^{2k}$ level $\|\eps_{2k-1}\|_{\dot \calH^1}\lesssim \lambda^{2k}$, where $\eps_{2k-1}=w_{2k-1}-P_{2k-1}(b_k,\eta_k)$ is the radiation part of $w_{2k-1}$. This, in turn, allows us to establish the modulation estimates, thereby closing the bootstrap. 

Throughout the analysis, we adopt the method of changing decompositions depending on topologies (or, on norms), as introduced in \cite{KimKimKwon2024arxiv}. Due to the slow decay of the soliton $Q$, the profiles $P_{2k-1}$ are not well-defined in $L^2$. Instead of introducing cut-offs in profiles, we decompose solutions only in the topologies where decompositions are naturally defined.

\begin{rem}\label{rem:even assume}
    Without the radial assumption, the coercivity of $A_Q$ fails to hold, making it tricky to establish $\|\eps_{2k-1}\|_{\dot \calH^1}\lesssim \lambda^{2k}$. Nonetheless, we believe that this issue can be addressed using a similar approach to that in \cite{KimKimKwon2024arxiv}. This difficulty arises because the kernel of $A_Q$ (over $\bbC$) contains not only $yQ$ but also $Q$. Moreover, the profiles corresponding to $iQ$ and $Q$ cannot be found in $w_{2k-1}$. Instead, they appear only in the form of $iyQ$ and $yQ$ in $w_{2k-2}$. This stems from the non-locality of the Hilbert transform $\calH$, which causes the first-order operator $A_Q$ (or $\td \bfD_Q$) to have two kernel elements over $\bbC$.
\end{rem}





\subsection*{Organization of the paper}
In Section~\ref{sec:notation}, we introduce the notation
and preliminaries for our analysis, including the hierarchy of conservation laws. In Section~\ref{sec:profile}, we review the linear theory of the linearized operators of \eqref{CMdnls-gauged} around the soliton. In addition, we define modified profiles and derive the formal ODE system for the modulation parameters. In Section~\ref{sec:trapped regime}, we set up the trapped regime by describing the initial data, formulating the bootstrap assumptions, and establishing the higher-order energy and modulation estimates. The proof of Theorem~\ref{thm:main} is then reduced to an approximation lemma for the initial data, Lemma~\ref{lem:init approx}. Finally, in Section~\ref{sec:tail compute}, we prove this lemma by a tail computation for the initial data.

\subsection*{Acknowledgments}
The authors appreciate Kihyun Kim and Soonsik Kwon for helpful discussions and suggestions for this work. U.~Jeong was partially supported by the National Research Foundation of Korea (RS-2019-NR040050, RS-2022-NR069873, RS-2024-00333393). T.~Kim was partially supported by the National Research Foundation of Korea (RS-2019-NR040050, RS-2022-NR069873, RS-2024-00333393) and by a KIAS Individual Grant (MG105201) at Korea Institute for Advanced Study.


\section{Notation and preliminaries}\label{sec:notation}
In this section, we gather the notations and commonly used formulas.
For quantities $A\in\bbC$ and $B\geq0$, we write $A\lesssim B$
if $|A|\leq CB$ holds for some implicit constant $C$. For $A,B\geq0$,
we write $A\sim B$ if $A\lesssim B$ and $B\lesssim A$. If $C$
is allowed to depend on some parameters, then we write them as subscripts
of $\lesssim,\sim,\gtrsim$ to indicate the dependence.

We write $\langle x\rangle\coloneqq(1+x^{2})^{\frac{1}{2}}$. We define
the smooth even cut-off $\chi_{R}$ by $\chi_{R}(x)=\chi(R^{-1}x)$,
$\chi(x)=1$ for $|x|\leq1$, and $\chi(x)=0$ for $|x|\geq2$. We
also denote the sharp cut-off on a set $A$ by $\mathbf{1}_{A}$.
Denote by $\delta_{jk}$ the Kronecker-Delta symbol, i.e., $\delta_{jk}=1$
if $j=k$ and $\delta_{jk}=0$ if $j\neq k$. 
We denote by some functions $\calK_j$ for $j=1,2,3,4$ and $\mathring \calK_i$ for $i=1,2$,
\begin{equation}
    \begin{gathered}
        \calK_1=\Lambda Q, \quad \calK_2=iQ, \quad \calK_3=i(1+y^2)Q, \quad \calK_4=(1+y^2)Q,
        \\
        \mathring \calK_1=iyQ, \qquad \mathring \calK_2=yQ.
    \end{gathered}\label{eq:kernel element}
\end{equation}
For any $x\coloneqq(x_1,\cdots,x_d)\in\bbR^d$, we set $|x|^2=x_1^2+\cdots x_d^2$ and
\begin{align*}
    \calB^d_{\epsilon}(x)\coloneqq\{y\in\bbR^d: |x-y|\leq \epsilon\},\quad \partial\calB^d_{\epsilon}(x)\coloneqq\{y\in\bbR^d: |x-y|= \epsilon\}.
\end{align*}
In particular, we denote by $\calB^d\coloneqq  \calB^d_{1}(0)$, $\partial\calB^d\coloneqq  \partial\calB^d_{1}(0)$, and $a\calB^d\coloneqq  \calB^d_{a}(0)$ for real $a>0$.

Denote by $L^{p}(\mathbb{R})$ and $H^{s}(\bbR)$ the standard $L^{p}$
and Sobolev spaces on $\bbR$. As we work on $\R$, we often omit
$\R$ when there is no confusion. Let $X_e$ and $X_o$ denote the even and odd projections of the function space $X$, respectively.

The Fourier transform (on $\bbR$) is denoted by 
\begin{align*}
	\mathcal{F}(f)(\xi)=\widehat{f}(\xi)\coloneqq\int_{\mathbb{R}}f(x)e^{-ix\xi}dx.
\end{align*}
The inverse Fourier transform is then given by $\mathcal{F}^{-1}(f)(x)\coloneqq\tfrac{1}{2\pi}\int_{\R}\wt f(\xi)e^{ix\xi}d\xi$.
We denote $|D|$ by the Fourier multiplier with symbol $|\xi|$, that
is, $|D|\coloneqq\mathcal{F}^{-1}|\xi|\mathcal{F}$. We denote by
$\mathcal{H}$ the Hilbert transform: 
\begin{align}
	\mathcal{H}f\coloneqq\left(\frac{1}{\pi}\text{p.v.}\frac{1}{x}\right)*f=\mathcal{F}^{-1}(-i\text{sgn}(\xi))\mathcal{F}f.\label{eq:HilbertFourierFormula}
\end{align}
In view of \eqref{eq:HilbertFourierFormula}, we have $\partial_{x}\mathcal{H}f=|D|f=\mathcal{H}\partial_{x}f$
for $f\in H^{1}$. 

We mostly use the \emph{real inner product} defined by 
\begin{align*}
	(f,g)_{r}\coloneqq\Re\int_{\R}f\overline{g}dx.
\end{align*}
We also use the modulated form of a function $f$ by parameters $\lmb$,
$\gmm$, and $x$: 
\begin{equation*}
	[f]_{\la,\ga,x}\coloneqq\frac{e^{i\ga}}{\la^{1/2}}f\lr{\frac{\cdot-x}{\la}}.
\end{equation*}
We denote by $\Lambda_{s}$ the $\dot{H}^{s}$-scaling generator in
$\R$ as 
\begin{align*}
	\Lambda_{s}f  \coloneqq\frac{d}{d\lambda}\bigg|_{\lambda=1}\lambda^{\frac{1}{2}-s}f(\lambda\cdot)=\left(\frac{1}{2}-s+x\partial_{x}\right)f,\quad 
	  \Lmb\coloneqq\Lmb_{0}.
\end{align*}
We use the adapted Sobolev spaces $\dot{\mathcal{H}}^{k}$ for $k\in \bbN$ whose norm is defined by 
\begin{align*}
	\|f\|_{\dot{\mathcal{H}}^{k}}^{2}  \coloneqq \sum_{j=0}^{k}\left\Vert \langle x\rangle^{-j}\partial_{x}^{k-j}f\right\Vert_{L^{2}}^{2}.
\end{align*}
Note that $\dot{\mathcal{H}}^{k}\hookrightarrow\dot{H}^{k}$ but $\dot{\mathcal{H}}^{k}\cap L^{2}=H^{k}$.
We also note that $\dot{\mathcal{H}}^{k}$ is compactly embedded in $H_{\textnormal{loc}}^{k-1}$.

We denote by some linear operators (at $v$ or $Q$)
\begin{align*}
    \begin{gathered}
        \mathbf{D}_{v}f  \coloneqq\partial_{x}f+\frac{1}{2}\mathcal{H}(|v|^{2})f,\quad
        \widetilde{\mathbf{D}}_{v}f  \coloneqq\partial_{x}f+\frac{1}{2}v\mathcal{H}(\overline{v}f)
        \\
        L_{v}f \coloneqq\partial_{x}f+\frac{1}{2}\mathcal{H}(|v|^{2})f+v\mathcal{H}(\text{Re}(\overline{v}f)), \; L_{v}^{*}f  \coloneqq-\partial_{x}f+\frac{1}{2}\mathcal{H}(|v|^{2})f-v\mathcal{H}(\text{Re}(\overline{v}f)).
    \end{gathered}
\end{align*}
Here, $L_{v}^{*}$ is the $L^{2}$-adjoint operator of $L_{v}$ with respect
to $(\cdot,\cdot)_{r}$. We denote by an expression of nonlinear part
\begin{align*}
    N_{v}(\eps) & \coloneqq\eps\mathcal{H}(\Re(\overline{v}\eps))+\tfrac{1}{2}(v+\eps)\mathcal{H}(|\eps|^{2}).
\end{align*}
A second order operator $H_{v}$ is given by 
\begin{align*}
    H_{v}f\coloneqq\left(-\partial_{xx}+\frac{1}{4}|v|^{4}-v|D|\overline{v}\right)f.
\end{align*}
Another linear operators $A_{Q}, B_Q$, and their adjoint operators $A_{Q}^{*},B_{Q}^{*}$ are defined by 
\begin{align*}
    \begin{gathered}
        A_{Q}f\coloneqq\partial_{x}(x-\mathcal{H})\langle x\rangle^{-1}f,\quad 
        A_{Q}^{*}f\coloneqq-\langle x\rangle^{-1}(x+\mathcal{H})\partial_{x}f
        \\
        B_{Q}f\coloneqq(x-\mathcal{H})\langle x\rangle^{-1}f,\quad 
        B_{Q}^{*}f\coloneqq\langle x\rangle^{-1}(x+\mathcal{H})f.
    \end{gathered}
\end{align*}
We note that $A_{Q}=\partial_{x}B_{Q}$ and $A_{Q}^{*}=-B_{Q}^{*}\partial_{x}$.

We introduce some algebraic identities with respect to $\calH$, 
\begin{align}
    \mathcal{H}\left(\frac{1}{1+x^2}\right)=\frac{x}{1+x^2}, \quad
    \textnormal{or} \quad
    \calH(Q^2)=xQ^2, \label{eq:algebraic 1}
\end{align}
and we also have
\begin{align}
    \mathcal{H}\left(\frac{2x^{2}}{(1+x^2)^{2}}\right)=\frac{x^{3}-x}{(1+x^2)^{2}},
    \quad
    \mathcal{H}\left(\frac{2x^{3}}{(1+x^2)^{2}}\right)=-\frac{3x^{2}+1}{(1+x^2)^{2}}. \label{eq:algebraic 2}
\end{align}

\begin{lem}[Formulas for Hilbert transform]
\label{LemmaHilbertUsefulEquation} We have the following: 
\end{lem}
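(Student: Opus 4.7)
The plan is to reduce each Hilbert-transform identity to an elementary calculation on simple rational building blocks with poles at $x = \pm i$, exploiting the fact that $\calH$ diagonalizes such functions.

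The main tool is the characterization of $\calH$ as the Fourier multiplier $-i\,\mathrm{sgn}(\xi)$, combined with a residue/Paley–Wiener computation: for every integer $n \ge 1$, the function $(x - i)^{-n}$ extends holomorphically and decays in the upper half-plane, so a contour shift identifies its Fourier transform as supported in $\{\xi < 0\}$, yielding $\calH[(x-i)^{-n}] = i\,(x-i)^{-n}$; symmetrically, $\calH[(x+i)^{-n}] = -i\,(x+i)^{-n}$. Any target expression of the form $P(x)/(1+x^2)^k$ with $P$ a polynomial admits a partial-fraction decomposition into these building blocks, and one then applies $\calH$ termwise and recollects. This already recovers the displayed identities \eqref{eq:algebraic 1}--\eqref{eq:algebraic 2}: for instance, writing $1/(1+x^2) = \tfrac{1}{2i}\bigl[(x-i)^{-1} - (x+i)^{-1}\bigr]$ one obtains $\calH(1/(1+x^2)) = \tfrac{1}{2}\bigl[(x-i)^{-1} + (x+i)^{-1}\bigr] = x/(1+x^2)$, i.e.\ $\calH(Q^2) = xQ^2$.

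To generate the remaining identities from the base cases, I would lean on two algebraic rules. First, $\calH$ commutes with $\partial_x$, so differentiating a known identity immediately produces another; for example, differentiating $\calH(Q^2) = xQ^2$ gives $\calH(xQ^4) = x^2Q^4 - Q^2$, which after scaling is $\calH\bigl(2x/(1+x^2)^2\bigr) = (x^2 - 1)/(1+x^2)^2$. Second, multiplication by $x$ carries the commutator correction $[\calH, x]f = -\pi^{-1}\int_{\bbR} f$, obtained by differentiating the multiplier $-i\,\mathrm{sgn}(\xi)$ and collecting the Dirac mass at $\xi = 0$. Multiplying the previous display by $x$ (the integrand being odd, the commutator vanishes) recovers the first line of \eqref{eq:algebraic 2}; multiplying once more, where the integral is now $\pi$, produces the correction $-1$ that converts $x(x^3-x)/(1+x^2)^2$ into $-(3x^2+1)/(1+x^2)^2$, matching the second line of \eqref{eq:algebraic 2}. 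Additional identities involving $\calH^2 = -I$ or products of spectrally separated factors in $L^2_+$ and $L^2_-$ follow from the same projection algebra.

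The only delicate bookkeeping is tracking the commutator term $[\calH, x]$ each time an odd power of $x$ is introduced, since it is nonzero precisely when the integrand has nonzero mean. I would organize the proof as a short table of base identities (the explicit $\calH[(x\mp i)^{-n}]$) followed by a sequence of derivations using $[\partial_x, \calH] = 0$ and the $[\calH, x]$ rule; each claimed formula then reduces to a finite algebraic manipulation verifiable in a few lines. No step presents a genuine obstacle, provided the partial-fraction decomposition and the boundary commutator are handled carefully.
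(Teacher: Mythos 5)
The lemma consists of the three items in the ensuing enumerate: the Tricomi product identity, the commutator formula $[x,\calH]f=\pi^{-1}\int_{\bbR}f$, and its differentiated form. Your proposal is instead organized around verifying the displayed rational identities \eqref{eq:algebraic 1}--\eqref{eq:algebraic 2}, which appear in the text \emph{before} the lemma and are not part of it; the $(x\mp i)^{-n}$ partial-fraction machinery and the Paley--Wiener contour argument are apt for those explicit evaluations but do not bear on the abstract identities you were actually asked to establish. In your write-up the commutator rule enters as a \emph{tool}, sketched in one line (differentiate the multiplier $-i\,\mathrm{sgn}(\xi)$ and collect the Dirac mass at $\xi=0$), rather than as one of the objects to be proved; that sketch is nonetheless correct and consistent with item (2), since $[\calH,x]=-[x,\calH]$. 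Item (3) then follows at once because $[x,\calH]f$ is a constant, and $\int_{\bbR}\partial_x f=0$ for $f\in H^1$, but you never state this.

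The genuine gap is item (1), the Tricomi identity $fg=\calH f\cdot\calH g-\calH(f\cdot\calH g+\calH f\cdot g)$, which you only wave at via "products of spectrally separated factors in $L^2_+$ and $L^2_-$ follow from the same projection algebra." The underlying idea is right: split $f=f_++f_-$, $g=g_++g_-$ with $\Pi_\pm=\tfrac12(1\pm i\calH)$, use $\calH f_\pm=\mp i f_\pm$, and the key fact that $f_+g_+$ stays positive-frequency and $f_-g_-$ negative-frequency --- this is exactly where the hypothesis $f,g\in H^{1/2+}$ is needed, to make the pointwise product well defined and to keep the supports of the Fourier transforms under control. But you do not carry out the computation, and that is the only nontrivial step in the lemma. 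As written the proposal does not prove the statement. (Note the paper itself gives no internal proof; it defers to the appendix of \cite{KimKimKwon2024arxiv}, so there is no in-paper argument to compare yours against.)
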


\begin{enumerate}
\item For $f,g\in H^{\frac{1}{2}+}$, we have 
\begin{align}
fg=\mathcal{H}f\cdot\mathcal{H}g-\mathcal{H}(f\cdot\mathcal{H}g+\mathcal{H}f\cdot g)\label{eq:HilbertProductRule}
\end{align}
in a pointwise sense. 
\item For $f\in\langle x\rangle^{-1}L^{2}$, we have 
\begin{align}
[x,\mathcal{H}]f(x)=\frac{1}{\pi}\int_{\mathbb{R}}f(y)dy.\label{eq:CommuteHilbert}
\end{align}
\item If $f\in H^{1}$ and $\partial_{x}f\in\langle x\rangle^{-1}L^{2}$,
then we have 
\begin{align}
\partial_{x}[x,\mathcal{H}]f=[x,\mathcal{H}]\partial_{x}f=0,\quad\text{i.e.,}\quad[\mathcal{H}\partial_{x},x]f=\mathcal{H}f.\label{eq:CommuteHilbertDerivative}
\end{align}
\end{enumerate}
The
proof of Lemma \ref{LemmaHilbertUsefulEquation} can be found in Appendix of \cite{KimKimKwon2024arxiv}.

We present the Lax pair structure for \eqref{CMdnls-gauged}, along with the hierarchy of conservation laws, which serves as our key ingredient. This structure was rigorously proved for the first time in \cite{GerardLenzmann2024CPAM}.
\begin{prop}[Integrability for \eqref{CMdnls-gauged}]
\label{prop:Lax} For $v\in C([0,T];H^{s})$ which
solves \eqref{CMdnls-gauged} with $s\geq2$, the followings hold true:
\begin{enumerate}
    \item(Lax equation) We have
    \begin{align}
        \partial_{t}\widetilde{\mathbf{D}}_{v}=[-iH_{v},\widetilde{\mathbf{D}}_{v}].\label{eq:LaxEqu Unconditional}
    \end{align}
    \item(Hierarchy of conservation laws) For $j\in \bbN$ with $0\leq j\leq 2s$, the quantities
    \begin{align}
        I_j(v)\coloneqq(\td \bfD_v^j v,v)_r \label{eq:hierarchy}
    \end{align}
    are conserved.

    \item Formally, we have
    \begin{gather}
        \partial_{t}(\widetilde{\mathbf{D}}_{v}^jv)+iH_{v}\widetilde{\mathbf{D}}_{v}^jv=0, \label{eq:vk equ}
        \\
    	\widetilde{\mathbf{D}}_{v}^2h=-H_{v}h+\tfrac{1}{2}\widetilde{\mathbf{D}}_{v}v\mathcal{H}(\overline{v}h)-\tfrac{1}{2}v\mathcal{H}(\overline{\widetilde{\mathbf{D}}_{v}v}h)\label{eq:Dv square}
    \end{gather}
\end{enumerate}
\end{prop}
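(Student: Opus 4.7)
The plan is to prove the three parts in the order (3)$\to$(1)$\to$(2): the algebraic identity \eqref{eq:Dv square} furnishes the cleanest bridge between the second-order operator $H_v$ and the first-order $\widetilde{\mathbf{D}}_v$, and once this identity and \eqref{eq:LaxEqu Unconditional} are in hand, the evolution \eqref{eq:vk equ} and conservation of $I_j$ follow essentially by bookkeeping.

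For the algebraic identity \eqref{eq:Dv square}, I would expand
\begin{equation*}
\widetilde{\mathbf{D}}_v^2 h = \partial_{xx} h + \tfrac{1}{2}(\partial_x v)\mathcal{H}(\bar v h) + \tfrac{1}{2} v\mathcal{H}(\partial_x \bar v \cdot h) + v\mathcal{H}(\bar v \partial_x h) + \tfrac{1}{4} v\mathcal{H}\bigl(|v|^2 \mathcal{H}(\bar v h)\bigr)
\end{equation*}
using $\partial_x \mathcal{H} = \mathcal{H}\partial_x$, and rewrite $v\mathcal{H}(\bar v \partial_x h) - v|D|(\bar v h) = -v\mathcal{H}(\partial_x \bar v \cdot h)$. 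After substituting $\widetilde{\mathbf{D}}_v v = \partial_x v + \tfrac{1}{2} v \mathcal{H}(|v|^2)$ into the target expression $\tfrac{1}{2}\widetilde{\mathbf{D}}_v v\cdot\mathcal{H}(\bar v h) - \tfrac{1}{2} v\mathcal{H}(\overline{\widetilde{\mathbf{D}}_v v}\cdot h) - H_v h$, the assertion of \eqref{eq:Dv square} reduces to the single identity
\begin{equation*}
\mathcal{H}\bigl(|v|^2 \mathcal{H}(\bar v h) + \bar v h \cdot \mathcal{H}(|v|^2)\bigr) = \mathcal{H}(|v|^2)\cdot \mathcal{H}(\bar v h) - |v|^2 \bar v h,
\end{equation*}
which is precisely Tricomi's product formula, Lemma~\ref{LemmaHilbertUsefulEquation}(1), with $f = |v|^2$ and $g = \bar v h$.

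Given \eqref{eq:Dv square}, I would then derive \eqref{eq:LaxEqu Unconditional}. Reading $\partial_t v = -iH_v v$ off \eqref{CMdnls-gauged} and using that $\widetilde{\mathbf{D}}_v$ depends on $v$ only through the quadratic combination $\tfrac{1}{2} v \mathcal{H}(\bar v\,\cdot\,)$,
\begin{equation*}
(\partial_t \widetilde{\mathbf{D}}_v) f = -\tfrac{i}{2} (H_v v)\mathcal{H}(\bar v f) + \tfrac{i}{2} v \mathcal{H}(\overline{H_v v}\cdot f).
\end{equation*}
Substituting $H_v v$ via \eqref{eq:Dv square} at $h=v$ and expanding $[-iH_v, \widetilde{\mathbf{D}}_v] f = -i(H_v \widetilde{\mathbf{D}}_v f - \widetilde{\mathbf{D}}_v H_v f)$ directly (the top-order terms $-\partial_{xxx} f$ and $\tfrac{1}{4}|v|^4 \partial_x f$ cancel on the nose), the two sides match after one further use of Tricomi. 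Iterating the Leibniz rule gives $\partial_t \widetilde{\mathbf{D}}_v^j = [-iH_v, \widetilde{\mathbf{D}}_v^j]$; applying this to $v$ and combining with $\partial_t v = -iH_v v$ telescopes to the evolution \eqref{eq:vk equ}.

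For the hierarchy \eqref{eq:hierarchy}, I would observe that $H_v$ is complex self-adjoint: the pieces $-\partial_{xx}$ and $\tfrac{1}{4}|v|^4$ clearly are, while for $v|D|\bar v$ one has $\langle v|D|(\bar v f), g\rangle = \langle \bar v f, |D|(\bar v g)\rangle = \langle f, v|D|(\bar v g)\rangle$ by self-adjointness of $|D|$. Hence
\begin{equation*}
\tfrac{d}{dt}\langle \widetilde{\mathbf{D}}_v^j v, v\rangle = \langle -iH_v \widetilde{\mathbf{D}}_v^j v, v\rangle + \langle \widetilde{\mathbf{D}}_v^j v, -iH_v v\rangle = -i\langle \widetilde{\mathbf{D}}_v^j v, H_v v\rangle + i\langle \widetilde{\mathbf{D}}_v^j v, H_v v\rangle = 0,
\end{equation*}
and taking real parts gives conservation of $I_j(v)$. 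Well-posedness of the pairing up to $j \leq 2s$ uses the anti-self-adjointness $\widetilde{\mathbf{D}}_v^* = -\widetilde{\mathbf{D}}_v$ (in the real inner product) to redistribute as $\pm\langle \widetilde{\mathbf{D}}_v^{\lfloor j/2\rfloor} v, \widetilde{\mathbf{D}}_v^{\lceil j/2\rceil} v\rangle$, which only demands $v \in H^{\lceil j/2\rceil}$. The main obstacle is the algebraic bookkeeping concentrated in the first step: one must track how each piece of the five-term expansion of $\widetilde{\mathbf{D}}_v^2 h$ pairs against the three pieces of $H_v$, with the only nontrivial simplification being the single application of Tricomi that converts the double Hilbert transform $v\mathcal{H}(|v|^2 \mathcal{H}(\bar v h))$ into the local contribution $-\tfrac{1}{4}|v|^4 h$ plus the advertised cross terms.
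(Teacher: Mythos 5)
Your proposal is correct and follows the same line of argument that the paper delegates to its references: the identity \eqref{eq:Dv square} reduces, exactly as you say, to a single application of the Tricomi product formula (Lemma~\ref{LemmaHilbertUsefulEquation}(1)) with $f=|v|^2$, $g=\bar v h$; the Lax commutator is a direct computation using $\partial_t v = -iH_v v$; \eqref{eq:vk equ} then follows from the operator identity $\partial_t \widetilde{\mathbf{D}}_v^j = [-iH_v,\widetilde{\mathbf{D}}_v^j]$ and Leibniz; and conservation of $I_j$ rests on the complex self-adjointness of $H_v$ together with the skew-adjointness $\widetilde{\mathbf{D}}_v^*=-\widetilde{\mathbf{D}}_v$ to make sense of the pairing for $j\le 2s$. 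The only place you are light is the actual matching of the two sides of \eqref{eq:LaxEqu Unconditional} after the Tricomi step, but that is the same bookkeeping carried out in the cited source, so the plan is sound.
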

For the proof of $(1)$ and \eqref{eq:Dv square}, see Proposition 3.5 and the proof of Proposition 3.7 in \cite{KimKimKwon2024arxiv}, respectively. The equation \eqref{eq:vk equ} follows from a straightforward calculation using the equation for $v$ and \eqref{eq:LaxEqu Unconditional}. The proof of $(2)$ is standard. See \cite{GerardLenzmann2024CPAM}, Lemma 2.4 for example.

\begin{prop}[Conjugation identity]\label{prop:AQ-BQ definition}
    The following hold true: 
    \begin{enumerate}
        \item (Conjugation identity) We have 
        \begin{align}
    		L_QiL_Q^*f=iH_Qf=iA_{Q}^*A_{Q}f=i\td\bfD_Q^*\td\bfD_Qf \label{eq:conjugate identity}
    	\end{align}
        for sufficiently good $f$, i.e., $f\in H^{2}$. 
        \item (Repulsivity) For $\partial_{x}f\in H^{1}$, 
        \begin{align}
        A_{Q}A_{Q}^{*}f=-\partial_{xx}f, \label{eq:repulsivity}
        \end{align}
        \item (Kernel for $A_{Q}$ on $\dot{\mathcal{H}}^{1}$) Let $v\in\dot{\mathcal{H}}^{1}$
        with $A_{Q}v=0$. Then, 
        \begin{align}
        v\in\textnormal{span}_{\mathbb{C}}\{Q,xQ\}.\label{eq:kernel AQ}
        \end{align}
    \end{enumerate}
    For $B_{Q}$, the following hold true: 
    \begin{enumerate}
        \item[(4)] \setcounter{enumi}{4} $A_{Q}=\partial_{x}B_{Q}$ and $A_{Q}^{*}=-B_{Q}^{*}\partial_{x}$.
        In addition, $A_{Q}^{*}A_{Q}=-B_{Q}^{*}\partial_{xx}B_{Q}$. 
        \item We have 
        \begin{align}
        B_{Q}^{*}B_{Q}f=f-\frac{1}{2\pi}Q\int_{\R}Qfdx,\quad B_{Q}B_{Q}^{*}=I.\label{eq:BQBQstar equal I}
        \end{align}
    \end{enumerate}
    Furthermore, the linear operator $\widetilde{\D}_Q$ satisfies the following decomposition:
    \begin{enumerate}
        \item[(6)] For $f\in H^1$,
        \begin{align}\label{eq:DQ BQ decomp}
        	\widetilde{\D}_Qf=B_Q^*\partial_xB_Qf. 
        \end{align}
    \end{enumerate}
\end{prop}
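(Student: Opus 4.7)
The plan is to prove the items in a bottom-up order, since most of the statement is built from the bookkeeping items (4) and (5). Item (4) is immediate from the definitions: $A_Q = \partial_x(x-\calH)\langle x\rangle^{-1} = \partial_x B_Q$, and taking the formal $L^2$-adjoint with respect to $(\cdot,\cdot)_r$ gives $A_Q^* = -B_Q^*\partial_x$, hence $A_Q^*A_Q = -B_Q^*\partial_{xx}B_Q$. For item (5), I would use $\calH^2 = -I$ together with the commutator identity \eqref{eq:CommuteHilbert} to establish the operator identity
\[
    (x+\calH)(x-\calH)g = (x^2+1)g - \tfrac{1}{\pi}\int g\,dy.
\]
Conjugating by $\langle x\rangle^{-1}$ and rewriting $\langle x\rangle^{-1} = Q/\sqrt{2}$ immediately yields $B_Q^*B_Qf = f - \frac{1}{2\pi}Q\int Qf\,dx$. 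For $B_QB_Q^* = I$, the $\langle x\rangle^{-2}$ now sits between the two factors, so I would instead expand $(x-\calH)\langle x\rangle^{-2}(x+\calH)f$ and apply Lemma~\ref{LemmaHilbertUsefulEquation}(1) with $g = (1+x^2)^{-1}$ (using $\calH g = x(1+x^2)^{-1}$ from \eqref{eq:algebraic 1}) to get the key cancellation identity
\[
    \calH\!\Bb{\tfrac{xf}{1+x^2}} + \calH\!\Bb{\tfrac{\calH f}{1+x^2}} = \tfrac{x\calH f - f}{1+x^2},
\]
after which all four terms collapse to $f$.

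Items (2), (3), and (6) then follow as direct consequences. For (2), combining (4) with $B_QB_Q^* = I$ from (5) gives $A_QA_Q^* = \partial_xB_Q(-B_Q^*\partial_x) = -\partial_x(B_QB_Q^*)\partial_x = -\partial_{xx}$. For the kernel characterization (3), if $A_Qv = 0$ on $\dot{\calH}^1$, then $\partial_x(B_Qv) = 0$, so $B_Qv$ is a constant (in the distributional sense). Using \eqref{eq:algebraic 1} together with \eqref{eq:CommuteHilbert}, a short direct computation gives $B_QQ = 0$ and $B_Q(xQ) = \sqrt{2}$; subtracting the appropriate multiple of $xQ$ reduces the problem to $B_Qw = 0$, and applying $B_Q^*$ together with the formula from (5) identifies $w$ as a multiple of $Q$. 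Item (6) is a direct expansion: using $\partial_x(x-\calH) = I + (x-\calH)\partial_x$ and the identity from (5), together with $\int\partial_x(\langle x\rangle^{-1}f)\,dx = 0$ for decaying $f$, the local parts cancel against $x\langle x\rangle^{-2}f$ and the nonlocal term collapses to $\langle x\rangle^{-1}\calH(\langle x\rangle^{-1}f) = \frac{1}{2}Q\calH(Qf)$, matching $\td{\bfD}_Qf$.

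The remaining item (1) is the most delicate and will be the main technical obstacle. The middle chain $iA_Q^*A_Q = i\td{\bfD}_Q^*\td{\bfD}_Q$ follows painlessly from (6) and (5): writing $\td{\bfD}_Q = B_Q^*\partial_xB_Q$ and taking adjoints,
\[
    \td{\bfD}_Q^*\td{\bfD}_Q = B_Q^*(-\partial_x)B_QB_Q^*\partial_xB_Q = -B_Q^*\partial_{xx}B_Q = A_Q^*A_Q.
\]
To identify this with $iH_Q$, I would expand both $A_Q^*A_Q$ and $H_Q = -\partial_{xx} + \tfrac{1}{4}Q^4 - Q|D|Q$ pointwise using the explicit form $Q = \sqrt 2 \langle x\rangle^{-1}$ together with \eqref{eq:algebraic 1}--\eqref{eq:algebraic 2}. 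Finally, the outer equality $L_QiL_Q^* = iH_Q$ is the self-dual factorization, and is the single step requiring genuine computational effort: splitting $f = g+ih$ into real and imaginary parts, I would compute $L_Q^*f$, then $iL_Q^*f$, and then $L_Q(iL_Q^*f)$, collecting all nonlocal terms of the form $Q\calH(Q\,\Re(\cdot))$ using the same Hilbert identities. The self-duality predicts that the off-diagonal couplings between real and imaginary parts cancel exactly; this requires careful bookkeeping but no new ideas beyond the algebraic facts already gathered in Section~\ref{sec:notation}.
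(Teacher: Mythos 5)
Your bottom-up plan is sound and the explicit steps you carry out are correct, but you should be aware that the paper itself does \emph{not} prove items (1)--(5): it simply cites Proposition~3.7 of \cite{KimKimKwon2024arxiv} and then proves only (6). So your proposal is a genuinely different route: a from-scratch reconstruction versus a citation. Within the parts you actually write out, the key identities all check: $(x+\calH)(x-\calH)g=(1+x^2)g-\tfrac1\pi\int g$ does follow from $\calH^2=-I$ and \eqref{eq:CommuteHilbert}, and your cancellation identity for $B_QB_Q^*$ is exactly what Lemma~\ref{LemmaHilbertUsefulEquation}(1) gives with $g=(1+x^2)^{-1}$ and $\calH g$ from \eqref{eq:algebraic 1}. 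Items (2), (4) and the chain $A_Q^*A_Q=\td\bfD_Q^*\td\bfD_Q$ via $B_QB_Q^*=I$ are clean and correct.

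For item (6) your route differs from the paper's: you push $\partial_x$ through $(x-\calH)$ via $\partial_x(x-\calH)=I+(x-\calH)\partial_x$, then invoke the same $(x+\calH)(x-\calH)$ collapse used in (5) plus $\int\partial_x(\langle x\rangle^{-1}f)\,dx=0$; the paper instead splits $B_Q^*\partial_xB_Q$ directly into $\calH$-free terms (which collapse to $\partial_x f$) and $\calH$-bearing terms (which collapse to $\tfrac12 Q\calH(Qf)$ via \eqref{eq:CommuteHilbertDerivative}). Both are elementary and about the same length; your version has the pleasant feature of reusing the $(x+\calH)(x-\calH)$ computation from (5) rather than redoing a commutator bookkeeping, at the modest cost of needing $\int\partial_x(\langle x\rangle^{-1}f)=0$.

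Two points are underjustified in your sketch. In item (3), after reducing to $B_Qw=0$, applying $B_Q^*$ and the formula from (5) requires $Qw\in L^1$, which is not immediate from $w\in\dot\calH^1$; you first need to observe that $B_Qw=0$ forces $x\langle x\rangle^{-1}w=\calH(\langle x\rangle^{-1}w)\in L^2$, whence $w\in L^2$, and then $Qw\in L^1$. (Alternatively, you can bypass (5) entirely by applying $\calH$ to $x\langle x\rangle^{-1}w=\calH(\langle x\rangle^{-1}w)$ and using \eqref{eq:CommuteHilbert} to conclude $(1+x^2)\langle x\rangle^{-1}w$ is constant.) For item (1) you correctly recognise that the outer equalities $L_QiL_Q^*=iH_Q$ and $H_Q=A_Q^*A_Q$ are the real work, but you only describe them and do not carry out the computation. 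Since your entire premise is to avoid citing \cite{KimKimKwon2024arxiv}, leaving this step as ``careful bookkeeping but no new ideas'' is a genuine gap: the cancellation of off-diagonal real/imaginary couplings in $L_QiL_Q^*$, and the matching of $H_Q=-\partial_{xx}+\tfrac14Q^4-Q|D|Q$ against $-B_Q^*\partial_{xx}B_Q$, both need to be exhibited concretely.
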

Motivated by \cite{KimKwon2020blowup,KimKwonOh2020blowup}, the authors of \cite{KimKimKwon2024arxiv} first identified $L_QiL_Q^*=i\td\bfD_Q^*\td\bfD_Q $ by linearizing \eqref{eq:vk equ} around $Q$. However, since the supersymmetric conjugate does not exhibit repulsive dynamics, they derived an alternative conjugation identity $L_QiL_Q^*=iA_Q^*A_Q$ with the repulsivity \eqref{eq:repulsivity}. Additionally, recognizing that a derivative can be separated from $A_Q$, they introduced $B_Q$ and computed \eqref{eq:BQBQstar equal I}. We refer to \cite{KimKimKwon2024arxiv} for the original motivation and derivation of the operators $A_Q$ and $B_Q$.
Building on $i \td\bfD_Q^* \td\bfD_Q = i A_Q^* A_Q$, we observe that \eqref{eq:DQ BQ decomp} holds. This new observation, developed in the present work, allows us to utilize the hierarchy of conservation laws \eqref{eq:hierarchy} when the solution is near $Q$.

\begin{proof}[Proof of Proposition \ref{prop:AQ-BQ definition}]
    The proof of (1)--(5) is given in \cite{KimKimKwon2024arxiv}, Proposition 3.8, and the identity in (6), first observed in the present work, is also recorded there. Since (6) is one of the new observations of this paper, we include its short proof here. 
    
    Collecting terms in $B_Q^*\partial_xB_Qf$ without the Hilbert transform $\calH$, we have
    \begin{align*}
        x\langle x \rangle^{-1}\partial_x(x\langle x \rangle^{-1}f)
        +\langle x \rangle^{-1}\calH \partial_x(-\calH)(\langle x \rangle^{-1}f)
        =
        \partial_x f.
    \end{align*}
    Here, we used $\calH \partial_x(-\calH)=-\calH^2 \partial_x=\partial_x$.
    For the terms with the Hilbert transform, using \eqref{eq:CommuteHilbertDerivative}, we conclude \eqref{eq:DQ BQ decomp} by
    \begin{align*}
        &\langle x \rangle^{-1}\calH\partial_x(x\langle x \rangle^{-1}f)
        -x\langle x \rangle^{-1}\partial_x\calH(\langle x \rangle^{-1}f)
        \\
        &=\langle x \rangle^{-1}\partial_x[\calH,x](\langle x \rangle^{-1}f)
        +
        \langle x \rangle^{-1}\calH(\langle x \rangle^{-1}f)
        =\langle x \rangle^{-1}\calH(\langle x \rangle^{-1}f)=\tfrac{1}{2}Q\calH(Qf). \qedhere
    \end{align*}
\end{proof}

\section{Profiles for the nonlinear variables}\label{sec:profile}
To employ the hierarchy of conservation laws, we use the nonlinear adapted variables $\td\bfD_v^kv$, with decomposition and modulation analysis performed in terms of these variables. In this section, we construct the blow-up profiles for the nonlinear variables, and establish the ODE dynamics of $b$ and $\eta$.

\subsection{Linearized dynamics}\label{sec:linearized dynamics}
In this subsection, we review the linearization of \eqref{CMdnls-gauged} around $Q$. For more details, refer to \cite{KimKimKwon2024arxiv}, Section 3. In addition, we find appropriate blow-up profiles for $v$ and the higher-order variables $\td\bfD_v^kv$. 

From the form of energy in \eqref{eq:intro energy gauge}, we rewrite the energy as 
\begin{align*}
E(v)=\frac{1}{2}\int_{\mathbb{R}}\left|\mathbf{D}_{v}v\right|^{2}dx.
\end{align*}
Similarly to $\mathcal{R}$, the function $Q$ is the unique solution to the Bogomol'ny\u{\i} equation up to symmetries, $\bfD_{Q}Q=0$. We linearize the map
$v\mapsto\mathbf{D}_{v}v$ as follows:
\begin{align*}
\mathbf{D}_{v+\eps}(v+\eps)=\mathbf{D}_{v}v+L_{v}\eps+N_{v}(\eps).
\end{align*}
Using the adjoint operator $L_{v}^{\ast}$, we have $i\nabla E(v)=iL_{v}^{*}\bfD_{v}v$.
Thus, we linearize \eqref{CMdnls-gauged} as 
\begin{align*}
iL_{w+\eps}^{*}\bfD_{w+\eps}(w+\eps)=iL_{w}^{*}\bfD_{w}w+i\calL_{w}\eps+R_{w}(\eps)
\end{align*}
where $i\calL_{w}\eps$ is the linear part, and $R_{w}(\eps)$ is
the nonlinear part for $\eps$. If one chooses $w=Q$, then using $\bfD_{Q}Q=0$,
one derive the self-dual factorization 
\begin{align*}
i\calL_{Q}=iL_{Q}^{*}L_{Q}.
\end{align*}
To modulate out the kernel directions of $i\mathcal{L}_{Q}$ and establish the coercivity of the orthogonal component, we first recall the kernel properties and the coercivity estimate from \cite{KimKimKwon2024arxiv}. Since our analysis is conducted on the even function spaces, the kernel space of $L_Q$ is determined as follows:
\begin{align*}
    \textnormal{ker}\,L_{Q}=\textnormal{ker}\,\mathcal{L}_{Q}
    =\textnormal{span}_{\mathbb{R}}\{\calK_1,\calK_2\} \textnormal{ on even function spaces}    
\end{align*}
where $\calK_j$'s are given by \eqref{eq:kernel element} and $\Lambda$ is the $L^{2}$-scaling generator, $\Lambda f:=\frac{f}{2}+x\partial_{x}f$. Here $\calK_3$ is defined as $i(1+y^2)Q$, whereas $iy^2Q$ was used in \cite{KimKimKwon2024arxiv}; this does not affect the kernel structure, since $\calK_1=iQ$ already accounts for the phase direction. The motivation for this choice will be explained in Section~\ref{sec:tail compute}. Each kernel relation is derived from the symmetries of phase rotation and scaling. If no even assumption is made, the function $\partial_x Q$, arising from translation, also becomes a kernel element.

Under the rough decomposition $v=Q+\wt\eps$, the nonlinear variable $\td\bfD_{v}^{j}v= v_j$ can be linearized as
\begin{align}
    v_j\eqqcolon \td\bfD_{v}^{j-1}(\td\bfD_{v}v)
    \approx \td\bfD_{Q}^{j-1}L_Q\wt\eps
    =B_Q^*\partial_y^{j-1}B_Q L_Q\wt\eps. \label{eq:non variable linear rough}
\end{align}
However, $L_Q$ is not defined on $\dot \calH^j$ for $j\geq2$ because of the Hilbert transform $\calH$. This leads us to define new linear operators.
We observe that
\begin{align*}
	&B_QL_Q\Re=(1+\partial_yy
	-\calH\partial_y)\langle y\rangle^{-1}\Re,
	\\
	&B_QL_Qi\Im=B_Q\bfD_{Q}i\Im=(y-\calH)\langle y\rangle^{-2}\partial_y\langle y\rangle i\Im.
\end{align*}
For $\wt\eps \in H^{2L}$ and $1\leq j \leq 2L$, we derive
\begin{align*}
	\begin{split}
		\partial_y^{j-1}B_QL_Q\Re\wt\eps
		&=
		\partial_y^{j-1}(1+\partial_yy)(\langle y\rangle^{-1}\Re\wt\eps)
		-\calH\partial_y^{j}(\langle y\rangle^{-1}\Re\wt\eps),
		\\
		\partial_y^{j-1}B_QL_Q i\Im\wt\eps
		&=\partial_y^{j-1}[y\langle y\rangle^{-2}\partial_y(\langle y\rangle i\Im\wt\eps)]-\calH\partial_y^{j-1}[\langle y\rangle^{-2}\partial_y(\langle y\rangle i\Im\wt\eps)].
	\end{split}
\end{align*}
In view of this, we define linear operators by
\begin{align*}
	\calL_{j}\coloneqq (\calL^{1}_j,\calL^{2}_j)=\calL^{1}_j\Re+i\calL^{2}_j \Im,
\end{align*}
where
\begin{align*}
	\begin{split}
		\calL^{1}_jf&\coloneqq 
		\partial_y^{j-1}(1+\partial_yy)(\langle y\rangle^{-1}f)
		-\calH\partial_y^{j}(\langle y\rangle^{-1}f),
		\\
		\calL^{2}_jg&\coloneqq
		\partial_y^{j-1}[y\langle y\rangle^{-2}\partial_y(\langle y\rangle g)]-\calH\partial_y^{j-1}[\langle y\rangle^{-2}\partial_y(\langle y\rangle g)].
	\end{split} 
\end{align*}
One can check that
\begin{align}
	\calL_{j}\wt\eps
	=\partial_y^{j-1}B_Q L_Q\wt\eps \quad \text{for}\quad 1\leq j\leq 2L, \quad \wt\eps \in H^{2L}. \label{eq:BQLQ and calL}
\end{align}
In \cite{KimKimKwon2024arxiv}, the authors used a coercivity lemma for $A_Q\td L_Q$ with a modified linearized operator $\td L_Q$ instead of $A_QL_Q$, since $L_Q$ is not well-defined on $\dot \calH^2$. However, we do not need to introduce such an operator, as we have $\calL_2 f = A_Q L_Q f$ for smooth $f$, and $\calL_2$ is well-defined on $\dot \calH^2$. More precisely, we have $A_Q\td L_Qf=\calL_2f$ for any $f\in \dot \calH^2$. Thus, the kernel space of $\calL_2$ is given by
\begin{align*}
    \textnormal{ker}\,\calL_{2}
    =\textnormal{span}_{\mathbb{R}}\{\calK_1,\calK_2,\calK_3,\calK_4\} \textnormal{ on even function spaces}.
\end{align*}
We note that $\partial_x Q$ and $ixQ$ are kernel elements of $\calL_2$ in the general space; see Remark~\ref{rem:kernel of Lj}.

On the other hand, we also use another linear operator, $A_Q$, which is motivated by the conjugation identity \eqref{eq:conjugate identity}. As mentioned in the introduction, instead of utilizing the repulsive property \eqref{eq:repulsivity} for $A_QA_Q^*$, we use \eqref{eq:DQ BQ decomp}. However, the coercivity of $A_Q$ remains essential. Thanks to \eqref{eq:kernel AQ}, we note the kernel for $A_Q$ in odd function spaces as follows:
\begin{align*}
    \textnormal{ker}\,A_Q
    =\textnormal{span}_{\mathbb{R}}\{ixQ,xQ\} \textnormal{ on odd function spaces}.
\end{align*}
Unlike the earlier case, the odd restriction is imposed because we will address terms of the form $A_Q v_{2k-1}$. Here, $v_{2k-1}$ is odd, as the operator $\td\bfD_v$ alternates between even and odd parity when $v$ is even.

We recall that $\calK_j$ and $\mathring \calK_i$ are given by \eqref{eq:kernel element}. Now, we introduce coercivity lemmas for our analysis.
\begin{lem}[Linear coercivity estimates]
\label{lem:coercivity for linear} The followings hold true:
\begin{enumerate}
    \item (Coercivity for $L_{Q}$ on $\dot{\mathcal{H}}^{1}_e$) Let $\psi_{1},\psi_{2}$ be elements of the dual space $(\dot{\mathcal{H}}^{1}_e)^{*}$ such that $(\psi_{i},\mathcal{K}_{j})_{r}=\delta_{i,j}$. Then, we have a coercivity estimate 
    \begin{align}
        \|v\|_{\dot{\mathcal{H}}^{1}_e}\sim\|L_{Q}v\|_{L^{2}},\quad\forall v\in\dot{\mathcal{H}}^{1}_e\cap\{\psi_{1},\psi_{2}\}^{\perp}. \label{eq:coercivity L1}
    \end{align}
    
    \item (Coercivity for $\calL_{2}$ on $\dot{\mathcal{H}}^{2}_e$) Let $\psi_{1},\psi_{2},\psi_{3},\psi_{4}$ be elements of the dual space $(\dot{\mathcal{H}}^{2}_e)^{*}$ such that $(\psi_{i},\mathcal{K}_{j})_{r}=\delta_{i,j}$. Then, we have a coercivity estimate 
    \begin{align}
        \|v\|_{\dot{\mathcal{H}}^{2}_e}\sim\|\calL_{2}v\|_{L^{2}},
        \quad\forall v\in\dot{\mathcal{H}}^{2}_e\cap\{ \psi_{1},\psi_{2},\psi_{3},\psi_{4}\}^{\perp}. \label{eq:coercivity L2}
    \end{align}

    \item (Coercivity for $A_{Q}$ on $\dot{\mathcal{H}}^{1}_o$) Let $\psi_{1},\psi_{2}$ be elements of the dual space $(\dot{\mathcal{H}}^{1}_o)^{*}$ such that $(\psi_{i},\mathring \calK_j )_{r}=\delta_{i,j}$. Then, we have a coercivity estimate 
    \begin{align}
        \|v\|_{\dot{\mathcal{H}}^{1}}\sim \|A_{Q}v\|_{L^{2}},\quad\forall v\in\dot{\mathcal{H}}^{1}_o\cap\{\psi_{1},\psi_{2}\}^{\perp}. \label{eq:coercivity A1}
    \end{align}
\end{enumerate}
\end{lem}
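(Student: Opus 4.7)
The plan is to treat all three estimates by the standard compactness/kernel scheme, with parts (1) and (3) following the template of \cite{KimKimKwon2024arxiv} and part (2) reducing to them via the factorization $\mathcal{L}_2 = A_Q L_Q$. In each case, the upper bounds $\|L_Q v\|_{L^2}\lesssim\|v\|_{\dot{\mathcal{H}}^1}$, $\|\mathcal{L}_2 v\|_{L^2}\lesssim\|v\|_{\dot{\mathcal{H}}^2}$, and $\|A_Q v\|_{L^2}\lesssim\|v\|_{\dot{\mathcal{H}}^1}$ follow immediately from the explicit formulas, the $L^2$-boundedness of the Hilbert transform, and the fact that the weights $\langle y\rangle^{-j}$ in the definition of $\dot{\mathcal{H}}^k$ absorb the unbounded multipliers appearing in $L_Q$, $\mathcal{L}_2$, and $A_Q$.

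For the lower bounds in (1) and (3) I argue by contradiction. Assume a sequence $v_n$ (even for (1), odd for (3)) with $\|v_n\|_{\dot{\mathcal{H}}^1}=1$, $(\psi_i,v_n)_r=0$, and $\|L_Q v_n\|_{L^2}\to 0$ (respectively $\|A_Q v_n\|_{L^2}\to 0$). Weak compactness in $\dot{\mathcal{H}}^1$ together with the compact embedding $\dot{\mathcal{H}}^1\hookrightarrow L^2_{\mathrm{loc}}$ extracts a subsequential weak limit $v_*$ satisfying $L_Q v_*=0$ (resp. $A_Q v_*=0$) in the distributional sense, together with $(\psi_i,v_*)_r=0$ by continuity of the $\psi_i$. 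The kernel characterizations in the appropriate parity class --- $\ker L_Q=\mathrm{span}_{\mathbb{R}}\{iQ,\Lambda Q\}=\mathrm{span}\{\mathcal{K}_2,\mathcal{K}_1\}$ on even functions and $\ker A_Q=\mathrm{span}_{\mathbb{R}}\{ixQ,xQ\}=\mathrm{span}\{\mathring{\mathcal{K}}_1,\mathring{\mathcal{K}}_2\}$ on odd functions (the latter from \eqref{eq:kernel AQ}) --- combined with the bi-orthogonality $(\psi_i,\mathcal{K}_j)_r=\delta_{ij}$ (resp. with $\mathring{\mathcal{K}}$) force $v_*=0$. Finally, a weighted integration-by-parts identity testing $L_Q v_n$ (resp. $A_Q v_n$) against $v_n$ recovers the $\dot{\mathcal{H}}^1$ norm modulo a remainder controlled by $\|v_n\|_{L^2_{\mathrm{loc}}}\to 0$, contradicting the normalization $\|v_n\|_{\dot{\mathcal{H}}^1}=1$.

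For part (2), I first observe from \eqref{eq:BQLQ and calL} with $j=2$ and $A_Q=\partial_y B_Q$ that $\mathcal{L}_2 v = A_Q L_Q v$ for Schwartz $v$, and extend this identity to $v\in\dot{\mathcal{H}}^2_e$ by density. The goal is then to chain
\begin{equation*}
\|v\|_{\dot{\mathcal{H}}^2}\lesssim\|L_Q v\|_{\dot{\mathcal{H}}^1_o}\lesssim\|A_Q L_Q v\|_{L^2}=\|\mathcal{L}_2 v\|_{L^2}.
\end{equation*}
The second inequality is part (3) applied to $L_Q v\in\dot{\mathcal{H}}^1_o$, which requires orthogonality to $\mathring{\mathcal{K}}_1,\mathring{\mathcal{K}}_2$. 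I transfer this condition through the adjoint: $(L_Q v,\mathring{\mathcal{K}}_i)_r=(v,L_Q^*\mathring{\mathcal{K}}_i)_r$, and verify using the algebraic identities \eqref{eq:algebraic 1}--\eqref{eq:algebraic 2} that $L_Q^*\mathring{\mathcal{K}}_i\in\mathrm{span}\{\mathcal{K}_1,\ldots,\mathcal{K}_4\}$. Since the coercivity is insensitive to the particular choice of $\psi_i$ within the class of bi-orthogonal duals to $\{\mathcal{K}_j\}$ (a standard finite-dimensional perturbation argument), I may choose the $\psi_i$ so that $(\psi_i,v)_r=0$ implies $(v,L_Q^*\mathring{\mathcal{K}}_i)_r=0$. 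The first inequality is a one-derivative lift of part (1): writing $L_Q=\partial_y+(\text{zeroth-order weighted terms})$, the $\dot{\mathcal{H}}^1$ norm of $L_Q v$ controls $\|\partial_y^2 v\|_{L^2}$, while the remaining weighted pieces of the $\dot{\mathcal{H}}^2$ norm are absorbed by part (1) applied to $v$ itself (the orthogonality conditions on $v$ from (2) subsume those required in (1)).

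The main obstacle I anticipate lies in the orthogonality transfer step of part (2): verifying in closed form that $L_Q^*\mathring{\mathcal{K}}_i$ remains inside $\mathrm{span}\{\mathcal{K}_1,\ldots,\mathcal{K}_4\}$ despite the non-local cross-terms produced by $\mathcal{H}$ in $L_Q^*$. If the images happen to leak outside that span, the fallback is to carry out the compactness-and-contradiction argument directly at the $\dot{\mathcal{H}}^2$ level for $\mathcal{L}_2$; the only new ingredient would then be a virial-type identity recovering $\|\partial_y^2 v\|_{L^2}$ from $\|\mathcal{L}_2 v\|_{L^2}$ modulo $L^2_{\mathrm{loc}}$-compact remainders, while the kernel identification on even functions is already furnished by the discussion preceding the lemma.
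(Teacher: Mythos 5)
Your plan for parts (1) and (3) follows the standard compactness/kernel scheme, which is indeed the approach of the cited source; the paper itself simply cites \cite{KimKimKwon2024arxiv} for (1) and (2) and only proves (3). Two points, however, deserve correction.

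\emph{On part (3).} The step you phrase as ``a weighted integration-by-parts identity testing $A_Q v_n$ against $v_n$ recovers the $\dot{\mathcal H}^1$ norm modulo a remainder controlled by $\|v_n\|_{L^2_{\mathrm{loc}}}\to 0$'' is precisely where the nontrivial content lies and cannot be asserted without work. The subcoercivity inherited from \cite{KimKimKwon2024arxiv} reads $\|A_Q v\|_{L^2}+\||D|^{1/2}Qv\|_{L^2}\sim\|v\|_{\dot{\mathcal H}^1}$, and the remainder $\||D|^{1/2}Qv\|_{L^2}$ is \emph{nonlocal} and is \emph{not} controlled by $\|v_n\|_{L^2_{\mathrm{loc}}}$; the compactness contradiction would not close against it. The paper's actual contribution here is to improve this remainder to a genuinely local one, $\|Q^2 v\|_{L^2}$, by writing
\begin{align*}
\||D|^{\frac12}Qv\|_{L^2}^2=(|D|Qv,\langle y\rangle^{-2}Qv)_r+(\mathcal H[y\partial_y(Qv)],y\langle y\rangle^{-2}Qv)_r
\end{align*}
via \eqref{eq:CommuteHilbertDerivative}, and then absorbing. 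Without some such improvement your contradiction argument does not terminate.

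\emph{On part (2).} Your primary strategy --- transferring orthogonality through $L_Q^*$ and chaining $\|v\|_{\dot{\mathcal H}^2}\lesssim\|L_Qv\|_{\dot{\mathcal H}^1}\lesssim\|A_QL_Qv\|_{L^2}$ --- runs into exactly the leak you anticipated, and it does leak. A direct computation using \eqref{eq:algebraic 1} gives $L_Q^*(yQ)=Q$ and $L_Q^*(iyQ)=-2i\Lambda Q$; neither lies in $\mathrm{span}_{\mathbb R}\{\mathcal K_1,\dots,\mathcal K_4\}=\mathrm{span}_{\mathbb R}\{\Lambda Q,iQ,iy^2Q,(1+y^2)Q\}$ (clear the denominator $1+y^2$ and compare polynomial degrees to see $Q$ and $i\tfrac{y^2-1}{1+y^2}Q$ are not representable). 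So the orthogonality needed to invoke part (3) on $L_Q v$ is not inherited from orthogonality of $v$ to $\psi_1,\dots,\psi_4$, and no re-choice of biorthogonal duals $\psi_i$ fixes this since these functions simply sit outside the span. You must use your stated fallback --- do the compactness argument directly at the $\dot{\mathcal H}^2$ level for $\mathcal L_2$ --- which is in effect what the cited reference does and what the paper defers to rather than reproving.
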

We note that, due to the degeneracy of the linearized operators, the adapted Sobolev spaces $\dot{\mathcal{H}}^{1}$ and $\dot{\mathcal{H}}^{2}$ are used instead of the usual ones.
\begin{proof}[Proof of Lemma \ref{lem:coercivity for linear}]
    For $(1)$ and $(2)$, the proof can be found in \cite{KimKimKwon2024arxiv}. Hence, it suffices to prove $(3)$. Again from \cite{KimKimKwon2024arxiv}, we already know that the kernel of $A_Q$ on $\dot \calH^1_o$ is the span of $\{iyQ, yQ\}$, and we have the subcoercivity (\cite{KimKimKwon2024arxiv}, Lemma A.4),
    \begin{align*}
        \|A_{Q}v\|_{L^{2}}+\||D|^{\frac{1}{2}}Qv\|_{L^{2}}\sim\|v\|_{\dot{\mathcal{H}}^{1}}.
    \end{align*}
    Using \eqref{eq:CommuteHilbertDerivative} with $(1+y^2)\langle y\rangle^{-2}=1$ and $\||D|^{\frac{1}{2}}Qv\|_{L^{2}}^2
        =(|D|Qv,Qv)_r$, we have
    \begin{align*}
        \||D|^{\frac{1}{2}}Qv\|_{L^{2}}^2
        &=(|D|Qv,\langle y\rangle^{-2}Qv)_r
        +(|D|Qv,y^2\langle y\rangle^{-2}Qv)_r
        \\
        &=(|D|Qv,\langle y\rangle^{-2}Qv)_r
        +(\calH[y\partial_y(Qv)],y\langle y\rangle^{-2}Qv)_r\lesssim \|v\|_{\dot \calH^1}\|Q^2v\|_{L^2}.
    \end{align*}
    Thus, we obtain
    \begin{align*}
        \||D|^{\frac{1}{2}}Qv\|_{L^{2}}^2
        \le C\|v\|_{\dot \calH^1}\|Q^2v\|_{L^2}\leq \tfrac{C}{10^{10}}\|v\|_{\dot \calH^1}^2+C10^{10}\|Q^2v\|_{L^2}^2.
    \end{align*}
    This means that we improve the subcoercivity estimate such as
    \begin{align*}
        \|A_{Q}v\|_{L^{2}}+\|Q^2v\|_{L^{2}}
        \sim\|v\|_{\dot{\mathcal{H}}^{1}}.
    \end{align*}
    Then, by a standard argument, we derive the coercivity \eqref{eq:coercivity A1}. For more details, we refer to Appendix in \cite{KimKimKwon2024arxiv}.
\end{proof}
In view of previous quantized blow-up results, one might expect that the coercivity of $\td\bfD_{Q}^{j-1}L_Q$ has to be established. However, we aim to bypass this by taking advantage of the hierarchy of conservation laws. Instead, we will apply the coercivity for $A_Q$ to higher-order nonlinear variables. 

In the sequel, we will use $v$ and nonlinear variables $v_{2k-1}$ for each $1\le k \le L$. Under the radial assumption, it turns out to be sufficient to track the odd order nonlinear variables $v_{2k-1}$. See remark \ref{rem:why only 2k-1}. Accordingly, we will construct profiles for $v$ and $v_{2k-1}$. Although we do not rely on the coercivity of $\calL_{2k-1}$, we still require information about its kernel to describe the initial data. 
We define
\begin{equation}
    \begin{aligned}
        T_{1}(b_1,\eta_1)\coloneqq&-\frac{1}{4}(ib_1+\eta_1) (1+y^2)Q,
    	\\
    	T_{2k-1}(b_k,\eta_k)\coloneqq&-\frac{(-i)^{k-1}}{4k\cdot(2k-2)!}(ib_k+\eta_k) y^{2k-4}(1+y^2)^2Q, \quad (k\geq 2).
    \end{aligned} \label{eq:T_k}
\end{equation}
We further define the profiles $P_{2k-1}$ for the nonlinear variable $v_{2k-1}$ by
\begin{align}
    P_{2k-1}(b_k,\eta_k)&\coloneqq-(ib_k+\eta_k) \cdot (-i)^{k-1}\cdot \tfrac{y}{2}Q. \label{eq:P profile}
\end{align}
Then, we have the following lemma:
\begin{lem}\label{lem:profile choosing}
    Let $R\gg 1$. For $1\leq 2j\leq \ell$ and $1\leq k\leq L$, we have
	\begin{align}
    \calL_{\ell}T_{2j-1}=0,\quad \calL_{2k-1}T_{2k-1}(b_k,\eta_k)=B_Q P_{2k-1}(b_{k},\eta_{k}). \label{eq:higher-order profile}
	\end{align}
    For all $1\leq \ell\leq 2L$ and $1\leq j\leq L$, we have
    \begin{align}
        \|\calL_{\ell}(T_{2j-1}\chi_R)\|_{L^2}
        \lesssim (|b_j|+|\eta_j|)R^{2j-\ell-\frac{1}{2}}. \label{eq:profile estimate}
    \end{align}
	Moreover, for $1\leq j\leq k$, we have
    \begin{align}
        \|\langle y\rangle^{-1}\calL_{2k-1}[T_{2j-1}(1-\chi_R)]\|_{L^2}
        \lesssim (|b_j|+|\eta_j|)R^{2j-2k-\frac{1}{2}},  \label{eq:local profile estimate1}
    \end{align}
    and for $k< j$, we have
    \begin{align}
        \|\langle y\rangle^{-1}\calL_{2k-1}(T_{2j-1}\chi_R)\|_{L^2}
        \lesssim (|b_j|+|\eta_j|)R^{2j-2k-\frac{1}{2}}. \label{eq:local profile estimate2}
    \end{align}
\end{lem}
\begin{proof}[Proof of Lemma \ref{lem:profile choosing}]
    Let $m, n$ be integers with $n\geq 2$. We first claim that 
	\begin{align}
		\calL_{n}^{1}(y^{m-1}(1+y^2)Q)=\calL_{n}^{2}(y^{m+1}Q)=0\quad \text{for}\quad 1\leq m\leq n-1,\label{eq:kernel lemma claim}
	\end{align}
    and
    \begin{align*}
        \calL_{n}^{1}(y^{n-1}(1+y^2)Q)=\calL_{n}^{2}(y^{n-1}(1+y^2)Q)=(n+1)\cdot (n-1)!B_Q(yQ).
    \end{align*}
	For $\calL_{n}^{1}(y^{m-1}(1+y^2)Q)=0$, we have
	\begin{align*}
		&\calL_{n}^{1}(y^{m-1} (1+y^2)Q) 
		\\
		&=\partial_y^{n-1}(1+\partial_yy)(\langle y\rangle^{-1}y^{m-1} (1+y^2)Q)
		-\calH\partial_y^{n}(\langle y\rangle^{-1}y^{m-1} (1+y^2)Q)
		\\
		&=\sqrt{2}[\partial_y^{n-1}(y^{m-1})+\partial_y^{n}(y^{m})-\calH\partial_y^{n}(y^{m-1})]=0.
	\end{align*}
	If we take $m=n$, we derive
	\begin{align}
		\calL_{n}^{1}(y^{n-1} (1+y^2)Q) 
		=\sqrt{2}(n-1)!+\sqrt{2} n!=(n+1)\cdot (n-1)!B_Q(yQ). \label{eq:higher-order profile pf1}
	\end{align}
	To obtain $\calL_{n}^{2}(y^{m+1}Q)=0$, we compute by
	\begin{align*}
		\calL_{n}^{2}(y^{m+1}Q) 
		&=\partial_y^{n-1}[y\langle y\rangle^{-2}\partial_y(\langle y\rangle y^{m+1}Q)] 
		-\calH\partial_y^{n-1}[\langle y\rangle^{-2}\partial_y(\langle y\rangle y^{m+1}Q)] 
		\\
		&=\sqrt{2}(m+1)[\partial_y^{n-1}(y^{m+1}\langle y\rangle^{-2})-\calH\partial_y^{n-1}(y^{m}\langle y\rangle^{-2})] 
		\\
		&\eqqcolon\mathcal{P}_{m,n}.
	\end{align*}
	We claim $\mathcal{P}_{m,n}=0$ by the induction for $m$ when $1\leq m\leq n-1$. For $m=1$, since $y\langle y\rangle^{-2}\in H^{n}$, we have
	\begin{align*}
		\calH\partial_y^{n-1}(y\langle y\rangle^{-2})
		=\partial_y^{n-1}\calH(y\langle y\rangle^{-2})
		=-\partial_y^{n-1}\langle y\rangle^{-2}.
	\end{align*}
	Thus, we derive
	\begin{align*}
		\mathcal{P}_{1,n}= 2\sqrt{2}\partial_y^{n-1}[y^2\langle y\rangle^{-2}+\langle y\rangle^{-2}]=2\sqrt{2}\partial_y^{n-1}[1]=0.
	\end{align*}
	If $n=2$, the claim holds true. For $n\geq 3$ and $m=2$, we have
	\begin{align*}
		\calH\partial_y^{n-1}(y^{2}\langle y\rangle^{-2})
		=
		\calH\partial_y^{n-1}((1+y^{2})\langle y\rangle^{-2})-\calH\partial_y^{n-1}(\langle y\rangle^{-2})
		=-\calH\partial_y^{n-1}(\langle y\rangle^{-2}).
	\end{align*}
	Again using $y\langle y\rangle^{-2}\in H^{n}$, we deduce
	\begin{align*}
		\mathcal{P}_{2,n}=
		3\sqrt{2}\partial_y^{n-1}[y^3\langle y\rangle^{-2}+y\langle y\rangle^{-2}]=3\sqrt{2}\partial_y^{n-1}[y]=0.
	\end{align*}
	Now, to use induction, we suppose $\mathcal{P}_{2j-1,n}=\mathcal{P}_{2j,n}=0$ for $2j<n-1$. Then, we have
	\begin{align*}
		\mathcal{P}_{2j+1,n}&=\mathcal{P}_{2j+1,n}+\tfrac{2j+2}{2j}\mathcal{P}_{2j-1,n}
		\\
		&=
		\sqrt{2}(2j+2)[\partial_y^{n-1}(y^{2j}(1+y^2)\langle y\rangle^{-2})-\calH\partial_y^{n-1}(y^{2j-1}(1+y^2)\langle y\rangle^{-2})]
		\\
		&=2\sqrt{2}(j+1)[\partial_y^{n-1}(y^{2j})-\calH\partial_y^{n-1}(y^{2j-1})]
		=0.
	\end{align*}
	Similarly, we deduce $\mathcal{P}_{2j+2,n}=0$, and this leads to us $\calL_{n+1}^{2}(y^{m}Q)=0$ for all $1\leq m\leq n-1$. Therefore, we arrive at \eqref{eq:kernel lemma claim}. On the other hand, if $m=n$, by using $ B_Q(yQ)=\sqrt{2}$, we observe that
	\begin{align}
		\begin{split}
			\mathcal{P}_{n,n}=\mathcal{P}_{n,n}+\tfrac{n+1}{n-1}\mathcal{P}_{n-2,n}
			&=\sqrt{2}(n+1)[\partial_y^{n-1}(y^{n-1})-\calH\partial_y^{n-1}(y^{n-2})]
			\\
			&=(n+1)\cdot (n-1)!B_Q(yQ).
		\end{split}
		\label{eq:higher-order profile pf2}
	\end{align}
    We also note that, from \eqref{eq:BQLQ and calL} with $L_Q(iQ)=0$, we have
    \begin{align}
        \calL_{n}(iQ)=\calL_n^2(Q)=0. \label{eq:kernel calL iQ}
    \end{align}
    Combining this, \eqref{eq:kernel lemma claim}, \eqref{eq:higher-order profile pf1}, \eqref{eq:higher-order profile pf2}, and the definition of the profiles $T_{2k-1}$, we conclude \eqref{eq:higher-order profile}.

    Now, we show \eqref{eq:profile estimate}. For the case $2j-1<\ell$, \eqref{eq:kernel lemma claim} and \eqref{eq:kernel calL iQ} yield
    \begin{align*}
        \|\calL_{\ell}[T_{2j-1}\chi_R]\|_{L^2}
        &=\|\calL_{\ell}[T_{2j-1}(1-\chi_R)]\|_{L^2}
        \\
        &\lesssim
        (|b_j|+|\eta_j|)\|\langle y\rangle^{2j-1}(1-\chi_R)\|_{\dot\calH^{\ell}}
        \lesssim (|b_j|+|\eta_j|)R^{2j-\ell-\frac{1}{2}}.
    \end{align*}
    When $2j-1\geq \ell$, we also get
    \begin{align*}
        \|\calL_{\ell}[T_{2j-1}\chi_R]\|_{L^2}\lesssim (|b_j|+|\eta_j|)\|\langle y\rangle^{2j-1}\chi_R\|_{\dot\calH^{\ell}}
        \lesssim (|b_j|+|\eta_j|)R^{2j-\ell-\frac{1}{2}}.
    \end{align*}
    Similarly, for \eqref{eq:local profile estimate1}, using the definition of $\calL_n$ and the fact $\|\langle y\rangle^{-1}\calH(f)\|_{L^2}\leq\|f\|_{L^2}$, we have
	\begin{align*}
		\|\langle y\rangle^{-1}\calL_{\ell}[T_{2j-1}(1-\chi_R)]\|_{L^2}
        &\lesssim
        (|b_j|+|\eta_j|)\|\langle y\rangle^{2j-2}(1-\chi_R)\|_{\dot\calH^{\ell}}
        \\
        &\lesssim (|b_j|+|\eta_j|)R^{2j-1-\ell-\frac{1}{2}}.
	\end{align*}
    Thus, taking $\ell=2k-1$, we derive \eqref{eq:local profile estimate1}. The proof of \eqref{eq:local profile estimate2} can be done by a similar argument, so we omit this.
\end{proof}
\begin{rem}\label{rem:kernel of Lj}
    In fact, we can infer the non-radial kernel of $\calL_j$ as follows: for $j\geq 2$,
    \begin{align}
        \textnormal{ker}\calL_{j}
        =\textnormal{span}_{\mathbb{R}} \{iQ,\Lambda Q,Q_y,iyQ, \iota y^{\ell-1}(1+y^2)Q: 1\leq \ell\leq j-1, \, \iota=1,i\}. \label{eq:general kernel nonrad}
    \end{align}
    A rigorous characterization of the kernel follows the same Fourier-side ODE argument as in \cite{KimKimKwon2024arxiv}. After setting $h=\langle y \rangle^{-1}f$, the equation $\calL_j f=0$ rewrites as a nonlocal ODE involving only $y$, $\partial_y$, and $\mathcal H$, with polynomial coefficients. Taking the Fourier transform then reduces it to a local ODE on each of the half-lines $\xi>0$ and $\xi<0$, with the only singularity at $\xi=0$ arising from the jump of $\operatorname{sgn}(\xi)$. Imposing parity and $\dot{\mathcal H}^j$ regularity then determine the kernel completely.

    However, since we do not utilize the coercivity of $\calL_j$ and consider only even data, Lemma \ref{lem:profile choosing} alone suffices. 
\end{rem}
\begin{rem}[Odd-order nonlinear variables]\label{rem:why only 2k-1}
    In the radial setting, we only use the odd-indexed parameters $v_{2k-1}$, $P_{2k-1}$, and $T_{2k-1}$.
    In view of \eqref{eq:general kernel nonrad} and Lemma \ref{lem:profile choosing}, if radial symmetry is not assumed, the profiles $P_{2k}$ for $v_{2k}$ are also needed. However, the radial symmetry removes the kernel elements $\iota y^{j-1}(1+y^2)Q$, $\iota=1,i$ for even $j\geq0$. Therefore, the profiles for $v_{2k}$ become zero, and it suffices to use $v_{2k-1}$. 
\end{rem}

\subsection{Dynamical laws of modulation parameters}\label{sec:dynamical laws} 
Building on the previous section, we aim to derive the formal dynamics of the modulation parameters. We change the original coordinates $(t,x)$ into the renormalized coordinates $(s,y)$. We define $(s,y)$ and the renormalized solution $w$ by 
\begin{align}
	s(t)=s_0+\int_0^t\frac{1}{\lambda(\tau)^{2}}d\tau,\quad y=\frac{x}{\lambda},\quad w(s,y)=\lambda^{\frac{1}{2}}e^{-i\gamma}v(t,\lambda y)|_{t=t(s)}. \label{eq:renormal}
\end{align}
Here, we abuse notation by writing $\lambda(s) = \lambda(t(s))$ and $\gamma(s) = \gamma(t(s))$, with analogous expressions for other parameters, treating them as functions of $s$ for simplicity.
The (renormalized) nonlinear adapted derivative $w_{k}$ is
defined by 
\begin{align}
	w_{k}\coloneqq\td\bfD_{w}^kw=\lambda^{\frac{2k+1}{2}} e^{-i\gamma}(v_{k})(t,\lambda y)|_{t=t(s)}. \label{eq:wk def}
\end{align}
We rewrite \eqref{CMdnls-gauged} and \eqref{eq:vk equ} as renormalized equations,
\begin{align}
	(\partial_s-\frac{\lambda_s}{\lambda}\Lambda +\gamma_s i)w+iL_w^*w_1&=0, \label{eq:w equ}
	\\
	(\partial_s-\frac{\lambda_s}{\lambda}\Lambda_{-(2k-1)} +\gamma_s i)w_{2k-1}+iH_ww_{2k-1}&=0. \label{eq:w2k-1 equ}
\end{align}
Using \eqref{eq:Dv square}, the equation \eqref{eq:w2k-1 equ} can be rewritten by
\begin{align}
    \begin{split}
        (\partial_s-\frac{\lambda_s}{\lambda}\Lambda_{-(2k-1)} +\gamma_s i)w_{2k-1}-iw_{2k+1}
        =-\frac{i}{2} \left[w_1\mathcal{H}(\overline{w}w_{2k-1})-w\mathcal{H}(\overline{w_1}w_{2k-1}) \right].    
    \end{split}
    \label{eq:w2k-1 equ modify}
\end{align}
In view of the previous section, we use the decompositions
\begin{align*}
    w=Q+\wt\eps=Q+T_1+ \eps ,\quad w_{2k-1}=P_{2k-1}+\eps_{2k-1}.
\end{align*}
Substituting this into \eqref{eq:w equ} and \eqref{eq:w2k-1 equ modify}, and following the computation in Section~4 of \cite{KimKimKwon2024arxiv}, we derive
\begin{align*}
    \eqref{eq:w equ}\approx
    -\left(\frac{\lambda_{s}}{\lambda}+b_1\right)\Lambda Q+\left(\gamma_{s}-\frac{\eta_1}{2}\right)iQ+\text{h.o.t}.
\end{align*}
Thus, we obtain the first modulation laws for $\la,\ga$; 
\begin{align}
	\frac{\lambda_{s}}{\lambda}+b_1=0,\quad\gamma_{s}-\frac{\eta_1}{2}=0. \label{eq:first mod equ}
\end{align}
Moreover, adopting the convention $Q\mathcal{H}(1) = 0$, for $1\leq k\leq L$, we formally deduce
\begin{align*}
	\eqref{eq:w2k-1 equ} \approx& 
	((b_k)_s-b_{k+1}+(2k-\tfrac{1}{2})b_1b_{k}+\tfrac{1}{2}\eta_1\eta_k)(-i)^{k}\tfrac{y}{2}Q
	\\
	&-
	((\eta_k)_s-\eta_{k+1}+(2k-\tfrac{1}{2})b_1\eta_{k}-\tfrac{1}{2}\eta_1b_k)(-i)^{k-1}\tfrac{y}{2}Q+\text{h.o.t},
\end{align*}
with $b_{L+1}, \eta_{L+1}\equiv 0$. Hence, the formal dynamical laws of $b=(b_1,\dots,b_L)$ and $\eta=(\eta_1,\dots,\eta_L)$ are given as follows: for $1\le k \le L$,
\begin{align}
    \begin{split}
        (b_k)_s &= b_{k+1}-(2k-\tfrac{1}{2})b_1b_{k}-\tfrac{1}{2}\eta_1\eta_k, \quad b_{L+1}\coloneqq0,
        \\
        (\eta_k)_s&=\eta_{k+1}-(2k-\tfrac{1}{2})b_1\eta_{k}+\tfrac{1}{2}\eta_1b_k, \quad \eta_{L+1}\coloneqq 0.
    \end{split} \label{eq:mod eqn}
\end{align}
The formal laws \eqref{eq:first mod equ} and \eqref{eq:mod eqn} are justified in the rigorous form in Lemma~\ref{lem:mod esti}.

Under the assumption $\eta_k^e(s)\equiv 0$ for all $1\leq k\leq L$, the ODE system \eqref{eq:mod eqn} has $L$ linearly independent solutions. If $b_{j+1}\equiv 0$, $k\leq j\leq L$ for some $k$, this is equivalent to having chosen $k=L$. Thus, we may restrict our attention to the case $b_{L}\not\equiv0$. Then, the system \eqref{eq:mod eqn} has a special solution as follows:
\begin{lem}[Special solutions for the $(b,\eta)$ system]
	The vector of functions 
	\begin{equation}
		b_k^e(s) = \frac{c_k}{s^k},\quad \eta_k^e(s)\equiv 0 \quad\textnormal{for} \quad 1\le k \le L \label{eq:special sol}
	\end{equation}
	solves \eqref{eq:mod eqn} where the sequence $(c_k)_{k=1,\dots,L}$ is given by
	\begin{equation}
		c_1 = \frac{2L}{4L-1},\quad c_{k+1} = -\frac{L-k}{4L-1}c_k,\quad 1\le k \le L. \label{eq:ck}
	\end{equation}
\end{lem}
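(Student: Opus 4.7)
The plan is a direct verification. First, I would observe that once one takes $\eta_k^e(s) \equiv 0$ for all $1 \le k \le L$ (together with the convention $\eta_{L+1} \coloneqq 0$), every term on the right-hand side of the $\eta_k$-equation in \eqref{eq:mod eqn} vanishes, since $\eta_{k+1} = 0$, $\eta_k = 0$, and $\eta_1 = 0$. Hence the $\eta$-equations are trivially satisfied and the content of the lemma reduces to the $b$-equations with $\eta_1 = \eta_k = 0$, namely $(b_k)_s = b_{k+1} - (2k - \tfrac{1}{2}) b_1 b_k$ for $1 \le k \le L$, where $b_{L+1} \coloneqq 0$.

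Next, I would substitute the ansatz $b_k(s) = c_k / s^k$ into this reduced system. Since $(b_k)_s = -k c_k / s^{k+1}$ and $b_1 b_k = c_1 c_k / s^{k+1}$, each equation becomes
\begin{equation*}
  -\frac{k c_k}{s^{k+1}} = \frac{c_{k+1}}{s^{k+1}} - \bigl(2k - \tfrac{1}{2}\bigr) \frac{c_1 c_k}{s^{k+1}},
\end{equation*}
which, after multiplying through by $s^{k+1}$, is equivalent to the purely algebraic recursion
\begin{equation*}
  c_{k+1} = \Bigl[ \bigl(2k - \tfrac{1}{2}\bigr) c_1 - k \Bigr] c_k, \qquad 1 \le k \le L.
\end{equation*}
Substituting $c_1 = \frac{2L}{4L-1}$, a one-line simplification gives
\begin{equation*}
  \bigl(2k - \tfrac{1}{2}\bigr) c_1 - k = \frac{(4k-1)L - k(4L-1)}{4L-1} = -\frac{L-k}{4L-1},
\end{equation*}
which is exactly the recursion \eqref{eq:ck}.

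Finally, I would observe that the terminal condition $b_{L+1} \coloneqq 0$ is automatically compatible with the ansatz, because evaluating \eqref{eq:ck} at $k = L$ yields $c_{L+1} = -\frac{L-L}{4L-1} c_L = 0$. In fact, imposing this terminal condition together with the derived recursion uniquely forces $c_1 = \frac{2L}{4L-1}$, which illuminates the origin of the constant. There is no genuine obstacle in this argument: the proof is a substitution followed by elementary algebra, and the only conceptual point is that the prescribed value of $c_1$ is precisely the one that makes the recursion consistent with the cut-off $b_{L+1} = 0$.
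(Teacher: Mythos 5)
Your proposal is correct and matches the paper's approach: the paper simply asserts that ``a direct calculation verifies this lemma,'' and your substitution of the ansatz into \eqref{eq:mod eqn}, reduction to the algebraic recursion $c_{k+1}=\bigl[(2k-\tfrac{1}{2})c_1-k\bigr]c_k$, and the simplification $(2k-\tfrac{1}{2})c_1-k=-\tfrac{L-k}{4L-1}$ (which the paper itself reuses in the subsequent lemma) is exactly that calculation.
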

A direct calculation verifies this lemma. Moreover, from \eqref{eq:first mod equ}, we have
\begin{align}
	 \lambda\sim s^{-\frac{2L}{4L-1}}, \quad \lambda\sim (b_1^e)^{\frac{2L}{4L-1}},\quad |b_j^e|\sim (b_1^e)^j \quad \text{for}\quad 1\leq j\leq L. \label{eq:b lambda relation sec3}
\end{align}
By undoing the renormalization and reverting $s$ back to $t$, we derive the quantized blow-up rate $\lambda(t)\sim (T-t)^{2L}$.
Based on these observations, we establish the quantized blow-up dynamics by constructing solutions whose modulation parameters closely follow the special ODE solution $(b^e,\eta^e)$.
To simplify the proof, we examine a fluctuation $(U,V)$ around  $(b^e,\eta^e)$. We define $U=(U_1,\dots,U_L)$ and $V=(V_1,\dots,V_L)$ as follows:
\begin{equation}
	U_k(s) \coloneqq s^k (b_k(s)-b_k^e(s)),\quad V_k(s) \coloneqq s^k \eta_k(s) \textnormal{ for } 1\le k \le L. \label{def:UV}
\end{equation}
Then, the fluctuation $(U,V)$ satisfies the lemma below:
\begin{lem}
	Let $(b(s),\eta(s))$ be a solution to \eqref{eq:mod eqn} and $(U,V)$ be defined by \eqref{def:UV}. Then $(U,V)$ solves
	\begin{equation}
		sU_s = \calM_{U} U + O \left( |U|^2+|V|^2 \right),\quad sV_s = \calM_{V} V + O \left( |U|^2+|V|^2 \right) \label{eq:linearization}
	\end{equation}
	where the $L\times L$ matrices $\calM_{U}$ and $\calM_{V}$ have of the form:
	\begin{equation}\label{def:matrix A}
		\calM_{U}=\begin{pmatrix} -\frac{2L+1}{4L-1} & 1 &  & & &   \\ -\frac{7}{2}c_2 & \frac{L-2}{4L-1} & 1 & & (0) &  \\ -\frac{11}{2}c_3 & & \frac{L-3}{4L-1} & 1 & &  \\ \vdots &  & & \ddots & \ddots &   \\ -(2L-\frac{5}{2})c_{L-1} & & (0) & & \frac{1}{4L-1} & 1   \\  -(2L-\frac{1}{2})c_{L} & & & & & 0     \end{pmatrix}
	\end{equation}
	and 
	\begin{equation}
		\calM_{V}=\begin{pmatrix} \frac{2L-1}{4L-1} & 1 &  & & &   \\ \frac{1}{2}c_2 & \frac{L-2}{4L-1} & 1 & & (0) &  \\ \frac{1}{2}c_3 & & \frac{L-3}{4L-1} & 1 & &  \\ \vdots &  & & \ddots & \ddots &   \\ \frac{1}{2}c_{L-1} & & (0) & & \frac{1}{4L-1} & 1   \\  \frac{1}{2}c_{L} & & & & & 0     \end{pmatrix}. \label{def:matrix M}
	\end{equation}
	Moreover, $\calM_{U}$ and $\calM_{V}$ are diagonalizable: $\calM_{U} = P^{-1} D_U P$ and $\calM_{V} = Q^{-1} D_V Q$, where $P$ and $Q$ are invertible matrices, and $D_U$ and $D_V$ are diagonal matrices with
    \begin{equation}\label{eq:diagonalization}
        D_U = \mathrm{diag}\big\{-1, \tfrac{2}{4L-1},\dots,\tfrac{L}{4L-1}  \big\},\quad D_V =  \mathrm{diag}\big\{\tfrac{1}{4L-1},\tfrac{2}{4L-1},\dots,\tfrac{L}{4L-1}  \big\}.
    \end{equation}
\end{lem}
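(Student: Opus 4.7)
The plan has two parts. First I derive the ODE system \eqref{eq:linearization} by direct substitution: inserting $b_k(s)=b_k^e(s)+s^{-k}U_k(s)$ and $\eta_k(s)=s^{-k}V_k(s)$ into \eqref{eq:mod eqn} and multiplying by $s^{k+1}$, the zero-th-order terms cancel because $(b^e,0)$ solves \eqref{eq:mod eqn}, and the cross-coupling $\tfrac12\eta_1\eta_k$ is quadratic because $\eta^e\equiv 0$. Collecting linear contributions yields
\begin{equation*}
s(U_k)_s = U_{k+1} + \bigl[k-(2k-\tfrac{1}{2})c_1\bigr]U_k - (2k-\tfrac{1}{2})c_k U_1 + O(|U|^2+|V|^2),
\end{equation*}
and an analogous equation for $V_k$ in which the first-column coefficient $-(2k-\tfrac{1}{2})c_k$ is replaced by $+c_k/2$. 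Using $c_1=2L/(4L-1)$ the bracket simplifies to $(L-k)/(4L-1)$, and for $k=1$ the diagonal and first-column contributions combine to give the stated $(1,1)$ entries of $\calM_U$ and $\calM_V$.

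For the diagonalization, since the spectra in \eqref{eq:diagonalization} consist of $L$ distinct numbers, it suffices to identify the eigenvalues. I expand $\det(\mu I-\calM_U)$ along the first column. The minor obtained by removing row $k$ and column $1$ decomposes into a $(k-1)\times(k-1)$ lower-triangular block with $-1$'s on the diagonal and an $(L-k)\times(L-k)$ upper-triangular block with $\mu-(L-j)/(4L-1)$ on the diagonal for $j=k+1,\ldots,L$, so its determinant is $(-1)^{k-1}\prod_{j=k+1}^L(\mu-(L-j)/(4L-1))$. Setting $\nu:=(4L-1)\mu$ and $\nu^{\underline{n}}:=\nu(\nu-1)\cdots(\nu-n+1)$, one obtains the closed form
\begin{equation*}
(4L-1)^L\det(\mu I-\calM_U) = \nu^{\underline{L}} + L!\sum_{k=1}^L \frac{(4k-1)(-1)^{k-1}}{(L-k)!}\nu^{\underline{L-k}}.
\end{equation*}
I then verify that this equals $(\nu+4L-1)\prod_{j=2}^L(\nu-j)$ by checking the $L$ claimed roots $\{-(4L-1),2,3,\ldots,L\}$ and matching the leading coefficient $\nu^L$. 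At $\nu=j$ with $2\le j\le L$, the change of summation index $m=k-L+j$ reduces the sum to a linear combination of $\sum_{m=0}^j\binom{j}{m}(-1)^m$ and $\sum_{m=0}^j m\binom{j}{m}(-1)^m$, both vanishing for $j\ge 2$; the verification at $\nu=-(4L-1)$ proceeds analogously. The parallel computation for $\calM_V$, with $(2k-\tfrac{1}{2})c_k$ replaced by $-c_k/2$, produces the factorization $\prod_{j=1}^L(\nu-j)$.

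The main obstacle is arriving at the closed-form characteristic polynomial and executing the binomial cancellations—otherwise the argument is purely algebraic. As a cross-check independent of this computation, the $\mu=-1$ eigenvector of $\calM_U$ is $(c_1,2c_2,\ldots,Lc_L)$, obtained by differentiating the time-translated family $b_k^e(s+s_0)=c_k/(s+s_0)^k$ at $s_0=0$; this confirms that the $\mu=-1$ direction corresponds to the time-translation mode.
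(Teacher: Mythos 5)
Your derivation of the matrices $\calM_U$, $\calM_V$ in the first half is exactly the paper's computation: substitute $b_k = b_k^e + s^{-k}U_k$, $\eta_k = s^{-k}V_k$, cancel the leading order using the fact that $(b^e,0)$ solves the system, and simplify the diagonal coefficient $k-(2k-\tfrac12)c_1$ to $\tfrac{L-k}{4L-1}$ via $c_1 = \tfrac{2L}{4L-1}$. This part is correct and identical in spirit to the paper.

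For the diagonalization, you and the paper diverge: the paper simply cites Lemma~3.7 of Merle--Rapha\"el--Rodnianski (where this companion-type matrix was already diagonalized, with $\alpha=\tfrac12$) and says nothing more, while you attempt a self-contained computation of the characteristic polynomial. Your cofactor expansion is valid (the minor $M_{k1}$ does decompose into a $(k-1)\times(k-1)$ lower-triangular block with $-1$'s and an $(L-k)\times(L-k)$ upper-triangular block), and the closed form
\[
(4L-1)^L\det(\mu I-\calM_U) = \nu^{\underline{L}} + L!\sum_{k=1}^L \frac{(4k-1)(-1)^{k-1}}{(L-k)!}\nu^{\underline{L-k}}
\]
is correct (using $c_k = \tfrac{2L!(-1)^{k-1}}{(4L-1)^k(L-k)!}$). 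However, there is a real gap in the way you close the verification. The check at $\nu=j$ for $2\le j\le L-1$ reduces cleanly to the vanishing of $\sum_m (-1)^m\binom{j}{m}$ and $\sum_m m(-1)^m\binom{j}{m}$, but at $j=L$ the term $\nu^{\underline{L}}=L!\neq 0$ does not drop out and the reindexed sum starts at $m=1$ rather than $m=0$; these effects cancel, but your argument does not address it. More seriously, the claim that ``the verification at $\nu=-(4L-1)$ proceeds analogously'' is not justified: that root check does not reduce to alternating binomial sums at all but to a hockey-stick--type identity (e.g.\ $4\binom{m+L+1}{L-1}-\binom{m+L}{L-1}=\binom{m+L}{L}$ with $m=4L-2$), which is a different mechanism that you have not supplied. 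Either carry out that computation explicitly, or replace the check at $\nu=-(4L-1)$ by matching the constant term (you have already fixed $L-1$ roots and the leading coefficient, so one more constraint pins the polynomial down), or, most elegantly, use the Vandermonde/Chu identity for falling factorials, $(\nu-a)^{\underline{n}} = \sum_k\binom{n}{k}(-a)^{\underline{k}}\nu^{\underline{n-k}}$: writing the target as $(\nu-1)^{\underline{L}} + 4L(\nu-2)^{\underline{L-1}}$ for $\calM_U$ and $(\nu-1)^{\underline{L}}$ for $\calM_V$, this expansion reproduces your closed form term-by-term with no need to check roots individually. Finally, your eigenvector cross-check $(c_1,2c_2,\dots,Lc_L)$ for $\mu=-1$ as the time-translation mode is correct and a useful sanity check, but it does not substitute for a full spectral computation.
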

\begin{proof}
	We observe the relation 
	\[(2k-\tfrac{1}{2})c_1-k =\tfrac{(4k-1)L}{4L-1} - k  = -\tfrac{L-k}{4L-1}.\]
	Then we obtain \eqref{eq:linearization}, \eqref{def:matrix A} and \eqref{def:matrix M} since
	\begin{align*}
		&(b_k)_s-b_{k+1}+(2k-\tfrac{1}{2})b_1b_{k}+\tfrac{1}{2}\eta_1\eta_k \\
		&= s^{-k-1} \left[ s(U_k)_s-kU_k- U_{k+1} + (2k-\tfrac{1}{2})(c_k U_1+ c_1 U_k) +O(|U|^2+|V|^2) \right]\\
		&=s^{-k-1} \left[ s(U_k)_s + (2k-\tfrac{1}{2})c_k U_1-\tfrac{L-k}{4L-1}U_k- U_{k+1} +O(|U|^2+|V|^2) \right]
	\end{align*}
	and 
	\begin{align*}
		&(\eta_k)_s-\eta_{k+1}+(2k-\tfrac{1}{2})b_1\eta_{k}-\tfrac{1}{2}\eta_1b_k \\
		&= s^{-k-1}\left[ s(V_k)_s-kV_k- V_{k+1} + (2k-\tfrac{1}{2}) c_1 V_k - \tfrac{1}{2}c_kV_1 +O(|U|^2+|V|^2) \right]\\
		&=s^{-k-1}\left[ s(V_k)_s -\tfrac{1}{2}c_k V_1-\tfrac{L-k}{4L-1}V_k- V_{k+1} +O(|U|^2+|V|^2) \right].
	\end{align*}
	$D_U$ in \eqref{eq:diagonalization} is obtained by substituting $\alpha=\frac{1}{2}$ for the result of Lemma 3.7 in \cite{MerleRaphaelRodnianski2015CambJMath}. $D_V$ in \eqref{eq:diagonalization} can also be derived by borrowing \textbf{step 2} from the proof of Lemma 3.7 in \cite{MerleRaphaelRodnianski2015CambJMath}.
\end{proof}
To make use of the simplicity provided by diagonalization, we will use the transformed fluctuations $(\calU,\calV)\coloneqq (PU, QV)$ in the next section.
\begin{rem}
    The linearized dynamics for the special solution $(b^e,\eta^e)$ \eqref{eq:linearization} represents $2L-1$ positive eigenvalues. This implies that this quantized blow-up solution has $2L-1$ unstable directions, which requires restrictions on our initial data.
\end{rem}
\begin{rem}[Formal instability mechanism]\label{rem:rotational instab}
    The system \eqref{eq:first mod equ} and \eqref{eq:mod eqn} can be solved explicitly as follows: the scaling and phase rotation parameters $(\lmb(t),\gmm(t))$ are given by 
    \begin{align*}
        \lambda(t)^{1/2}e^{-i\gamma(t)}&=\lambda_0^{1/2}e^{-i\gamma_0}\left(1-\frac{1}{2}\sum_{k=1}^L \frac{b_{k}(0)}{k!} \cdot\left(\frac{t}{\lambda_0^2}\right)^k -  \frac{i}{2}\sum_{k=1}^L \frac{\eta_{k}(0)}{k!} \cdot\left(\frac{t}{\lambda_0^2}\right)^k\right).
    \end{align*}
    In particular, the dynamics in the vicinity of our blow-up scenario can be represented by 
    \begin{align*}
        \lambda(t)^{1/2}e^{-i\gamma(t)}=z_L\left(T-t\right)^L+ z_0+\sum_{k=1}^{L-1}z_{k}\left(T-t\right)^{k} 
    \end{align*}
    for some complex-valued initial modulation parameters $(z_0,\dots,z_{L-1})\in \bbC^L$ and $z_L\in \bbC\setminus \{0\}$. We can fix $z_L=1$ by the scaling and the phase rotation invariance. Then, there are $2L$ degrees of freedom of $(z_0,\dots,z_{L-1})$; however, one degree of freedom for $z_0$ can be eliminated by shifting the blow-up time $T$.
    Thus, the degrees of freedom of these parameters intuitively illustrate the $2L-1$ codimension stability discussed above.

    In fact, for any $T>0$ and any collection of real-valued parameters $(c_1,\dots,c_{L-1})\times (d_0,\dots,d_{L-1}) \in \mathbb{R}^{2L-1}$, there exist $\lambda_0,\gamma_0,b_k(0),\eta_k(0)$ such that
    \begin{align}
        \lambda(t)^{1/2}e^{-i\gamma(t)}=\left(T-t\right)^L+id_0+ \sum_{k=1}^{L-1}(c_k+id_k)\left(T-t\right)^{k}. \label{eq:lmb-gmm-formula-1}
    \end{align}
    This shows that the freedom associated with the real part of $z_0$ is transferred to the choice of time $T$. Moreover, the formula \eqref{eq:lmb-gmm-formula-1} implies that the global solutions are generic when $d_0\neq 0$. Now, we describe the instability mechanism. We denote $c_0=0$. For fixed $0\leq j\leq L-1$, we assume $(c_j,d_j)\neq0$ and $(c_i,d_i)=0$ for $i\neq j$.
    \begin{equation}
        \begin{aligned}
        \lmb(t) & =(T-t)^{2j}[\{(T-t)^{(L-j)}+c_j\}^2+d_j^2],\\
        \gmm(t) & =\begin{cases}
        (1-\sgn\{(T-t)^j\})\frac{1}{2}\pi & \text{if }d_j=0,\\
        {\displaystyle \sgn\{d_j(T-t)^j\}\Big\{\frac{\pi}{2}-\arctan\Big(\frac{(T-t)^{L-j}+c_j}{|d_j|}\Big)\Big\}} & \text{if }d_j\neq0.
        \end{cases}
        \end{aligned}
        \label{eq:lmb-gmm-formula-2}
    \end{equation}
    where $\sgn(x)$ denotes the sign of $x$. If $j=0$, the formula \eqref{eq:lmb-gmm-formula-2} says that the solution is global. In addition, for small $d_0$, the phase rotation parameter $\gamma(t)$ rapidly changes in a short time scale $|T-t|\lesssim |d_0|$. This is the rotational instability as illustrated in the introduction. An important point is that the mechanism of rotational instability differs depending on whether $L$ is odd or even. When $L$ is odd, the mechanism is similar to that observed in \cite{KimKimKwon2024arxiv, KimKwon2019, Kim2022CSSrigidityArxiv}. However, for even $L$, while the abrupt change over a short time scale remains the same, the behavior differs in the final state. It first rotates by $\pi/2$, and then, rather than continuing to rotate another $\pi/2$, it reverses back to its original state.
    
    We also demonstrate other instability mechanisms for $1\leq j\leq L-1$. If we set $d_j=0$ in \eqref{eq:lmb-gmm-formula-2}, then the instability mechanism driven by $c_j$ results in a quantized blow-up with a rate of $(T-t)^{2j}$. This mechanism also appears in other quantized blow-up solutions such as \cite{RaphaelSchweyer2014AnalPDEHeatQuantized, Jeong2024arxiv}. On the other hand, if $c_j=0$, the mechanism driven by $d_j$ is more complicated. For $d_j\neq 0$, the corresponding solutions exhibit the quantized blow-up rate of $(T-t)^{2j}$. Simultaneously, their phase parameters rapidly change by $\pi/2$ near the blow-up time $T$, indicating that the instability associated with $d_j$ is another rotational instability. Therefore, there are exactly $L$ distinct types of rotational instabilities, including one induced by $d_0$.

\end{rem}

\section{Trapped regime}\label{sec:trapped regime}
To observe the quantized blow-up dynamics, we need to construct solutions whose modulation parameters align with the special solution \eqref{eq:special sol}. This requires a bootstrapping argument on the transformed fluctuations introduced earlier, as well as information on higher-order energies to close the bootstrap. We obtain estimates for these higher-order energies by using the hierarchy of conservation laws. 
This approach removes the need for repulsivity or monotonicity information. It enables us to employ the coercivity of $A_Q$ instead of that of the higher-order linearized operator, thereby simplifying the argument. We proceed with our analysis using the nonlinear variable $w_{k}$ as defined in \eqref{eq:wk def}. Since various modulation parameters, such as $(\wt b_1,\wt \eta_1)$ and $(b_k,\eta_k)$, appear in this section, we provide an overview in Remark \ref{rem:overview parameters} for clarity.

\subsection{Initial data}
In this subsection, we will specifically describe the initial data settings for the bootstrap process. Since we use the nonlinear variables $w_k$, a nonlinear version of the initial data condition is required. Thus, the main goal of this subsection is to formulate this condition. 

We denote the soliton tube $\mathcal{O}_{dec}$ for a small
parameter $\delta_{dec}>0$ to be determined later:
\begin{align*}
	\mathcal{O}_{dec}\coloneqq\{[Q+\eps_0^\prime]_{\mathring\lambda_{0},\mathring\gamma_{0}}(x)\in L^{2}_{e}:(\mathring\lambda_{0},\mathring\gamma_{0})\in\R_{+}\times\R/2\pi\Z,\,\|\eps_0^\prime\|_{L^{2}}<\delta_{dec}\}.
\end{align*}
By a Gram--Schmidt process, we can find $\calZ_k\in C^\infty_c$, $k=1,2,3,4$ such that
\begin{align}
    (\calK_j,\calZ_k)_r=\delta_{jk},\quad
    (iy^{2m+2}Q,\calZ_k)_r=(y^{2m}(1+y^2)Q,\calZ_k)_r=0 \label{eq:transversal}
\end{align}
for $1\leq m\leq L-1$.
We review the decomposition lemma, which was proved in \cite{KimKimKwon2024arxiv}. We denote by $\wt\calZ^\perp$ and $\calZ^\perp$
\[ \wt\calZ^\perp\coloneqq L^2_e \cap \{ \calZ_1,\calZ_2\}^{\perp}, \quad \calZ^\perp\coloneqq \langle y\rangle^2L^2_e \cap \{ \calZ_1,\calZ_2,\calZ_3,\calZ_4\}^{\perp}.\]

Now, we describe the initial data set. To do this, we will use the transformed fluctuation for initial data, $(\mathring \calU, \mathring \calV)=(\mathring \calU_1,\cdots,\mathring \calU_L, \mathring \calV_1,\cdots \mathring \calV_L)$, defined by
\begin{align}
\begin{gathered}
    (\mathring \calU, \mathring \calV)=(P\mathring U, Q\mathring V),
    \\
    \mathring U_k(s_0) \coloneqq s_0^k (\mathring b_k(s_0)-b_k^e(s_0)),\quad \mathring V_k(s_0) \coloneqq s_0^k \mathring \eta_k(s_0),\quad 1\le k \le L,
\end{gathered} \label{eq:calUV def}
\end{align}
where $b_k^e$ and $\eta_k^e$ are given by \eqref{eq:special sol}, and $\mathring b_k$ and $\mathring \eta_k$ are the initial modulation parameters appearing in \eqref{eq:init form} below. Thus, $(\mathring \calU,\mathring \calV)$ measures the deviation of the initial modulation parameters from the special solution.

We fix a small number $0<\kappa<\frac{c_1}{16L}=\frac{1}{8(4L-1)}$. We remark the definition of profiles $T_{2k-1}$ in \eqref{eq:T_k}. We define the initial data set $\calO_{init}$ by a collection of functions of the following forms
\begin{align}
    v_0=
    \left[
    Q+\sum_{j=1}^LT_{2j-1}(\mathring b_j, \mathring \eta_j)\chi_{\mathring\delta\mathring\la_{0}^{-1}}
    +\mathring{\eps}_{0}
    \right]_{\mathring\lambda_{0},\mathring\gamma_{0}}(x). \label{eq:init form}
\end{align}
which satisfies the following;
\begin{enumerate}
	\item $\mathring\lambda$ and $\mathring b_1$ relation:
	\begin{align}
		\frac{1}{1.1}\leq \frac{\mathring b_1^L}{\mathring\lambda^{2L-\frac{1}{2}}}(s_0) \leq 1.1 \label{eq:init lambda b relation aux}
	\end{align}

	\item Smallness of the stable mode:
	\begin{align}
		|\mathring\calU_1(s_0)|\leq s_0^{-\kappa}. \label{eq:init stable aux}
	\end{align} 

    \item Smallness of the unstable modes:
	\begin{align}
		s_0^\kappa( \mathring \calU_2(s_0),\cdots,\mathring\calU_L(s_0), \mathring\calV_1(s_0),\cdots,\mathring\calV_L(s_0))\in \calB^{2L-1} \label{eq:init unstable aux}
	\end{align}

    \item Smallness of the initial perturbation: $\mathring\eps_{0}\in\calZ^\perp\cap H^{2L}_e$ and
    \begin{align}
		\|\mathring\eps_{0}\|_{H^{2L}_e}\leq \mathring\lambda_0^{10L}. \label{eq:init radiation aux}
	\end{align} 
\end{enumerate}
We only use the $H^{2L}$-smallness in \eqref{eq:init radiation aux}. Thus, one may choose $\mathring\eps_0 \in \calZ^\perp \cap H^\infty_e$.

From \eqref{eq:b lambda relation sec3}, for $1\leq j\leq L$, we have
\begin{align}
	\mathring\lambda(s_0)\sim \mathring b_1(s_0)^{\frac{L}{2L-\frac{1}{2}}},\quad |\mathring b_j(s_0)|\sim \mathring b_1(s_0)^j,\quad |\mathring \eta_j(s_0)|\lesssim \mathring b_1(s_0)^{j+\kappa}. \label{eq:init b hat lambda hat relation}
\end{align}
Recalling $v_0=v(0)$ and $w(s_0)$ and $w_k(s_0)$ defined by \eqref{eq:renormal} and \eqref{eq:wk def}, respectively, we have
\begin{equation}
    \begin{aligned}
        \|w(s_0)-Q\|_{L^2}&\lesssim
        \mathring\delta^{\frac{3}{2}}\mathring\lambda^{\frac{1}{2L}}+\mathring\delta^{2L-\frac{1}{2}}+\|\mathring \eps_0\|_{L^2}\lesssim \mathring\delta^{\frac{3}{2}}\mathring\lambda^{\frac{1}{2L}}+\mathring\delta^{2L-\frac{1}{2}}+ \mathring\lambda_0^{10L}.
    \end{aligned} \label{eq:init energy aux}
\end{equation}
Here $\mathring \lambda_0=\mathring \lambda(s_0)$, and $\mathring\delta>0$ is a cut-off parameter, to be chosen sufficiently small later according to the decomposition lemma below.
We remark that control of $\|v_{k}(s_0)\|_{L^2}$ is not required. It suffices to know $\|v_{k}(s_0)\|_{L^2}<\infty$, which follows directly from $\mathring\eps_{0}\in H^{2L}$. Now, we introduce a lemma to obtain a rough decomposition for $w$.
\begin{lem}[Decomposition, \cite{KimKimKwon2024arxiv}]
	\label{lem:decomposition} There exist 
    small parameters $\delta',\delta'',\delta_{dec}>0$
	such that the following hold. 
	\begin{enumerate}
		\item[(1)] (The decomposition on $L^{2}_{e}$) There exists a map $(\mathbf{G},\wt{\eps}):\mathcal{O}_{dec}\to\mathbb{R}_{+}\times\mathbb{R}/2\pi\mathbb{Z}\times\wt{\mathcal{Z}}^{\perp}$
		satisfying 
		\begin{align*}
			v=[Q+\widehat{\eps}]_{\lambda,\gamma},\quad(\widehat{\eps},\mathcal{Z}_{k})_{r}=0\quad\text{for}\quad k=1,2
		\end{align*}
		with $\textnormal{\textbf{G}}=(\lambda,\gamma)$, and $\|\widehat{\eps}\|_{L^{2}_{e}}<\delta^{\prime}$. 
		\item[(2)] (The decomposition on $\langle y\rangle^{2}L^{2}_{e}$) Let $(\mathbf{G},\wt{\eps})$
		be the map obtained in (1). Then, there is a further decomposition
		map $(\mathbf{G},\mathbf{H},\eps):\mathcal{O}_{dec}\to\mathbb{R}_{+}\times\mathbb{R}/2\pi\mathbb{Z}\times\R^{2}\times\mathcal{Z}^{\perp}$
		satisfying 
		\begin{gather*}
			v=[Q+T_{1}( \wt b_1,  \wt \eta_1)+ \eps ]_{\lambda,\gamma},\quad T_1( \wt b_1, \wt\eta_1)+ \eps =\wt{\eps},
            \\
            ( \eps ,\mathcal{Z}_{k})_{r}=0 \quad\text{for}\quad k=1,2,3,4
		\end{gather*}
		with $\mathbf{G}=(\lambda,\gamma)$, $\mathbf{H}=( \wt  b_1, \wt \eta_1)$, and $\|\langle y\rangle^{-2} \eps \|_{L^{2}}<\delta^{\prime\prime}$.
		\item[(3)] ($C^{1}$-regularity) The map $v\mapsto(\lambda,\gamma,  \wt b_1,  \wt \eta_1)$
		for each decomposition is $C^{1}$. 
	\end{enumerate}
\end{lem}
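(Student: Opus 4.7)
The proof follows a standard modulation-theoretic argument based on the implicit function theorem (IFT), applied in two successive stages; the Gram--Schmidt production of the test functions $\mathcal{Z}_1,\dots,\mathcal{Z}_4 \in C^\infty_c$ realizing \eqref{eq:transversal} is a preliminary density step (the finite collection $\{\mathcal{K}_j\}_{j=1}^4 \cup \{iy^{2m+2}Q,\, y^{2m}(1+y^2)Q : 1 \le m \le L-1\}$ is linearly independent in an appropriate weighted dual, so compactly supported smooth duals exist simultaneously realizing all stated orthogonalities and biorthogonalities).

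For part (1), the plan is to introduce
\begin{equation*}
    F_1(v,\lambda,\gamma) \coloneqq \Bigl(\bigl([v]_{\lambda,\gamma}^{-1} - Q,\, \mathcal{Z}_k\bigr)_r\Bigr)_{k=1,2},
\end{equation*}
where $[v]_{\lambda,\gamma}^{-1}(y) = \lambda^{1/2} e^{-i\gamma} v(\lambda y)$. At the reference point $v_\ast = [Q]_{\mathring\lambda_0, \mathring\gamma_0}$ with $(\lambda,\gamma) = (\mathring\lambda_0, \mathring\gamma_0)$, the map $F_1$ vanishes; from $\partial_\lambda [v]_{\lambda,\gamma}^{-1}\bigr|_\ast = \mathring\lambda_0^{-1}\Lambda Q = \mathring\lambda_0^{-1}\mathcal{K}_1$ and $\partial_\gamma [v]_{\lambda,\gamma}^{-1}\bigr|_\ast = -iQ = -\mathcal{K}_2$, the biorthogonality $(\mathcal{K}_j,\mathcal{Z}_k)_r = \delta_{jk}$ reduces the Jacobian to
\begin{equation*}
    D_{(\lambda,\gamma)} F_1\bigr|_\ast = \mathrm{diag}\bigl(\mathring\lambda_0^{-1},\,-1\bigr),
\end{equation*}
which is invertible. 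The IFT then yields the unique $C^1$ map $v \mapsto (\mathbf{G},\widehat\eps)$ on a small neighborhood of the soliton orbit, and shrinking $\delta_{dec}$ delivers $\|\widehat\eps\|_{L^2} < \delta'$.

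For part (2), starting from the decomposition $v = [Q + \widehat\eps]_{\lambda,\gamma}$ obtained in (1), I would solve in $(\tilde b_1, \tilde\eta_1)$ the system
\begin{equation*}
    F_2(\widehat\eps,\tilde b_1, \tilde\eta_1) \coloneqq \Bigl(\bigl(\widehat\eps - T_1(\tilde b_1, \tilde\eta_1),\, \mathcal{Z}_k\bigr)_r\Bigr)_{k=3,4}.
\end{equation*}
Since $T_1(\tilde b_1, \tilde\eta_1) = -\tfrac{1}{4}\tilde b_1 \mathcal{K}_3 + \tfrac{1}{4}\tilde\eta_1 \mathcal{K}_4$, the biorthogonality in \eqref{eq:transversal} gives $D_{(\tilde b_1, \tilde\eta_1)} F_2\bigr|_{0} = \tfrac{1}{4}\mathrm{diag}(1,-1)$, again invertible; IFT produces the refined decomposition with $(\eps, \mathcal{Z}_k)_r = 0$ for $k=1,\dots,4$ and the $C^1$ regularity claimed in (3). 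The main obstacle worth flagging is that $T_1(\tilde b_1, \tilde\eta_1)$ is generally \emph{not} in $L^2$, since $y^2 Q \notin L^2$, so the splitting $\widehat\eps = T_1 + \eps$ is only genuinely meaningful in the weighted topology $\langle y\rangle^2 L^2_e$; this is precisely why the second decomposition is carried out under the stronger pre-assumption $\|\langle y\rangle^{-2}\widehat\eps\|_{L^2} < \delta''$, and why the paper adopts the ``changing decompositions depending on topologies'' viewpoint. The scalar pairings $(\widehat\eps,\mathcal{Z}_k)_r$ themselves remain unambiguous throughout, because the $\mathcal{Z}_k$ are compactly supported, so no further analytical difficulty arises and the IFT argument goes through as above.
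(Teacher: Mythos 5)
Your proof is correct and is the standard modulation-by-IFT argument that the paper delegates to the Appendix of \cite{KimKimKwon2024arxiv}: the Jacobian computations for $F_1$ and $F_2$ check out, and since $T_1 = -\tfrac{1}{4}\wt b_1\calK_3 + \tfrac{1}{4}\wt\eta_1\calK_4$ is linear in the parameters, the second stage is in fact an explicitly solvable $2\times 2$ linear system rather than a genuine IFT step. One small point worth stating explicitly: the orthogonality $(\eps,\calZ_k)_r=0$ for $k=1,2$ carries over automatically from $(\widehat\eps,\calZ_k)_r=0$ because $T_1\in\textnormal{span}_{\bbR}\{\calK_3,\calK_4\}$ and $(\calK_j,\calZ_k)_r=\delta_{jk}$, so the two constraints $k=3,4$ imposed by $F_2$ are exactly what is needed to land in $\calZ^\perp$.
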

A proof of an analogous lemma can be found in the Appendix of \cite{KimKimKwon2024arxiv}. Notably, if $v\in H^{2L}_e\cap \calO_{dec}$, then $\wt\eps(v)\in H^{2L}_{e}$ and $\eps (v) \in \dot\calH^{2}_{e}\cap \dot\calH^{2L}_{e}$.

In view of \eqref{eq:init energy aux}, taking $\mathring \delta$ and $\mathring \lambda(s_0)$ sufficiently small, we have $\calO_{init}\subset \calO_{dec}$. Therefore, we can apply Lemma \ref{lem:decomposition} to $\calO_{init}$. Moreover, since $\mathring\eps_0\in \calZ^\perp\cap H^{2L}_{e}$ and $\calZ_k$ satisfy \eqref{eq:transversal} and are compactly supported, we obtain that
\begin{align*}
    \sum_{j=1}^LT_{2j-1}(\mathring b_j, \mathring \eta_j)\chi_{\mathring\delta\mathring\la_{0}^{-1}}+\mathring{\eps}_{0}\in \wt\calZ^\perp,
    \quad \sum_{j=2}^LT_{2j-1}(\mathring b_j, \mathring \eta_j)\chi_{\mathring\delta\mathring\la_{0}^{-1}}+\mathring{\eps}_{0}\in \calZ^\perp
\end{align*}
by taking $\mathring\delta\mathring\lambda_0^{-1}\gg 1$. This means that $\mathring\lambda(s_0),\mathring\gamma(s_0),\mathring b_1(s_0), \mathring \eta_1(s_0)$ do not change even if Lemma \ref{lem:decomposition} is applied. In other words, we have
\begin{align}
    \begin{split}
      (\mathring\lambda(s_0),\mathring\gamma(s_0),\mathring b_1(s_0), \mathring \eta_1(s_0))=
      (\lambda(s_0),\gamma(s_0),\wt  b_1(s_0), \wt \eta_1(s_0)),
      \\
      \wt\eps(s_0)=\sum_{j=1}^LT_{2j-1}(\mathring b_j, \mathring  \eta_j)\chi_{\mathring\delta\mathring\la_{0}^{-1}}+\mathring{\eps}_{0},\quad
       \eps (s_0)=\wt\eps(s_0)-T_1(\mathring b_1, \mathring  \eta_1),
    \end{split}\label{eq:init decompose}
\end{align}
where $(\lambda(s_0),\gamma(s_0), \wt b_1(s_0), \wt \eta_1(s_0),\wt\eps(s_0), \eps (s_0))$ is defined by Lemma \ref{lem:decomposition}. 
\begin{rem}[Parameter dependence]
    For the reader's convenience, we clarify the dependencies of the parameters. We assume the following size conditions for the parameters:
    \begin{align*}
        0<s_0^{-\kappa}\ll1, \quad 0< \mathring\delta^{\frac{3}{2}}s_0^{-\frac{1}{4L-1}}\ll\delta_{dec},
        \quad \mathring\delta^{2L-\frac{1}{2}}\ll \delta_{dec}, \quad \max_{1\leq k \leq L}C_{k,\mathring \delta}\lambda(s_0)^{\frac{1}{8L}}\ll 1,
    \end{align*}
    where $C_{k,\mathring \delta}$ appears in \eqref{eq:proxi pf const}.
    The constant $0<\kappa<\frac{1}{8(4L-1)}$ was already fixed. This choice of $\kappa$ ensures the proximity for $\eta$ by \eqref{eq:mod para sim}. In view of Section \ref{sec:dynamical laws} with \eqref{eq:init unstable aux} and \eqref{eq:init stable aux}, the first condition ensures that $\mathring b_1(s_0)\sim s_0^{-1}\sim \lambda(s_0)^{\frac{4L-1}{2L}}$. Moreover, the second and third conditions imply $\mathcal{O}_{init}\subset \calO_{dec}$ by \eqref{eq:init energy aux}. The last condition is imposed to absorb the implicit constants in the proof of Proposition~\ref{prop:proximity init}. In addition, this also yields $\mathring\delta\lambda_0^{-1}\gg 1$.
\end{rem}

As in the previous works on (CSS) \cite{KimKwon2020blowup, KimKwonOh2020blowup, Kim2022CSSrigidityArxiv}, we define modulation parameters $b_k$ and $\eta_k$ through nonlinear variables $w_k$. Additionally, following the method in \cite{KimKimKwon2024arxiv}, we do not introduce a cut-off in the profile decomposition. We define $b_k$ and $\eta_k$ by
\begin{align}
    b_k\coloneqq \frac{(w_{2k-1},\frac{1}{2}(-i)^kyQ\chi)_r}{(\frac{1}{2}yQ,\frac{1}{2}yQ\chi)_r},
    \quad \eta_k\coloneqq \frac{(w_{2k-1},-\frac{1}{2}(-i)^{k-1}yQ\chi)_r}{(\frac{1}{2}yQ,\frac{1}{2}yQ\chi)_r}.\label{eq:bk def}
\end{align}
Then, we have decompositions for $w_{2k-1}$ such that
\begin{align}
    w_{2k-1}=P_{2k-1}(b_k,\eta_k)+\eps_{2k-1},\quad (\eps_{2k-1},yQ\chi)_r=(\eps_{2k-1},iyQ\chi)_r=0 \label{eq:w2k-1 decompose}
\end{align}
where $P_{2k-1}$ is given by \eqref{eq:P profile}. The choice of $P_{2k-1}$ is motivated from Lemma \ref{lem:profile choosing}, which shows that the profiles $T_{2k-1}$ \eqref{eq:T_k} for $w$ can be mapped to the profiles $P_{2k-1}$ \eqref{eq:P profile} for $w_{2k-1}$. Remarkably, $B_QP_{2k-1}$ is the same as the second equation of \eqref{eq:higher-order profile}. This implies that one may replace the new modulation parameters $b_k$ and $\eta_k$ by the initial parameters $\mathring b_k$ and $\mathring \eta_k$, and vice versa, up to a manageable error.

\begin{prop}[Proximity for initial modulation parameters]\label{prop:proximity init} Let $b_k$ and $\eta_k$ are given by \eqref{eq:bk def}. For $1\leq k\leq L$, we have
    \begin{align}
        |\mathring b_k(s_0)-b_k(s_0)|+|\mathring \eta_k(s_0)-\eta_k(s_0)|\lesssim |\mathring b_1(s_0)|^k \lambda(s_0)^{\frac{1}{8L}}. \label{eq:mod para sim}
    \end{align}
\end{prop}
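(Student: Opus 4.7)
The plan is to substitute the explicit form \eqref{eq:init form} of $v_0$ into the definitions \eqref{eq:bk def}, extract a main term that is exactly $\mathring b_k$ (resp.\ $\mathring\eta_k$) times the normalizing denominator, and bound the residual by the right-hand side of \eqref{eq:mod para sim}.

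Writing $\wt\eps_0 \coloneqq w(s_0)-Q = \sum_{j=1}^L T_{2j-1}(\mathring b_j,\mathring\eta_j)\chi_R + \mathring\eps_0$ with $R = \mathring\delta\mathring\la_0^{-1}$, I would first Taylor expand the multilinear map $v\mapsto \td\bfD_v^{2k-1}v$ around $Q$ as
\[ w_{2k-1}(s_0) = \td\bfD_Q^{2k-2}L_Q\wt\eps_0 + \mathcal{N}_{2k-1}(\wt\eps_0), \]
where $\mathcal{N}_{2k-1}$ is at least quadratic in $\wt\eps_0$. Using $\td\bfD_Q = B_Q^*\partial_y B_Q$ and $B_QB_Q^*=I$ from Proposition \ref{prop:AQ-BQ definition} together with \eqref{eq:BQLQ and calL}, the linear part is $B_Q^*\calL_{2k-1}\wt\eps_0$. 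Lemma \ref{lem:profile choosing} then decomposes $\calL_{2k-1}\wt\eps_0 = B_QP_{2k-1}(\mathring b_k,\mathring\eta_k) + \mathcal{E}$, where $\mathcal{E}$ collects the cutoff residues $-\calL_{2k-1}(T_{2j-1}(1-\chi_R))$ for $j\leq k$ controlled by \eqref{eq:local profile estimate1}, the higher-order profile pieces $\calL_{2k-1}(T_{2j-1}\chi_R)$ for $j>k$ controlled by \eqref{eq:local profile estimate2}, and the $H^{2L}$-small remainder $\calL_{2k-1}\mathring\eps_0$ handled by \eqref{eq:init radiation aux}. Applying $B_Q^*$ and noting that $B_Q^*B_QP_{2k-1}=P_{2k-1}$ since the projection correction in \eqref{eq:BQBQstar equal I} vanishes by the oddness of $P_{2k-1}$ against $Q$, we reach
\[ w_{2k-1}(s_0) = P_{2k-1}(\mathring b_k,\mathring\eta_k) + B_Q^*\mathcal{E} + \mathcal{N}_{2k-1}. \]

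Next I would test against $\tfrac{1}{2}(-i)^k yQ\chi$ and $-\tfrac{1}{2}(-i)^{k-1}yQ\chi$. A direct computation using the identity $i^k(-i)^{k-1}=i$ yields
\[ (P_{2k-1}(\mathring b_k,\mathring\eta_k),\tfrac{1}{2}(-i)^k yQ\chi)_r = \mathring b_k\cdot(\tfrac{1}{2}yQ,\tfrac{1}{2}yQ\chi)_r, \]
and symmetrically the imaginary test extracts $\mathring\eta_k$ times the same normalizing factor. After dividing by the denominator in \eqref{eq:bk def}, it remains to bound $|(B_Q^*\mathcal{E},\iota yQ\chi)_r| + |(\mathcal{N}_{2k-1},\iota yQ\chi)_r|$ for $\iota\in\{1,i\}$. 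The first piece is handled by duality $(B_Q^*\mathcal{E},\iota yQ\chi)_r=(\mathcal{E},B_Q(\iota yQ\chi))_r$ and Cauchy--Schwarz with weights $\langle y\rangle^{\pm 1}$; that $\langle y\rangle B_Q(\iota yQ\chi)\in L^2$ follows from $B_Q(yQ)=\sqrt{2}$ together with the $O(|y|^{-2})$ far-field decay of the Hilbert-transform tail of a compactly supported smooth odd function (using that the zero-moment cancels). Summing the weighted bounds and invoking \eqref{eq:init b hat lambda hat relation} produces a total of order $\mathring\la_0^{2k}\sim |\mathring b_k|^{1+1/(4L-1)}$ with implicit constants depending on $\mathring\delta$, as desired.

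The main obstacle will be the nonlinear term $\mathcal{N}_{2k-1}$. Since $\td\bfD_w^{2k-1}w$ is a high-degree multilinear expression in $(w,\bar w)$ mediated by $\calH$ and $\partial_y$, one must unroll the iterated commutator expansion carefully and track how quadratic interactions among two factors $T_{2j-1}\chi_R$ compete with the polynomial growth at scale $R$. The redeeming features are that (i) the radiation $\mathring\eps_0$ contributes at most $\mathring\la_0^{10L}$ in $H^{2L}$ by \eqref{eq:init radiation aux}, and (ii) each quadratic interaction pays an additional factor $\|\wt\eps_0\|$ in the appropriate weighted norm, whose $T$-profile size has already been controlled well below $\mathring\la_0^{2k}$. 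Modulo this bookkeeping, which follows the template of the energy/multilinear estimates in Section~4 of \cite{KimKimKwon2024arxiv}, the argument closes and yields \eqref{eq:mod para sim}.
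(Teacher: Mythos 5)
Your outline follows the paper's structure closely: both of you express $w_{2k-1}(s_0)$ through the linearization $\td\bfD_Q^{2k-2}L_Q\wt\eps_0 = B_Q^*\calL_{2k-1}\wt\eps_0$, invoke Lemma~\ref{lem:profile choosing} (via \eqref{eq:higher-order profile}, \eqref{eq:local profile estimate1}, \eqref{eq:local profile estimate2}) to isolate the leading term $B_QP_{2k-1}(\mathring b_k,\mathring\eta_k)$ plus cutoff/tail residues, test against $\tfrac12(-i)^k yQ\chi$ and $-\tfrac12(-i)^{k-1}yQ\chi$, and finish with the weighted $L^2$ bound $\|\langle y\rangle B_Q(yQ\chi)\|_{L^2}\lesssim 1$ and the size relation \eqref{eq:init b hat lambda hat relation}. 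On the linear side your argument is essentially identical to the paper's.

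The gap is in the piece you yourself flag as ``the main obstacle'': the nonlinear remainder $\calN_{2k-1}=w_{2k-1}(s_0)-\td\bfD_Q^{2k-2}L_Q\wt\eps_0$. You assert that bounding $|(\calN_{2k-1},\iota yQ\chi)_r|$ by $\mathring\lambda_0^{2k}$ is ``bookkeeping following the template of \cite{KimKimKwon2024arxiv},'' but this is not a corollary of any already-cited estimate; it is exactly where the work is. In the paper this is a dedicated statement, Lemma~\ref{lem:init approx}, proved inductively by decomposing
\[
\td\bfD_w = \td\bfD_Q + \tfrac12 Q\calH\ol{\wt\eps}\,(\cdot) + \tfrac12 \wt\eps\calH Q\,(\cdot) + \tfrac12 \wt\eps\calH\ol{\wt\eps}\,(\cdot),
\]
peeling one operator at a time and transferring the error onto higher iterates $\td\bfD_Q^{j}(yQ\chi)$ whose decay is quantified by \eqref{eq:fM decay}--\eqref{eq:D tilde yQ decay}. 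The key point is that one cannot expand $\td\bfD_w^{2k-1}w$ all at once as ``linear $+$ at least quadratic'' and hope that quadratic-and-higher terms are automatically acceptable: the non-locality of $\calH$ and the slow decay of $Q$ force the specific algebraic manoeuvre $\calH(QB_Q^*f)=-\sqrt2 f + yQB_Q^*f$ (the identity \eqref{eq:HQBQ star equ}) to move $Q\calH$ across the pairing before any brute-force estimate can close. Without this step, or a concrete substitute, the term $Q\calH(\ol{\wt\eps}\,w_{k-j})$ in $\calN$ cannot be bounded by $\mathring\lambda_0^{2k}$ by Cauchy--Schwarz alone, and your argument does not close. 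You should supply at least the single-step peeling identity and the $\calH(QB_Q^*f)$ cancellation; as written this remains a genuine hole rather than routine bookkeeping.

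A secondary, smaller remark: $\td\bfD_v^{2k-1}v$ is not a multilinear map in $v$, so phrasing the decomposition as a ``Taylor expansion of the multilinear map'' is imprecise (you correct to ``high-degree multilinear'' later, but the structure is really an iterated composition of affine-in-$v$ maps). This does not change the mathematical content but slightly obscures why the induction on the number of $\td\bfD$'s is the natural mechanism.
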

\begin{rem}[Overview of modulation parameters]\label{rem:overview parameters}
    To avoid confusion from the various quantities appearing in the modulation analysis, we review them. All parameters are well-defined for all times $s\geq s_0$ unless otherwise specified:

    \textbullet $\mathring\lambda(s_0),\mathring\gamma(s_0),\mathring b_k(s_0),\mathring \eta_k(s_0)$: These parameters are introduced in the initial data set $\calO_{init}$, \eqref{eq:init form}. They are only defined at the initial time $s=s_0$.
    \begin{itemize}
        \item $\mathring\lambda(s_0),\mathring\gamma(s_0),\mathring b_k(s_0),\mathring \eta_k(s_0)$, $1\leq k\leq L$: These parameters are introduced in the initial data set $\calO_{init}$, \eqref{eq:init form}. They are only defined at the initial time $s=s_0$.

        \item $\lambda(s),\gamma(s),\wt b_1(s),\wt \eta_1(s)$: These are the decomposition parameters given by Lemma~\ref{lem:decomposition}. They coincide with the corresponding parameters with a ring at the initial time $s=s_0$ by \eqref{eq:init decompose}. That is,
        \begin{equation*}
            (\mathring\lambda(s_0),\mathring\gamma(s_0),\mathring b_1(s_0),\mathring \eta_1(s_0))=(\lambda(s_0),\gamma(s_0),\wt b_1(s_0),\wt \eta_1(s_0)).
        \end{equation*}
        In particular, for the scaling parameter, we write $\lambda(s_0)=\mathring\lambda(s_0)=\lambda_0=\mathring \lambda_0$.
        They are different from $(b_1,\eta_1)$ in general, although \eqref{eq:b1 similar} shows that the two pairs remain close on the bootstrap interval. Note that we do not use $\wt b_k$ and $\wt \eta_k$ for $k \geq 2$.

        \item $b_k(s), \eta_k(s)$, $1\leq k\leq L$: These are time-dependent modulation parameters defined through the higher-order nonlinear variables in \eqref{eq:bk def}. At the initial time $s=s_0$, they are close to the corresponding parameters with a ring in the sense of \eqref{eq:mod para sim}. These are the parameters we will mainly use in our analysis.

        \item $b_k^e(s), \eta_k^e(s)$: These denote the special solution of \eqref{eq:mod eqn} given by \eqref{eq:special sol}. They serve as the reference trajectory for the modulation parameters.

        \item $\td b_L(s), \td \eta_L(s)$: These parameters will be defined in \eqref{eq:def refine mod} to close the bootstrap; see Section \ref{sec:modul esti} for details.
    \end{itemize}
    We also remark that the radiation terms $\wt\eps(s)$ and $ \eps (s)$ are defined by Lemma \ref{lem:decomposition}, and $\eps_{2k-1}(s)$ is defined by \eqref{eq:w2k-1 decompose} using the nonlinear variables.
\end{rem}
Since the definition of $\calO_{init}$ and \eqref{eq:init decompose} do not directly provide information on the nonlinear variables $w_k$, we first state the following approximation lemma and postpone its proof to Section~\ref{sec:tail compute}, where it is obtained through a tail computation for the initial data.
\begin{lem}\label{lem:init approx}
	Let $v_0\in \calO_{init}$, $1\leq k\leq L$, and $\iota=1,i$. For the decomposition \eqref{eq:init decompose}, we have
	\begin{align}
		|(\td \bfD_Q^{2k-2}L_Q\wt\eps(s_0)-w_{2k-1}(s_0),\iota yQ\chi)_r| \lesssim_{k,\mathring \delta}  |\mathring b_1(s_0)|^k \lambda(s_0)^{\frac{1}{4L}}. \label{eq:init approx k any}
	\end{align}
\end{lem}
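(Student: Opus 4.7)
The plan is to linearize the nonlinear map $v\mapsto\widetilde{\mathbf{D}}_v^{j+1}v$ around $v=Q$, identify $\widetilde{\mathbf{D}}_Q^jL_Q\widetilde{\eps}$ as its leading contribution, and estimate the higher-order remainder by testing against the smooth compactly supported function $\iota yQ\chi$.

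First, since $\widetilde{\mathbf{D}}_vv=\mathbf{D}_vv$ and $\mathbf{D}_QQ=0$, the linearization $\mathbf{D}_{Q+\widetilde{\eps}}(Q+\widetilde{\eps})=L_Q\widetilde{\eps}+N_Q(\widetilde{\eps})$ immediately yields $\widetilde{\mathbf{D}}_ww=L_Q\widetilde{\eps}+N_Q(\widetilde{\eps})$. I then split $\widetilde{\mathbf{D}}_w=\widetilde{\mathbf{D}}_Q+E[\widetilde{\eps}]$ with
\[
E[\widetilde{\eps}]f\coloneqq\tfrac12\widetilde{\eps}\mathcal{H}(\overline{w}f)+\tfrac12Q\mathcal{H}(\overline{\widetilde{\eps}}f),
\]
iterate this decomposition $j$ times, and use $\widetilde{\mathbf{D}}_Q^kQ=0$ for every $k\geq 1$ to expand $w_{j+1}=\widetilde{\mathbf{D}}_w^j(\widetilde{\mathbf{D}}_ww)$ into
\[
w_{j+1}=\widetilde{\mathbf{D}}_Q^jL_Q\widetilde{\eps}+\mathcal{R}_j(\widetilde{\eps}),
\]
where $\mathcal{R}_j(\widetilde{\eps})$ is a finite sum of multi-linear expressions in $\widetilde{\eps}$ of order $\geq 2$, built from at most $j$ applications of $\widetilde{\mathbf{D}}_Q$, $L_Q$, Hilbert transforms, and coefficients $Q$ or $\widetilde{\eps}$.

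Second, I pair $\mathcal{R}_j(\widetilde{\eps})$ against $\iota yQ\chi$. Because the test function is smooth and compactly supported, every derivative inside $\mathcal{R}_j$ can be transported onto it by integration by parts, modulo Hilbert-transformed commutators handled through \eqref{eq:CommuteHilbert}--\eqref{eq:CommuteHilbertDerivative}, the $L^2$-boundedness of $\mathcal{H}$, and the compact support of $\chi$. The output is a finite sum of weighted bilinear (or higher) integrals in $\widetilde{\eps}$. The explicit description \eqref{eq:init decompose} together with the size bounds \eqref{eq:init b hat lambda hat relation} and \eqref{eq:init radiation aux} yields, on the support of $\chi$, the pointwise estimate
\[
\|\widetilde{\eps}(s_0)\|_{H^{2L}(\mathrm{supp}\,\chi)}\lesssim\sum_{k=1}^{L}\bigl(|\mathring b_k(s_0)|+|\mathring\eta_k(s_0)|\bigr)+\|\mathring\eps_0\|_{H^{2L}}\lesssim\mathring b_1(s_0)\sim\lambda(s_0)^{(4L-1)/(2L)}.
\]

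The main obstacle is reaching the sharp exponent $\lambda(s_0)^{j+2}$ once $j\geq 2$, since the naive quadratic-in-$\widetilde{\eps}$ estimate only yields $\lambda^{(4L-1)/L}$, which is strictly weaker than $\lambda^{j+2}$ for $j\geq 2$ and $L\geq 2$. The resolution is to expand the bilinears inside $\mathcal{R}_j$ according to the profile decomposition and to exploit the algebraic cancellations encoded in Lemma \ref{lem:profile choosing}, namely $\mathcal{L}_{2k-1}T_{2\ell-1}=0$ for $\ell<k$, combined with the factorization $\widetilde{\mathbf{D}}_Q^jL_Q=B_Q^{*}\mathcal{L}_{j+1}$ (which follows from \eqref{eq:DQ BQ decomp}, \eqref{eq:BQBQstar equal I} and \eqref{eq:BQLQ and calL}), to annihilate those pieces made solely from low-index profiles. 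Every surviving remainder term must then carry either a factor of $\mathring\eps_0$, which enjoys the extra smallness $\mathring\lambda^{10L}$ from \eqref{eq:init radiation aux}, or a high-index profile $T_{2k-1}$ with prefactor $|\mathring b_k|+|\mathring\eta_k|\sim\mathring b_1^k$; in either case the resulting size dominates $\lambda(s_0)^{j+2}$. A multi-index bookkeeping of the contributions to $\mathcal{R}_j$, parallel to the analogous computation carried out in \cite{KimKimKwon2024arxiv}, completes the proof of \eqref{eq:init approx k any}.
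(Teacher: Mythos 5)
The proposal takes a genuinely different route from the paper, but the key step — using the algebraic cancellations of Lemma~\ref{lem:profile choosing} to kill the dangerous quadratic remainder terms — contains a genuine gap. The cancellations $\mathcal{L}_{2k-1}T_{2\ell-1}=0$ and the factorization $\widetilde{\mathbf{D}}_Q^jL_Q=B_Q^{*}\mathcal{L}_{j+1}$ are statements about \emph{linear} operators applied to \emph{single} profile functions. But the remainder $\mathcal{R}_j(\widetilde{\eps})$ that you isolate consists of \emph{multilinear} pieces in $\widetilde{\eps}$ — e.g.\ terms quadratic in the profile $T_1$ of the shape $\tfrac12 Q\mathcal{H}(\overline{\widetilde{\eps}}\cdot(\text{something involving }\widetilde{\eps}))$ — and these do not have the form $\widetilde{\mathbf{D}}_Q^{j}L_Q(\text{profile})$ to which the kernel identities apply. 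Since you pair directly against the fixed, non-decaying test function $\iota yQ\chi$, there is no extra weight to rescue the naive estimate once $j\geq 2$, and the linear cancellations you invoke have no mechanism to operate on the products like $T_1\cdot T_1$ that dominate $\mathcal{R}_j$. The sentence ``every surviving remainder term must then carry ... a high-index profile $T_{2k-1}$ with prefactor ... $\sim\mathring{b}_1^k$; in either case the resulting size dominates $\lambda(s_0)^{j+2}$'' is not justified: for a single high-index profile one has $\mathring{b}_1^k\sim\lambda^{k(2-1/(2L))}$, and this is \emph{larger} than $\lambda^{j+2}$ for the full range $j\leq 2L-1$ unless $k$ exceeds $L$, which does not occur.

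The paper resolves the problem by never expanding into profiles at all. It iteratively transfers one copy of $\widetilde{\mathbf{D}}_Q$ per step from the left factor of the pairing onto the test function, producing the iterated test functions $\widetilde{\mathbf{D}}_Q^j(yQ\chi)$, and then shows via \eqref{eq:BQBQstar equal I}--\eqref{eq:DQ BQ decomp} that $|\widetilde{\mathbf{D}}_Q^j(yQ\chi)|\lesssim Q^{j+2}$. At each step the quadratic remainder is then estimated against this strongly decaying weight:
\begin{align*}
\bigl|(\text{remainder},\widetilde{\mathbf{D}}_Q^j(yQ\chi))_r\bigr|
\lesssim \|Q^{j+2}\widetilde{\eps}\|_{L^2}\,\|w_{k-j}\|_{L^2}
\lesssim \lambda^{j+2}\cdot\lambda^{k-j}=\lambda^{k+2},
\end{align*}
using $\|\widetilde{\eps}\|_{\dot{\mathcal{H}}^{j+2}}\lesssim\lambda^{j+2}$ (which the initial data satisfies) and $\|w_{m}\|_{L^2}\lesssim\lambda^{m}$. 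This decay of the iterated test function is what supplies the extra powers of $\lambda$ that your naive pairing cannot reach; it is an analytic gain, not an algebraic cancellation. Your argument up through the linearization $w_{j+1}=\widetilde{\mathbf{D}}_Q^jL_Q\widetilde{\eps}+\mathcal{R}_j(\widetilde{\eps})$ is fine; the gap is in closing the estimate for $\mathcal{R}_j$.
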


\begin{proof}[Proof of Proposition \ref{prop:proximity init} assuming Lemma~\ref{lem:init approx}]
    Since we do our analysis on the fixed initial time $s_0$, we omit the dependency of time.
    We remark on the decomposition \eqref{eq:init decompose} and $\td \bfD_Q^{2k-2}L_Q=B_Q^*\calL_{2k-1}$ on $H^{2L}$. One can check 
    \begin{align*}
        |\mathring b_{k}-b_{k}|+|\mathring \eta_{k}-\eta_{k}|\sim \sum_{\iota=1,i}|(P_{2k-1}(\mathring b_{k},\mathring \eta_{k})-P_{2k-1}( b_{k}, \eta_{k}),\iota yQ\chi)_r|.
    \end{align*}
    By \eqref{eq:higher-order profile}, we have
    \begin{equation}\label{eq:calL eps decom}
        \begin{aligned}
            B_Q(P_{2k-1}(\mathring b_{k},\mathring \eta_{k}))=& \calL_{2k-1}\wt\eps+\sum_{j=1}^{k}\calL_{2k-1}[T_{2j-1}(\mathring b_j, \mathring \eta_j)(1-\chi_{\mathring\delta\mathring\la^{-1}})]
            \\
            &-\sum_{j=k+1}^L\calL_{2k-1}[T_{2j-1}(\mathring b_j, \mathring \eta_j)\chi_{\mathring\delta\mathring\la^{-1}}]
            -\calL_{2k-1}\mathring{\eps}_{0}.
        \end{aligned}
    \end{equation}
    Moreover, by \eqref{eq:w2k-1 decompose}, we get
    \begin{equation*}
        B_Q(P_{2k-1}(b_{k}, \eta_{k}))=B_Qw_{2k-1}-B_Q\eps_{2k-1}.
    \end{equation*}
    We note that $\calH(yQ\chi)=\langle y\rangle^{-2}\calH(yQ\chi)+y\langle y\rangle^{-2}\calH(y^2Q\chi)$, and thus
    \begin{align}
        \|\langle y\rangle B_Q(yQ\chi)\|_{L^2}\lesssim 1. \label{eq:mod para sim pf1}
    \end{align}
    Taking the inner product of \eqref{eq:calL eps decom} with $B_Q(\iota yQ\chi)$, and then using $B_Q^*B_Q(yQ\chi)=yQ\chi$, \eqref{eq:init approx k any}, \eqref{eq:mod para sim pf1}, \eqref{eq:local profile estimate1}, \eqref{eq:local profile estimate2}, \eqref{eq:init radiation aux}, and \eqref{eq:w2k-1 decompose}, we derive
    \begin{align}\label{eq:proxi pf const}
        |\mathring b_{k}-b_{k}|+|\mathring \eta_{k}-\eta_{k}|
        \lesssim C_{k,\mathring \delta}\bigg(|\mathring b_1|^k \lambda^{\frac{1}{4L}}+\sum_{j=1}^L(|\mathring b_j|+|\mathring \eta_j|)\lambda^{2k+\frac{1}{2}-2j}\bigg)
        \lesssim C_{k,\mathring \delta}|\mathring b_1|^k \lambda^{\frac{1}{4L}}.
    \end{align}
    Thanks to \eqref{eq:init b hat lambda hat relation}, and taking $\lambda$ so that $C_{k,\mathring \delta}\lambda^{\frac{1}{8L}}\ll 1$ for all $1\leq k\leq L$, we conclude \eqref{eq:mod para sim}.
\end{proof}

Now, we establish the conditions for the new modulation parameters $b_k$ and $\eta_k$. While they differ from $\mathring b_k$ and $\mathring \eta_k$ introduced in the initial data \eqref{eq:init form}, we can approximately equal these parameters via \eqref{eq:mod para sim}. This enables us to convert the conditions for $\mathring b_k$ and $\mathring \eta_k$ into corresponding conditions for $b_k$ and $\eta_k$. 

We define $(\calU,\calV)=(\calU(s),\calV(s))$ in the same way as $(\mathring\calU(s_0), \mathring\calV(s_0))$ in \eqref{eq:calUV def}, but without the ring and replacing $s_0$ with $s$. If the initial data $v_0$ satisfies \eqref{eq:init lambda b relation aux}, \eqref{eq:init stable aux}, \eqref{eq:init unstable aux}, and \eqref{eq:init energy aux}, then by \eqref{eq:mod para sim} and taking $\mathring\delta$ sufficiently small, the following hold true:
\begin{enumerate}
	\item $\lambda$ and $b_1$ relation:
	\begin{align}
		\frac{1}{1.2}\leq \frac{b_1^L}{\lambda^{2L-\frac{1}{2}}}(s_0) \leq 1.2 \label{eq:init lambda b relation}
	\end{align}
	
	\item Smallness of the stable mode:
	\begin{align}
		|\calU_1(s_0)|\leq 2s_0^{-\kappa}. \label{eq:init stable} 
	\end{align}

    \item Smallness of the unstable modes:
	\begin{align}
		s_0^\kappa( \calU_2(s_0),\cdots,\calU_L(s_0), \calV_1(s_0),\cdots,\calV_L(s_0))\in 1.01\calB^{2L-1}. \label{eq:init unstable} 
	\end{align}
    
	\item Energy control:
	\begin{align}
		\|\wt\eps(s_0)\|_{L^2}\leq \tfrac{1}{10}\delta_{dec},\quad \|w_k(s_0)\|_{L^2}\leq C(k,v_0)\lambda^k(s_0) \text{ for } 1\leq k\leq 2L. \label{eq:init energy}
	\end{align}
\end{enumerate}
We note that the map $(\mathring \calU(s_0),\mathring\calV(s_0)) \mapsto (\calU(s),\calV(s))$ is continuous by the local well-posedness.
From the hierarchy of the conservation laws \eqref{eq:hierarchy}, the quantity, $I_{2k}(v)=(-1)^k\|\td \bfD_v^k v\|_{L^2}^2$ is conserved for $1\leq k\leq 2L$. Here, we used $\td \bfD_v^*=-\td \bfD_v$. By the definition of $w_k$, we have
\begin{align*}
    \|w_k(s_0)\|_{L^2}^2\lambda(s_0)^{-2k}=\|\td \bfD_{v_0}^k v_0\|_{L^2}^2=\|\td \bfD_v^k v\|_{L^2}^2=\|w_k(s)\|_{L^2}^2\lambda(s)^{-2k}.
\end{align*}
This and \eqref{eq:init energy} imply that
\begin{align}
	    \|w_k(s)\|_{L^2}\leq C(k,v_0)\lambda^k(s) \quad\textnormal{for any } s.\label{eq:boots energy}
\end{align}

Now, we introduce our main bootstrap proposition, which ensures that the solution remains trapped within the region defined by the bootstrap assumptions as time progresses, leading to the desired blow-up dynamics.

\begin{prop}[Trapped regime]\label{prop:bootstrap}
	For $0<\kappa<\frac{1}{8(4L-1)}$, let $v(s_0)$ be given of the form \eqref{eq:init form} satisfying \eqref{eq:init lambda b relation aux}, \eqref{eq:init stable aux}, and \eqref{eq:init energy aux}. Then, there exists an initial direction of the unstable modes satisfying \eqref{eq:init unstable aux} such that the solution $v(s)$ to \eqref{CMdnls-gauged} satisfies the following bounds for all $s\geq s_0$:
	\begin{align}
        \begin{gathered}
            \frac{1}{2}\leq \frac{b_1^L}{\lambda^{2L-\frac{1}{2}}}(s) \leq 2 ,
            \quad |\calU_1(s)|\leq 10s^{-\kappa}, 
            \\
            s^\kappa( \calU_2(s),\cdots,\calU_L(s), \calV_1(s),\cdots,\calV_L(s))\in 1.01\calB^{2L-1}.   
        \end{gathered}
		\label{eq:bootstrap assump}
	\end{align}
\end{prop}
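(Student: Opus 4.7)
The strategy is a standard bootstrap argument on $[s_0, s^*)$ combined with a Brouwer topological argument to select the initial direction of the $2L-1$ unstable modes. For each initial datum in $\calO_{init}$ whose unstable coordinate
\[ \mathring z \coloneqq s_0^\kappa\bigl(\mathring\calU_2(s_0),\dots,\mathring\calU_L(s_0),\mathring\calV_1(s_0),\dots,\mathring\calV_L(s_0)\bigr) \]
is allowed to range over $1.01\calB^{2L-1}$, define the exit time
\[ s^* = s^*(\mathring z)\coloneqq \sup\bigl\{s\ge s_0 : \eqref{eq:bootstrap assump}\text{ holds on }[s_0,s]\bigr\}. \]
I would assume for contradiction that $s^*(\mathring z) < \infty$ for every admissible $\mathring z$ and deduce a topological obstruction. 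The plan is to first close all non-unstable bounds strictly on $[s_0, s^*)$, so that at $s^*$ the solution necessarily exits through the unstable boundary $\partial(1.01\calB^{2L-1})$, and then to derive outward transversality on that boundary in order to obtain a continuous retraction of $1.01\calB^{2L-1}$ onto its boundary.

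The preparatory step, which closes the modulation bootstrap, is the size estimate $\|\eps_{2k-1}\|_{\dot\calH^1} \lesssim \lambda^{2k}$ for $1\le k\le L$. The conservation law \eqref{eq:hierarchy} combined with \eqref{eq:boots energy} gives $\|w_{2k}\|_{L^2}\lesssim \lambda^{2k}$ unconditionally; under the smallness $w\approx Q$ provided by \eqref{eq:bootstrap assump}, the identity \eqref{eq:DQ BQ decomp} and the isometry \eqref{eq:BQBQstar equal I} yield
\[ \|A_Q w_{2k-1}\|_{L^2} = \|B_Q^*\partial_y B_Q w_{2k-1}\|_{L^2} = \|\td\bfD_Q w_{2k-1}\|_{L^2} \approx \|w_{2k}\|_{L^2} \lesssim \lambda^{2k}. \]
Since the orthogonality conditions built into the decomposition \eqref{eq:w2k-1 decompose} match the kernel directions of $A_Q$ on odd functions, the coercivity of $A_Q$ in Lemma \ref{lem:coercivity for linear}(3) delivers the desired $\dot\calH^1$-bound. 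Projecting the renormalized equations \eqref{eq:w equ} and \eqref{eq:w2k-1 equ} against $\pm\tfrac{1}{2}(-i)^k yQ\chi$ then extracts the modulation ODE \eqref{eq:first mod equ}--\eqref{eq:mod eqn}; after passing to the diagonalized fluctuations via $P,Q$ one obtains, for each $j$,
\[ s(\calU_j)_s = d^U_j \calU_j + O(s^{-2\kappa}), \qquad s(\calV_j)_s = d^V_j \calV_j + O(s^{-2\kappa}), \]
with $d^U_1 = -1$ and all remaining $d^U_j, d^V_j \ge \tfrac{1}{4L-1}$ strictly positive.

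With this ODE control the improvement of the two non-unstable bounds is routine. By \eqref{eq:first mod equ} and the $b_1$-equation, the logarithmic derivative of $b_1^L/\lambda^{2L-1/2}$ is $O(s^{-1-\kappa})$, so the ratio stays within a factor $1+Cs_0^{-\kappa}$ of its initial value, strictly improving $[1/1.1,1.1]$ to remain inside $[1/2,2]$. The stable mode satisfies $s(\calU_1)_s = -\calU_1 + O(s^{-2\kappa})$, which gives $|\calU_1(s)| \le (s_0/s)|\calU_1(s_0)| + Cs^{-2\kappa} \le 5 s^{-\kappa}$, strictly improving the factor $10$. Consequently at $s^*$ the full unstable vector must satisfy $|s^{*\kappa}(\calU_2,\dots,\calV_L)(s^*)| = 1.01$. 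On this boundary a short computation, using $(\calU_j)_s = d_j\calU_j/s + O(s^{-1-2\kappa})$ and $\kappa < \tfrac{1}{8(4L-1)}$, gives the outward transversality
\[ \tfrac{1}{2}\tfrac{d}{ds}\bigl|s^\kappa(\calU_2,\dots,\calV_L)\bigr|^2 \,\Big|_{s=s^*} \;\ge\; \bigl(\tfrac{1}{4L-1} + \kappa\bigr)\tfrac{(1.01)^2}{s^*} - Cs^{*-1-\kappa} \;>\; 0. \]
Combined with continuous dependence of $(\calU(s),\calV(s))$ on $\mathring z$ (which follows from local well-posedness and the $C^1$-regularity of the decomposition in Lemma \ref{lem:decomposition} together with \eqref{eq:bk def}), the map $\mathring z \mapsto s^{*\kappa}(\calU_2,\dots,\calV_L)(s^*(\mathring z))$ is a continuous retraction of $1.01\calB^{2L-1}$ onto its boundary, contradicting Brouwer's no-retraction theorem. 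Hence some $\mathring z$ must yield $s^*=+\infty$, proving the proposition. The main obstacle is the careful propagation of the modulation estimates through the nonlocal Hilbert structure so that the PDE-level remainders in the equations for $(\calU,\calV)$ are genuinely dominated on the unstable boundary; the hierarchy of conservation laws, which gives the higher-order bounds for free, is what makes this propagation tractable without having to establish coercivity for higher-order linearized operators.
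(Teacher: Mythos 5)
Your overall architecture matches the paper's: close the non-unstable bounds with a strict improvement, reduce to an exit through the unstable boundary $\partial(1.01\calB^{2L-1})$, and then run a Brouwer-type topological argument using outward transversality of the unstable modes. The derivation of the $\dot\calH^1$ control on $\eps_{2k-1}$ from the conservation laws and \eqref{eq:DQ BQ decomp}, \eqref{eq:BQBQstar equal I} is also the same.

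However, there is a genuine gap in the step where you claim that projecting \eqref{eq:w2k-1 equ} against $(-i)^k yQ\chi$ gives, for \emph{every} $j$,
\begin{equation*}
s(\calU_j)_s = d^U_j \calU_j + O(s^{-2\kappa}),\qquad s(\calV_j)_s = d^V_j \calV_j + O(s^{-2\kappa}).
\end{equation*}
This is false for $j=L$. The modulation estimate \eqref{eq:mod equ 2} is proved only for $1\le k\le L-1$; its proof rewrites $iH_w w_{2k-1}$ as $-i w_{2k+1}+\dots$ and bounds $|(i\eps_{2k+1},(-i)^k yQ\chi)_r|$ using \eqref{eq:e2k-1 coercivity} at index $k+1$. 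For $k=L$ this fails twice over: the data is only $H^{2L}$ so $w_{2L+1}=\td\bfD_w^{2L+1}w$ is not even defined, and the conservation hierarchy \eqref{eq:boots energy} stops at level $2L$, so no $\dot\calH^1$ smallness for a putative $\eps_{2L+1}$ is available. The paper resolves this by introducing the \emph{refined} parameters $\td b_L,\td\eta_L$ in \eqref{eq:def refine mod}, defined by testing $B_Q w_{2L-1}$ against $\chi_R$ at the intermediate scale $R=\lambda^{-(1-1/4L)}$, so that the projection of $iH_w w_{2L-1}$ can be treated via $H_Q = A_Q^* A_Q$ and integration by parts inside the window, never touching $w_{2L+1}$. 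The gap \eqref{eq:refined mod gap} and equation \eqref{eq:refined mod equ} then feed into \eqref{eq:UV diag equ}, and the Brouwer argument must be carried out in the variables $(\td\calU,\td\calV)$, comparing back to $(\calU,\calV)$ only through \eqref{eq:calUB tilde similar}. Without this device your outward transversality estimate, and hence the retraction, does not exist along the $L$-th unstable direction. A secondary, smaller point: the map you construct sends $\calB^{2L-1}$ into $\partial(1.01\calB^{2L-1})$, so it is not literally a retraction; one must compose with the radial projection $P_{2L-1}$ and show near-identity on $\partial\calB^{2L-1}$ (using the proximity \eqref{eq:mod para sim} between $\mathring b_k,\mathring\eta_k$ and $b_k,\eta_k$ at $s_0$, plus \eqref{eq:calUB tilde similar}), as the paper does.
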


For the initial data $v_0=v(s_0)$ satisfying \eqref{eq:init lambda b relation}--\eqref{eq:init energy}, we define a bootstrap exit time
\begin{align}
    s^*\coloneqq\sup\{s\geq s_0: \eqref{eq:bootstrap assump} \textnormal{ holds true on } [s_0,s] \}. \label{eq:bootstrap exit time}
\end{align}
We note that $s^*$ is well-defined, because \eqref{eq:bootstrap assump} holds true for $s=s_0$ by \eqref{eq:init lambda b relation}--\eqref{eq:init unstable}. 

To prove Proposition \ref{prop:bootstrap} using the contradiction argument, we assume $s^*<\infty$ for all initial directions of the unstable modes satisfying \eqref{eq:init unstable aux}. In view of \eqref{eq:b lambda relation sec3}, we have 
\begin{align}
    \lambda\sim b_1^{\frac{L}{2L-\frac{1}{2}}},\quad |b_j|\sim b_1^j,\quad |\eta_j|\lesssim b_1^{j+\kappa}
    \label{eq:b lambda relation}
\end{align}
for $s\in [s_0, s^*]$. Here, we observe that $\lambda^2 \mathbin{/} b_1\sim \lambda^{\frac{1}{2L}}\ll 1$.

In the rest of the paper, we assume the bootstrap assumption \eqref{eq:bootstrap assump}.

\subsection{Higher-order energy and modulation estimates}\label{sec:modul esti}
In this subsection, we establish higher-order energy estimates and justify the formal modulation laws. To this end, we employ \emph{different decompositions according to topologies}. This method was introduced in \cite{KimKimKwon2024arxiv} to avoid issues arising from the slow decay of the soliton $Q$ and the Hilbert transform $\calH$. 

We briefly outline this approach. In view of Lemma \ref{lem:decomposition} and \eqref{eq:w2k-1 decompose}, we use the following decompositions:
\begin{align}\label{eq:decomposition summary}
    \begin{split}
        w & =Q+\widehat{\eps}\\
         & =Q+T_1+ \eps ,\\
        w_{2k-1} & =w_{2k-1}\text{ itself}\\
         & =P_{2k-1}+\eps_{2k-1}.
    \end{split}
    \begin{split}
        &(\text{on }H^{1}_{e}, \ H^1 \text{ level})\\
        &(\text{on }\dot{\mathcal{H}}^{2}_{e}, \ \dot H^2 \text{ level})\\
        &(\text{on }L^{2}_{o}, \ \dot H^{2k-1} \text{ level})\\
        &(\text{on }\dot{\mathcal{H}}^{1}_{o}, \ \dot H^{2k} \text{ level})
    \end{split}
\end{align}
Since the profiles $T_1$ and $P_{2k-1}$ are not defined in $L^2$ due to their slow decay (or growth), decompositions such as $w=Q+T_1+\eps$ and $w_{2k-1}=P_{2k-1}+\eps_{2k-1}$ are not admissible in $L^2$. Typically, this issue is addressed by introducing cut-offs in the profiles. However, this approach makes it difficult to handle the non-local structure. Instead of using cut-offs, we just perform decompositions only in the topologies where the profiles are well-defined.

Using these decompositions, we derive coercivity estimates for the radiation terms $\wt \eps$, $ \eps $, and $\eps_{2k-1}$. In particular, the estimate for $\eps_{2k-1}$ is obtained by applying \eqref{eq:boots energy}. We also point out that the above decompositions cover estimates for all $H^{j}$ level, where $1\leq j\leq 2L$. Finally, to control the evolution of the modulation parameters, we justify the dynamical laws observed in Section \ref{sec:dynamical laws}.

\begin{lem}\label{lem:coercivity} For any $s\in [s_0, s^*]$, the following estimates hold:
	\begin{enumerate}
        \item (Consequence of the hierarchy of conservation law) For $1\leq j \leq 2L-1$, we have 
        \begin{align}
            \|A_Qw_{j}\|_{L^2}\lesssim\lambda^{j+1}. \label{eq:boots energy AQ}
        \end{align}
        
		\item (Coercivity estimates) We have
		\begin{align}
			\|\wt\eps\|_{\dot\calH^{1}}\sim\|w_1\|_{L^2}\lesssim \lambda, \label{eq:ehat H1 coercivity}
		\end{align}
		and
		\begin{align}
			\| \eps \|_{\dot\calH^{2}}\lesssim \lambda^2. \label{eq:e0 H2 coercivity}
		\end{align}
		Moreover, for $1\leq k \leq L$, we have the coercivity estimates for nonlinear variables,
		\begin{align}
			\|\eps_{2k-1}\|_{\dot\calH^{1}}\lesssim \lambda^{2k}. \label{eq:e2k-1 coercivity}
		\end{align}

        \item (Interpolation estimate) We have
        \begin{align}
            \|Q \eps \|_{L^\infty}+\|\partial_y \eps \|_{L^\infty}
            \lesssim \lambda^{\frac{3}{2}}, \label{eq:e L inf}
        \end{align}
        and
        \begin{align}
            \|\eps_{2k-1}\|_{L^\infty}
            \lesssim \lambda^{2k-\frac{1}{2}}.  \label{eq:e2k-1 L inf}
        \end{align}

        \item (Proximity for $b_1$ and $\eta_1$) We have
        \begin{align}
            |\wt b_1-b_1|+|\wt \eta_1-\eta_1|\lesssim \lambda^2. \label{eq:b1 similar}
        \end{align}
	\end{enumerate}
\end{lem}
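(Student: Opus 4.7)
The plan is to prove \eqref{eq:ehat H1 coercivity} first, then (1), then the remaining parts of (2), and finally (3) and (4), exploiting at each stage the conjugation identities of Proposition~\ref{prop:AQ-BQ definition} together with the conservation bound \eqref{eq:boots energy}. Starting from $w_1=L_Q\wt\eps+N_Q(\wt\eps)$ of \eqref{eq:coercive pf w1 equ}, Lemma~\ref{lem:decomposition} yields $(\wt\eps,\calZ_k)_r=0$ for $k=1,2$, and \eqref{eq:transversal} makes the $\calZ_k$ admissible as the dual vectors in Lemma~\ref{lem:coercivity for linear}(1), giving $\|\wt\eps\|_{\dot\calH^1}\sim\|L_Q\wt\eps\|_{L^2}$. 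The nonlinear piece $\|N_Q(\wt\eps)\|_{L^2}\lesssim \|\wt\eps\|_{L^\infty}\|\wt\eps\|_{\dot\calH^1}$ is absorbed using the smallness $\|\wt\eps\|_{L^2}\le \delta'$ furnished by the decomposition lemma; with $\|w_1\|_{L^2}\lesssim\lambda$ from \eqref{eq:boots energy}, the estimate follows.

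For (1), the pivotal identity is $\|A_Qf\|_{L^2}=\|\td\bfD_Qf\|_{L^2}$, obtained from the factorization $\td\bfD_Q=B_Q^*A_Q$ in \eqref{eq:DQ BQ decomp} together with the isometry property $\|B_Q^*g\|_{L^2}=\|g\|_{L^2}$ encoded in \eqref{eq:BQBQstar equal I}. Writing $\td\bfD_Q w_j = w_{j+1}+(\td\bfD_Q-\td\bfD_w)w_j$ and expanding the difference in $\wt\eps$ yields terms of the schematic form $Q\calH(\ol{\wt\eps}w_j)$, $\wt\eps\calH(Qw_j)$, and $\wt\eps\calH(\ol{\wt\eps}w_j)$, which are bounded via the splitting $Q\calH g=\calH(Qg)+[Q,\calH]g$, the Calder\'{o}n commutator estimate, and the sharp pointwise bound $\|Q\wt\eps\|_{L^\infty}\lesssim\lambda$ (coming from Gagliardo--Nirenberg applied to $Q\wt\eps$, using $Q\lesssim \langle y\rangle^{-1}$ together with \eqref{eq:ehat H1 coercivity}). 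Coupled with $\|w_\ell\|_{L^2}\lesssim\lambda^\ell$ from \eqref{eq:boots energy}, this delivers \eqref{eq:boots energy AQ}.

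The remaining estimates in (2) then follow. Applying $A_Q$ to $w_1=L_Q\wt\eps+N_Q(\wt\eps)$ and using $\calL_2=A_QL_Q$ on smooth inputs, the relation $L_Q T_1(\wt b_1,\wt\eta_1)=P_1(\wt b_1,\wt\eta_1)$, and $A_Q P_1=0$ (as $yQ,iyQ\in\ker A_Q$), one obtains $\calL_2\eps = A_Q w_1 - A_Q N_Q(\wt\eps)$, which is $O(\lambda^2)$ by (1) and the quadratic size of $N_Q$; Lemma~\ref{lem:coercivity for linear}(2) with dual vectors $\calZ_1,\ldots,\calZ_4$ (admissible by \eqref{eq:transversal}) concludes \eqref{eq:e0 H2 coercivity}. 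For $\eps_{2k-1}$, the decomposition $w_{2k-1}=P_{2k-1}+\eps_{2k-1}$ together with $A_Q P_{2k-1}=0$ (since $P_{2k-1}\propto yQ$) yields $\|A_Q\eps_{2k-1}\|_{L^2}\lesssim\lambda^{2k}$, and the orthogonality conditions in \eqref{eq:w2k-1 decompose} match Lemma~\ref{lem:coercivity for linear}(3) to give \eqref{eq:e2k-1 coercivity}.

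Finally, (3) is a Gagliardo--Nirenberg consequence of (2) and \eqref{eq:boots energy}, applying $\|f\|_{L^\infty}^2\lesssim \|f\|_{L^2}\|f'\|_{L^2}$ to $\partial_y\eps$, $Q\eps$, and $w_{2k-1}$, combined with $\|P_{2k-1}\|_{L^\infty}\lesssim|b_k|+|\eta_k|\lesssim \lambda^{2k-k/(2L)}\le\lambda^{2k-1/2}$ from \eqref{eq:b lambda relation}. For (4), I pair $w_1=-(i\wt b_1+\wt\eta_1)\tfrac{y}{2}Q+L_Q\eps+N_Q(\wt\eps)$ against $-\tfrac{i}{2}yQ\chi$ and $-\tfrac{1}{2}yQ\chi$, controlling the $L_Q\eps$ contribution by duality against the smooth compactly supported $L_Q^*(yQ\chi)$ together with $\|\eps\|_{\dot\calH^2}\lesssim\lambda^2$, and the nonlinear term by its quadratic size. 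The main obstacle will be the sharp remainder bound in (1): the naive $\|\wt\eps\|_{L^\infty}\lesssim \lambda^{1/2}$ yields only $\lambda^{j+1/2}$, so reaching the claimed exponent $\lambda^{j+1}$ requires crucially the weight $Q\lesssim\langle y\rangle^{-1}$ and the commutator cancellation in the Hilbert transform.
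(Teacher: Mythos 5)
Your plan follows the same order as the paper's proof (first $\wt\eps$, then the $A_Q$ bound via the conservation laws, then $\eps$ and $\eps_{2k-1}$, finally the pointwise bounds and the proximity \eqref{eq:b1 similar}), and your key identity $\|A_Q f\|_{L^2}=\|\td\bfD_Q f\|_{L^2}$, obtained from $B_QB_Q^*=I$ in \eqref{eq:BQBQstar equal I} together with \eqref{eq:DQ BQ decomp}, is precisely the engine the paper uses. For part (1), your route via the Calder\'on commutator $Q\calH g=\calH(Qg)+[Q,\calH]g$ is a legitimate alternative to the paper's use of $B_QQ=0$ and the commutator identity \eqref{eq:CommuteHilbert}; but note it closes only with the \emph{refined} kernel bound $|K_{[Q,\calH]}(x,y)|\lesssim \langle x\rangle^{-1}\langle y\rangle^{-1}$ (which holds since $\frac{Q(x)-Q(y)}{x-y}=-\sqrt2\,\frac{x+y}{\langle x\rangle\langle y\rangle(\langle x\rangle+\langle y\rangle)}$); the crude Calder\'on bound $\|[Q,\calH]\|_{L^2\to L^2}\lesssim\|Q'\|_{L^\infty}$ loses a half power of $\lambda$. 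Also, for the term $\wt\eps\calH(Qw_j)$ your stated splitting (pulling $Q$ \emph{out of} $\calH$) is not directly applicable since $Q$ sits \emph{inside} $\calH$; you need the reverse manipulation $\wt\eps\calH(Qw_j)=\wt\eps Q\calH w_j-\wt\eps[Q,\calH]w_j$, and then the refined kernel bound lands each piece at $\lambda^{j+1}$. You correctly flagged that this is the sharpest point of (1).

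There are, however, two genuine gaps. In part (3), the Gagliardo--Nirenberg argument you propose to apply directly to $\partial_y\eps$, $Q\eps$, and $\eps_{2k-1}$ is vacuous: these functions are \emph{not in $L^2$}. Indeed $\eps=\wt\eps-T_1$ with $T_1\sim b_1 y^2Q$, so $\partial_yT_1$ and $QT_1$ both tend to a nonzero constant of size $b_1$ at infinity; similarly $P_{2k-1}\propto yQ$ is bounded but not square-integrable, hence neither is $\eps_{2k-1}$. Applying GN to $w_{2k-1}$ instead gives a finite, but \emph{insufficient}, bound: $\|\partial_y w_{2k-1}\|_{L^2}\gtrsim\|\partial_yP_{2k-1}\|_{L^2}\sim |b_k|\sim\lambda^{2k-k/(2L)}$, so GN yields only $\|w_{2k-1}\|_{L^\infty}\lesssim\lambda^{2k-\frac12-\frac{k}{4L}}$, which for $k=L$ misses the target $\lambda^{2L-\frac12}$ by $\lambda^{-1/4}$. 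The paper resolves this with the cut-off variable $\wt\eps_{cut}=\wt\eps-T_1\chi_{\lambda^{-1}}$ (and the analogous splitting for $w_{2k-1}$): one checks \emph{both} $\|Q\wt\eps_{cut}\|_{L^2}\lesssim\lambda$ and $\|Q\wt\eps_{cut}\|_{\dot H^1}\lesssim\lambda^2$, so GN gives $\lambda^{3/2}$, while the tail $QT_1(1-\chi_{\lambda^{-1}})$ is estimated in $L^\infty$ directly. This cut-off decomposition is the missing step.

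In part (4), bounding the nonlinear contribution ``by its quadratic size'' fails for the term $Q\calH(|\wt\eps|^2)$ in $N_Q(\wt\eps)$: the naive estimate $\|\calH(|\wt\eps|^2)\|_{L^2}\lesssim\|\wt\eps\|_{L^\infty}\|\wt\eps\|_{L^2}\lesssim\lambda^{1/2}$ is far from the required $\lambda^2$. You need exactly the same commutator cancellation you diagnosed in (1): using $(1+y^2)\langle y\rangle^{-2}=1$ and \eqref{eq:CommuteHilbert} to write $\calH(|\wt\eps|^2)=(1+y^2)\calH(\langle y\rangle^{-2}|\wt\eps|^2)-\tfrac{y}{\pi}\int\langle y\rangle^{-2}|\wt\eps|^2\,dy$, after which pairing against $yQ\chi$ yields $O(\lambda^2)$. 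Carrying your own observation for (1) over to (4) closes this gap.
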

\begin{rem}\label{rem:key estimate}
    We obtain \eqref{eq:boots energy AQ} directly from \eqref{eq:boots energy}. In this step, the repulsivity \eqref{eq:BQBQstar equal I} and \eqref{eq:DQ BQ decomp} are used to replace $\td \bfD_Q$ by $A_Q$. Moreover, the estimate \eqref{eq:boots energy AQ} yields the coercivity estimate for $\varepsilon_{2k-1}$ \eqref{eq:e2k-1 coercivity}, which serves as an alternative to the coercivity for higher-order operators such as $\td\bfD_{Q}^{2k-2}L_Q$ or $\calL_{2k-1}$.
\end{rem}
Proposition \ref{prop:proximity init} implies that \eqref{eq:b1 similar} holds at the initial time, $s = s_0$. However, to close the bootstrap, we need to show that \eqref{eq:b1 similar} remains true for as long as the bootstrap assumption holds. This requirement arises because the decompositions $w=Q+T_1+\eps$ and $w_1=P_1+\eps_1$ are both at the $\dot H^2$ level.
The proximity \eqref{eq:b1 similar} ensure that $\wt{b}_1$, as defined in Lemma \ref{lem:decomposition}, and $b_1$, as defined in \eqref{eq:bk def}, are nearly equal.
\begin{proof}[Proof of Lemma \ref{lem:coercivity}]
    We will prove the lemma in the following order: First, we prove \eqref{eq:ehat H1 coercivity}. Next, we establish \eqref{eq:boots energy AQ} and \eqref{eq:e2k-1 coercivity}. Then, we complete the proof of \eqref{eq:e0 H2 coercivity} and \eqref{eq:b1 similar}. Finally, we prove (3).

    \textbf{Step 1.} The proof of \eqref{eq:ehat H1 coercivity} is essentially the same as the proof of \cite[Lemma~5.14]{KimKimKwon2024arxiv}. Therefore, we provide only a brief outline. We start from
    \begin{align}
        w_1=L_Q\wt\eps+N_Q(\wt\eps),\quad N_Q(\wt\eps)=\wt\eps\mathcal{H}(\Re(Q\wt\eps))+\tfrac{1}{2}(Q+\wt\eps)\mathcal{H}(|\wt\eps|^2). \label{eq:coercive pf w1 equ}
    \end{align}
    Using \eqref{eq:CommuteHilbert} with $(1+y^2)\langle y\rangle^{-2}=1$, we derive that
    \begin{align*}
        \|w_1-L_Q\wt\eps\|_{L^2}\lesssim& \|\widehat{\eps}\|_{\dot{\mathcal{H}}^{1}}(\|\widehat{\eps}\|_{L^{2}}+\|\widehat{\eps}\|_{L^{2}}^{2})+\|Q\calH(|\wt\eps|^2)\|_{L^2}
        \\
        \lesssim & \|\widehat{\eps}\|_{\dot{\mathcal{H}}^{1}}(\|\widehat{\eps}\|_{L^{2}}+\|\widehat{\eps}\|_{L^{2}}^{2}),
    \end{align*}
    which implies
    \begin{align*}
        (1-\|\widehat{\eps}\|_{L^{2}})\|\widehat{\eps}\|_{\dot{\mathcal{H}}^{1}}
        \lesssim\|w_{1}\|_{L^{2}}
        \lesssim(1+\|\widehat{\eps}\|_{L^{2}})\|\widehat{\eps}\|_{\dot{\mathcal{H}}^{1}}.
    \end{align*}
    Since $\|\wt\eps\|_{L^2}$ is sufficiently small, \eqref{eq:coercivity L1} implies \eqref{eq:ehat H1 coercivity} as follows:
    \begin{align*}
        \|\wt\eps\|_{\dot \calH^1}\sim \|L_Q\wt\eps\|_{L^2}\sim \|w_1\|_{L^2}\lesssim \lambda.
    \end{align*}
    
    \textbf{Step 2.} Now, we show \eqref{eq:boots energy AQ} and \eqref{eq:e2k-1 coercivity}. \footnote{Although these arguments also appear in the current version of \cite{KimKimKwon2024arxiv}, they were first obtained in the present work.}
    
    We remark that $w_{2k-1}$ is odd since $w$ is even (radial). Thus, if \eqref{eq:boots energy AQ} is true, then we have
    \begin{align*}
        \|\eps_{2k-1}\|_{\dot\calH^{1}}\sim \|A_Q\eps_{2k-1}\|_{L^2}\lesssim \lambda^{2k}
    \end{align*}
    by the coercivity \eqref{eq:coercivity A1} with \eqref{eq:w2k-1 decompose}. Therefore, it suffices to show \eqref{eq:boots energy AQ}.
    Thanks to the repulsivity \eqref{eq:BQBQstar equal I} and \eqref{eq:DQ BQ decomp}, we have 
    \begin{align*}
    	\|A_Qw_j\|_{L^2}
        =\|B_Q\td \bfD_Qw_j\|_{L^2}
        &\leq \|B_Qw_{j+1}\|_{L^2}+\|B_Q(w_{j+1}-\td\bfD_Qw_j)\|_{L^2}
        \\
        &\lesssim \|w_{j+1}\|_{L^2}+\|B_Q(w_{j+1}-\td\bfD_Qw_j)\|_{L^2}.
    \end{align*}
    Note that we have $\|B_Qf\|_{L^2}\lesssim \|f\|_{L^2}$ for $f\in L^2$. By \eqref{eq:boots energy}, we have $\|w_{j+1}\|_{L^2}\lesssim \lambda^{j+1}$. Thus, the proof of \eqref{eq:boots energy AQ} is reduced to showing
    \begin{align}
        \|B_Q(w_{j+1}-\td\bfD_Qw_j)\|_{L^2}\lesssim \lambda^{j+1}. \label{eq:coerciv step 2 goal}
    \end{align}
    From $w_{j+1}=\td\bfD_ww_{j}$ and $w=Q+\wt\eps$, we decompose \eqref{eq:coerciv step 2 goal} into three parts as follows:
    \begin{align*}
    	2B_Q(w_{j+1}-\td\bfD_Qw_j)=B_Q[Q\calH(\ol{\wt\eps} w_j)+\wt\eps\calH(Qw_j)+\wt\eps\calH(\ol{\wt\eps} w_j)].
    \end{align*}
    For $\wt\eps\calH \ol{\wt\eps} w_j$, \eqref{eq:ehat H1 coercivity} and \eqref{eq:boots energy} imply
    \begin{align*}
    	\|B_Q[\wt\eps\calH (\ol{\wt\eps} w_j)]\|_{L^2}
        \lesssim \|\wt\eps\calH (\ol{\wt\eps} w_j)\|_{L^2}
        \lesssim \|\wt\eps\|_{L^\infty}^2\|w_j\|_{L^2}\lesssim \lambda^{j+1}.
    \end{align*}
    For $\wt\eps\calH Qw_j$, using \eqref{eq:CommuteHilbert} with $(1+y^2)\langle y\rangle^{-2}=1$, we compute
    \begin{align*}
    	\wt\eps\calH (Qw_j)=\langle y\rangle^{-2}\wt\eps\calH (Qw_j)+y\langle y\rangle^{-2}\wt\eps\calH(yQw_j)+y\langle y\rangle^{-2}\wt\eps\textstyle{\int_{\bbR}} Qw_j,
    \end{align*}
    and thus, we derive
    \begin{align*}
    	\|\wt\eps\calH (Qw_j)\|_{L^2}
    	\lesssim \|\wt\eps\|_{\dot\calH^1}\|w_j\|_{L^2}\lesssim \lambda^{j+1}.
    \end{align*}
    To control $Q\calH \ol{\wt\eps} w_j$, we use $B_QQ=0$. Again using \eqref{eq:CommuteHilbert}, we have
    \begin{align}\label{eq:bqy part}
    	B_Q[Q\calH(\ol{\wt\eps} w_j)]=B_Q[Q\calH(\langle y\rangle^{-2}\ol{\wt\eps} w_j)]+B_Q[yQ\calH(y\langle y\rangle^{-2}\ol{\wt\eps} w_j)].
    \end{align}
    For the second term of the RHS in \eqref{eq:bqy part}, we deduce
    \begin{align*}
    	\|B_Q[yQ\calH(y\langle y\rangle^{-2}\ol{\wt\eps} w_j)]\|_{L^2}
    	\lesssim  \|\wt\eps\|_{\dot\calH^1}\|w_j\|_{L^2}\lesssim \lambda^{j+1}.
    \end{align*}
    The first term of the RHS in \eqref{eq:bqy part} can be handled similarly, we conclude \eqref{eq:boots energy AQ}.

    \textbf{Step 3.} The proof of \eqref{eq:e0 H2 coercivity} closely follows that of \cite[Lemma~5.14]{KimKimKwon2024arxiv}. Therefore, we provide only an outline.
    
    For \eqref{eq:e0 H2 coercivity}, we use \eqref{eq:coercivity L2} and $\|A_Qw_1\|_{L^2}=\|A_Q\eps_1\|_{L^2}\lesssim \lambda^2$. Since $A_QL_Q=\calL_2$ on $H^{2}$, we easily check that 
    \begin{align}
        \|A_Q\eps_1-\calL_2  \eps \|_{L^2}=\|A_QN_Q(\wt\eps)\|_{L^2}, \label{eq:coercive pf 1}
    \end{align}
    where $\eps_1$ and $ \eps $ are defined in \eqref{eq:w2k-1 decompose} and Lemma \ref{lem:decomposition} respectively. We also remark $N_Q(\wt\eps)$ is given in \eqref{eq:coercive pf w1 equ}. Using again \eqref{eq:CommuteHilbert} with $(1+y^2)\langle y\rangle^{-2}=1$, $A_Q(Q)=A_Q(yQ)=0$, and \eqref{eq:CommuteHilbertDerivative}\footnote{We use these to control the term $A_Q(Q\calH(|\wt\eps|^2))$.}, we have
    \begin{align}
        \|A_QN_Q(\wt\eps)\|_{L^2}\lesssim \|\wt\eps\|_{\dot \calH^1}^2+\|\wt\eps\|_{L^\infty}\|\eps\|_{\dot \calH^2}
        \lesssim \lambda^2+\lambda^{\frac{1}{2}}\|\calL_2  \eps \|_{L^2}. \label{eq:coercive pf 2}
    \end{align}
    Combining \eqref{eq:coercive pf 1} and \eqref{eq:coercive pf 2} with $\lambda \ll 1$, we have
    \begin{align*}
        \|\calL_2  \eps \|_{L^2}\lesssim \lambda^2+\|A_Q\eps_1\|_{L^2}\lesssim \lambda^2.
    \end{align*}
    Now, we prove \eqref{eq:b1 similar}. Since $yQ\chi$ is odd, we have $B_Q^*B_Q(yQ\chi)=yQ\chi$. Thus, we observe that
    \begin{align}
        (L_Q\wt\eps-w_1,\iota yQ\chi)_r=(\calL_1\wt\eps-B_Qw_1,\iota B_Q(yQ\chi))_r,\quad \iota=1,i. \label{eq:coercive pf b differ 1}
    \end{align}
    We also have that $\calL_0T_1$ is well-defined, and we can check $\calL_1T_1=B_QP_1$ from \eqref{eq:higher-order profile}. Taking the decompositions $\wt\eps=T_1(\wt b_1, \wt\eta_1)+ \eps $ and $w_1=P_1(b_1,\eta_1)+\eps_1$, we have
    \begin{align*}
        \eqref{eq:coercive pf b differ 1}
        &=\tfrac{1}{2}(iyQ,\iota yQ\chi)_r(b_1-\wt b_1)
        +\tfrac{1}{2}(yQ,\iota yQ\chi)_r(\eta_1-\wt \eta_1)
        \\
        &+(\calL_1 \eps ,B_Q(yQ\chi))_r.
    \end{align*}
    Since $yQ\chi$ is odd, we have $yB_Q(yQ\chi)=B_Q(y^2Q\chi)$ by using \eqref{eq:CommuteHilbert}. From this with $(1+y^2)\langle y\rangle^{-2}=1$, \eqref{eq:e0 H2 coercivity}, and the definition of $\calL_1$, we have
    \begin{align*}
        |(\calL_1 \eps ,B_Q(yQ\chi))_r|\lesssim \| \eps \|_{\dot\calH^2}\lesssim \lambda^2.
    \end{align*}
    Therefore, it suffices to show $|(L_Q\wt\eps-w_1,\iota yQ\chi)_r|\lesssim \lambda^2$. From \eqref{eq:coercive pf w1 equ}, we reduce the proof of this to showing
    \begin{align}
        |(N_Q(\wt\eps),\iota yQ\chi)_r|\lesssim \lambda^2. \label{eq:coercive pf b differ goal}
    \end{align}
    Let us prove it for the case $\iota=1$, as the case $\iota=i$ follows in the same way and will be omitted. Except the term $Q\calH(|\wt\eps|^2)$ in $N_Q(\wt\eps)$, the proof of \eqref{eq:coercive pf b differ goal} is same as \eqref{eq:coercive pf 2}. To control $Q\calH(|\wt\eps|^2)$, we use \eqref{eq:CommuteHilbert} with $(1+y^2)\langle y\rangle^{-2}=1$. Since $|\wt\eps|^2$ is even, we have
    \begin{align*}
        \calH(|\wt\eps|^2)=(1+y^2)\calH(\langle y\rangle^{-2}|\wt\eps|^2)-\tfrac{y}{\pi}\textstyle{\int_{\bbR}}\langle y\rangle^{-2}|\wt\eps|^2dy.
    \end{align*}
    Thus, we have
    \begin{align*}
        |(Q\calH(|\wt\eps|^2), yQ\chi)_r|\lesssim
        |(\calH(\langle y\rangle^{-2},y\chi)_r|+(yQ,yQ\chi)_r\textstyle{\int_{\bbR}}\langle y\rangle^{-2}|\wt\eps|^2dy
        \lesssim \lambda^2,
    \end{align*}
    and this proves \eqref{eq:coercive pf b differ goal}. Therefore, we conclude \eqref{eq:b1 similar}.

    \textbf{Step 4.} Now, we show $(3)$. The proofs of \eqref{eq:e L inf} and \eqref{eq:e2k-1 L inf} can also be found in \cite{KimKimKwon2024arxiv}, Lemma 5.14. Therefore, we will only prove the first part of \eqref{eq:e L inf}, as the others can be proven in a similar manner. We decompose $Q\wt\eps$ as
    \begin{align}
        Q\wt\eps=QT_1\chi_{\lambda^{-1}}+Q\wt\eps_{\text{cut}},\quad 
        Q\wt\eps_{\text{cut}}=QT_1(1-\chi_{\lambda^{-1}})+Q \eps . \label{eq:coercive pf Linf}
    \end{align}
    From the first equation of \eqref{eq:coercive pf Linf}, \eqref{eq:b1 similar}, and \eqref{eq:b lambda relation}, we derive
    \begin{align*}
        \|Q\wt\eps_{\text{cut}}\|_{L^2}\leq \|Q\wt\eps\|_{L^2}+\|QT_1\chi_{\lambda^{-1}}\|_{L^2}
        \lesssim \lambda+\wt b_1\lambda^{-\frac{1}{2}}\lesssim \lambda.
    \end{align*}
    Similarly, the second equation of \eqref{eq:coercive pf Linf} implies that
    \begin{align*}
        \|Q\wt\eps_{\text{cut}}\|_{\dot H^1}\leq 
        \|QT_1(1-\chi_{\lambda^{-1}})\|_{\dot H^1}+\|Q \eps \|_{\dot H^1}
        \lesssim \wt b_1\lambda^{\frac{1}{2}}+\lambda^2\lesssim \lambda^2.
    \end{align*}
    Combining these and the Sobolev inequality $\|f\|_{L^\infty}^2\lesssim \|f\|_{L^2}\|f\|_{\dot H^1}$, we deduce $\|Q\wt\eps_{\text{cut}}\|_{L^\infty}\lesssim \lambda^{\frac{3}{2}}$. Therefore, using the second equation of \eqref{eq:coercive pf Linf}, we obtain the first part of \eqref{eq:e L inf} such as
    \begin{align*}
        \|Q \eps \|_{L^\infty}\leq \|Q\wt\eps_{\text{cut}}\|_{L^\infty}+\|QT_1(1-\chi_{\lambda^{-1}})\|_{L^\infty}
        \lesssim \lambda^{\frac{3}{2}}+\wt b_1 \lesssim \lambda^{\frac{3}{2}}.
    \end{align*}
    The others can be derived by a similar argument, so we omit their proofs here. For details, refer to \cite[Lemma~5.14]{KimKimKwon2024arxiv}. With this, the proof is complete.
\end{proof}
Thanks to \eqref{eq:b1 similar}, we will no longer distinguish between $(\wt b_1, \wt\eta_1)$ and $(b_1,\eta_1)$ moving forward.

Now, we prove the modulation estimates. By taking the time derivative to the orthogonality conditions in Lemma \ref{lem:decomposition} and \eqref{eq:w2k-1 decompose}, we derive these estimates. 
\begin{lem}[Modulation estimates]\label{lem:mod esti} We have
	\begin{align}
		\left|\frac{\lambda_s}{\lambda}+b_1\right|+\left|\gamma_s-\frac{\eta_1}{2}\right|\lesssim \lambda^2, \label{eq:mod equ 1}
	\end{align}
	and for $1\leq k\leq L-1$,
	\begin{align}
		\begin{split}
			&|(b_k)_s-b_{k+1}+(2k-\tfrac{1}{2})b_1b_{k}+\tfrac{1}{2}\eta_1\eta_k|\lesssim b_1^{k+1}\lambda^{\frac{1}{2L}}
			\\
			&|(\eta_k)_s-\eta_{k+1}+(2j-\tfrac{1}{2})b_1\eta_{k}-\tfrac{1}{2}\eta_1b_k|\lesssim  b_1^{k+1}\lambda^{\frac{1}{2L}}
		\end{split} \label{eq:mod equ 2}
	\end{align}
\end{lem}
\begin{proof}
    The proof of \eqref{eq:mod equ 1} is indeed exactly the same as the proof of \cite[Lemma~5.15]{KimKimKwon2024arxiv}. So we will just give a brief overview. From \eqref{eq:w equ} and the decompositions $w=Q+\wt\eps$ and $w_1=P_1+\eps_1$, we have
    \begin{align*}
        (\partial_{s}-\tfrac{\lambda_{s}}{\lambda}\Lambda+\gamma_{s}i)\widehat{\eps}  +(iL_{w}^{*}w_{1}-iL_{Q}^{*}P_{1})=\left(\tfrac{\lambda_{s}}{\lambda}+b_1\right)\Lambda Q-\left(\gamma_{s}-\tfrac{\eta_1}{2}\right)iQ.
    \end{align*}
    Using the orthogonality conditions $(\wt\eps,\calZ_1)_r=(\wt\eps,\calZ_2)_r=0$, \eqref{eq:transversal}, and Lemma~\ref{lem:coercivity}, we can derive \eqref{eq:mod equ 1}. See \cite[Lemma~5.15]{KimKimKwon2024arxiv} for details.
    
	To prove \eqref{eq:mod equ 2}, we start from
	\begin{align}
		(\partial_s-\tfrac{\lambda_s}{\lambda}\Lambda_{-(2k-1)} +\gamma_s i)w_{2k-1}+iH_ww_{2k-1}=0. \label{eq:w2j-1 equ modpf aux}
	\end{align}
	By using \eqref{eq:Dv square} to \eqref{eq:w2j-1 equ modpf aux}, we have
	\begin{align}
		(\partial_s-\tfrac{\lambda_s}{\lambda}\Lambda_{-(2k-1)} +\gamma_s i)w_{2k-1}-iw_{2k+1}=
		-\tfrac{i}{2}(w_1\mathcal{H}\overline{w}-w\mathcal{H}\overline{w_1})w_{2k-1}. \label{eq:w2j-1 equ modpf}
	\end{align}
	From the decompositions
	\begin{align}
		w_{2k-1}=P_{2k-1}+\eps_{2k-1},\quad w_{2k+1}=P_{2k+1}+\eps_{2k+1}, \label{eq:mod pf decomposition}
	\end{align}
	we rewrite \eqref{eq:w2j-1 equ modpf} into
	\begin{align}
		&-\tfrac{1}{2}(-i)^{k-1}yQ[(\partial_s-\tfrac{\lambda_s}{\lambda}(2k-\tfrac{1}{2})+\gamma_si)(ib_k+\eta_k)-(ib_{k+1}+\eta_{k+1})]\label{eq:jth modul}
		\\
		&=(\tfrac{\lambda_s}{\lambda}+b_1)y\partial_yP_{2k-1}-(\partial_s-\tfrac{\lambda_s}{\lambda}\Lambda_{-(2k-1)} +\gamma_s i)\eps_{2k-1}+i\eps_{2k+1}+\calN_{2k-1}, \label{eq:jth modul RHS}
	\end{align}
	where
	\begin{align}
		\calN_{2k-1}=&-\tfrac{i}{2}w_1\mathcal{H}(\overline{w}w_{2k-1})+\tfrac{i}{2}P_1\mathcal{H}(QP_{2k-1}) \label{eq:jth modul non 1}
        \\
        \begin{split}
            &+\tfrac{i}{2}w\mathcal{H}(\langle y\rangle^{-2}\overline{w_1}w_{2k-1}) -\tfrac{i}{2}Q\mathcal{H}(\langle y\rangle^{-2}\overline{P_1}P_{2k-1})
             \\
            &+\tfrac{i}{2}yw\mathcal{H}(y\langle y\rangle^{-2}\overline{w_1}w_{2k-1}) -\tfrac{i}{2}yQ\mathcal{H}(y\langle y\rangle^{-2}\overline{P_1}P_{2k-1}).
        \end{split} \label{eq:jth modul non 2}
	\end{align}
    Note that we used
    \begin{equation*}
        \calH(\overline{w_1}w_{2k-1})=\calH(\langle y\rangle^{-2}\overline{w_1}w_{2k-1})
        +y\calH(y\langle y\rangle^{-2}\overline{w_1}w_{2k-1}),
    \end{equation*}
    which can be obtained by applying \eqref{eq:CommuteHilbert} with $(1+y^2)\langle y\rangle^{-2}=1$ and the evenness of $\overline{w_1}w_{2k-1}$ (so, $\int y\langle y\rangle^{-2}\overline{w_1}w_{2k-1}=0$).
    In addition, the profiles parts of \eqref{eq:jth modul non 1} and \eqref{eq:jth modul non 2} can be calculated using \eqref{eq:algebraic 1} and \eqref{eq:algebraic 2}, and one obtains
    \begin{equation*}
        \tfrac{i}{2}P_1\mathcal{H}(QP_{2k-1})
        -\tfrac{i}{2}Q\mathcal{H}(\langle y\rangle^{-2}\overline{P_1}P_{2k-1})
        -\tfrac{i}{2}yQ\mathcal{H}(y\langle y\rangle^{-2}\overline{P_1}P_{2k-1})
        =-b_1 y\partial_y P_{2k-1}.
    \end{equation*}
	We have
	\begin{align*}
		(\eqref{eq:jth modul},(-i)^{k}yQ\chi)_r&=((b_k)_s-b_{k+1}-(2k-\tfrac{1}{2})\tfrac{\lambda_s}{\lambda}b_k+\gamma_s\eta_k)(\tfrac{1}{2}yQ,yQ\chi)_r
		\\
		&=(\eqref{eq:jth modul RHS},(-i)^{k}yQ\chi)_r.
	\end{align*}
    Now, we claim that
    \begin{align*}
        (\eqref{eq:jth modul RHS},(-i)^{k}yQ\chi)_r
		=O(\lambda^2b_k)=O(b_1^{k+1}\lambda^{\frac{1}{2L}}).
    \end{align*}
    For $(\frac{\lambda_s}{\lambda}+b_1)y\partial_yP_{2k-1}$, since $|y\partial_yP_{2k-1}|\lesssim b_k$ in the pointwise sense, we have 
    \begin{align*}
        |(\tfrac{\lambda_s}{\lambda}+b_1)y\partial_yP_{2k-1},(-i)^kyQ\chi)_r|
        \lesssim |\tfrac{\lambda_s}{\lambda}+b_1||b_k|\lesssim \lambda^2|b_k|.
    \end{align*}
    From \eqref{eq:w2k-1 decompose}, we have $(\partial_s\eps_{2k-1},(-i)^{k}yQ\chi)_r=0$. Using \eqref{eq:e2k-1 coercivity}, we derive
    \begin{align*}
        \begin{split}
            |((-\tfrac{\lambda_s}{\lambda}\Lambda_{-(2k-1)} +\gamma_s i)\eps_{2k-1},(-i)^kyQ\chi)_r|&\lesssim (|\tfrac{\lambda_s}{\lambda}|+|\gamma_s|)\|\eps_{2k-1}\|_{\dot\calH^1}
            \\
            &\lesssim b_1\lambda^{2k}\sim b_1^{k+1}\lambda^{\frac{k}{2L}}.
        \end{split}
    \end{align*}
    Similarly, we have $|(i\eps_{2k+1},(-i)^kyQ\chi)_r|\lesssim \lambda^{2k+2}\sim b_1^{k+1}\lambda^{\frac{k+1}{2L}}$.

    Finally, we estimate $|(\calN_{2k-1},(-1)^kyQ\chi)_r|$. We claim that
    \begin{align}
        |(\calN_{2k-1},(-1)^kyQ\chi)_r|\lesssim b_1^{k+1}\lambda^{\frac{k}{2L}}. \label{eq:jth modul non goal}
    \end{align}
    For \eqref{eq:jth modul non 1}, we have
    \begin{align*}
        \eqref{eq:jth modul non 1}=
        -\tfrac{i}{2}\eps_1\calH(\overline{w}w_{2k-1})
        -\tfrac{i}{2}P_1\calH(\overline{\wt\eps}w_{2k-1})
        -\tfrac{i}{2}P_1\calH(Q\eps_{2k-1}).
    \end{align*}
    Thus, using \eqref{eq:e2k-1 coercivity}, we obtain
    \begin{align*}
        |(\eqref{eq:jth modul non 1},(-1)^kyQ\chi)_r|
        \lesssim \lambda^2\|\calH(\overline{w}w_{2k-1})\|_{L^\infty}+b_1(\|\calH(\overline{\wt\eps}w_{2k-1})\|_{L^\infty}+\lambda^{2k}).
    \end{align*}
    Thanks to $\|\calH(f)\|_{L^\infty}^2\lesssim \|f\|_{L^2}\|f\|_{\dot H^1}$, the decomposition $w_{2k-1}=P_{2k-1}+\eps_{2k-1}$, \eqref{eq:ehat H1 coercivity}, and \eqref{eq:e2k-1 coercivity}, we have 
    \begin{align}
        \|\calH(\overline{\wt\eps}w_{2k-1})\|_{L^\infty}\lesssim |b_k|^{\frac{1}{2}}\lambda^{k+\frac{1}{2}}+\lambda^{2k}\lesssim \lambda^{2k}. \label{eq:jth modul non 1-1}
    \end{align}
    For $\|\calH(\overline{w}w_{2k-1})\|_{L^\infty}$, using $w=Q+\wt\eps$, we have
    \begin{align}
        \|\calH(\overline{w}w_{2k-1})\|_{L^\infty}\leq \|\calH(Qw_{2k-1})\|_{L^\infty}
        +\|\calH(\overline{\wt\eps}w_{2k-1})\|_{L^\infty}. \label{eq:jth modul non goal 1}
    \end{align}
    The second term in \eqref{eq:jth modul non goal 1} is exactly the same as \eqref{eq:jth modul non 1-1}. For the first term in \eqref{eq:jth modul non goal 1}, again using the fact $\|\calH(f)\|_{L^\infty}^2\lesssim \|f\|_{L^2}\|f\|_{\dot H^1}$, the decomposition \eqref{eq:mod pf decomposition}, and \eqref{eq:ehat H1 coercivity}, we have $\|\calH(Qw_{2k-1})\|_{L^\infty}\lesssim |b_k|$. These conclude that
    \begin{align*}
         |(\eqref{eq:jth modul non 1},(-1)^kyQ\chi)_r|\lesssim |b_k|\lambda^2+b_1\lambda^{2k}\lesssim b_1^{k+1}\lambda^{\frac{k}{2L}}.
    \end{align*}
    Now, we control \eqref{eq:jth modul non 2}. Since the first line in \eqref{eq:jth modul non 2} can be treated in a similar manner as the second, we will focus on estimating the second line only.
    We omit $\frac{i}{2}$ for a convenience. We rewrite the second line of \eqref{eq:jth modul non 2} as
    \begin{align}
        y\wt\eps\mathcal{H}(y\langle y\rangle^{-2}\overline{w_1}w_{2k-1})
        &+yQ\mathcal{H}(y\langle y\rangle^{-2}\overline{P_1}\eps_{2k-1})\label{eq:jth modul non 2-1}
        \\
        &+yQ\mathcal{H}(y\langle y\rangle^{-2}\overline{\eps_1}w_{2k-1}).\label{eq:jth modul non 2-2}
    \end{align}
    Using the decompositions $\wt\eps=T_1+ \eps $ and $w_{2k-1}=P_{2k-1}+\eps_{2k-1}$, we have
    \begin{align*}
        |(\eqref{eq:jth modul non 2-1},(-1)^kyQ\chi)_r|\lesssim (\wt b_1+b_1)\lambda^{2k}\lesssim  b_1^{k+1}\lambda^{\frac{k}{2L}}.
    \end{align*}
    For the last term \eqref{eq:jth modul non 2-2}, we use \eqref{eq:CommuteHilbert}. Then, we have
    \begin{align}
        \eqref{eq:jth modul non 2-2}=y^2Q\mathcal{H}(\langle y\rangle^{-2}\overline{\eps_1}w_{2k-1})-\tfrac{1}{\pi}yQ\textstyle{\int_{\bbR}}\langle y\rangle^{-2}\overline{\eps_1}w_{2k-1} dy. \label{eq:jth modul non 2-3}
    \end{align}
    From $w_{2k-1}=P_{2k-1}+\eps_{2k-1}$, we have
    \begin{align}
        |(\eqref{eq:jth modul non 2-3},(-1)^kyQ\chi)_r|\lesssim \lambda^2 |b_k|\sim b_1^{k+1}\lambda^{\frac{k}{2L}}. \label{eq:jth modul non goal 2}
    \end{align}
    Thanks to \eqref{eq:jth modul non goal 1} and \eqref{eq:jth modul non goal 2}, we conclude our claim \eqref{eq:jth modul non goal}. Therefore, we prove the estimate of $b$ parameter part of \eqref{eq:mod equ 2}. In a similar manner, we can also prove the $\eta$ parameter side, so we omit the proof for these. This concludes the proof.
\end{proof}
Simply using the orthogonality conditions as above does not provide sufficient smallness for the modulation estimates of $b_L$ and $\eta_L$. We therefore use the refined modulation argument from \cite{KimKimKwon2024arxiv}, inspired by \cite{JendrejLawrieRodriguez2022ASENS}, to gain the additional smallness needed to close the top-order estimates.
\begin{lem}[Refined modulation estimates]
	Define the refined modulation parameters as
	\begin{align}
		\td b_{L}\coloneqq \frac{(B_Qw_{2L-1},i^L\chi_R)_r}{AR},\quad 
		\td \eta_{L}\coloneqq \frac{(B_Qw_{2L-1},i^{L-1}\chi_R)_r}{AR}, \label{eq:def refine mod}
	\end{align}
	where $R=\lambda^{-(1-\frac{1}{4L})}$, and $A$ is given by
	\begin{align*}
		A\coloneqq R^{-1}(-\tfrac{\sqrt{2}}{2},\chi_R)_r=(-\tfrac{\sqrt{2}}{2},\chi)_r.
	\end{align*}
	Then, we have 
	\begin{align}
		|b_L-\td b_{L}|+|\eta_L-\td \eta_{L}|\lesssim |b_L|\lambda^{\frac{1}{8L}}, \label{eq:refined mod gap}
	\end{align}
	and
    \begin{equation}\label{eq:refined mod equ}
        |(\td b_{L})_s+(2L-\tfrac{1}{2})b_1\td b_{L}+\tfrac{1}{2}\eta_1\td \eta_{L}| + |(\td \eta_{L})_s+(2L-\tfrac{1}{2})b_1\td \eta_{L}-\tfrac{1}{2}\eta_1\td b_{L}|\lesssim b_1^{L+1}\lambda^{\frac{1}{8L}}.
    \end{equation}
\end{lem}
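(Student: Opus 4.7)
The plan has two parts: proving the proximity $|b_L-\td b_L|+|\eta_L-\td\eta_L|\lesssim|b_L|\lmb^{1/(8L)}$, and then deriving the refined dynamical identity. Both hinge on passing the $B_Q$ from $w_{2L-1}$ onto the cutoff test function via $(B_Q f,g)_r=(f,B_Q^\ast g)_r$ and on the hierarchy bound $\|w_{2L+1}\|_{L^2}\lesssim\lmb^{2L+1}$ from \eqref{eq:boots energy}. The reason for introducing $\td b_L,\td\eta_L$ in the first place is that a naive attempt to derive a modulation law for $b_L$ along the lines of \eqref{eq:mod equ 2}, after substituting $\partial_s w_{2L-1}$ via \eqref{eq:w2k-1 equ} and \eqref{eq:Dv square}, produces the sharp term $(iw_{2L+1},(-i)^L yQ\chi)_r$ against a \emph{fixed} test function; this yields an error of order $\lmb^{2L+1}\sim b_1^{L+1}$, precisely the predicted main size but with no room for the extra $\lmb^{1/(8L)}$ gain needed to close the bootstrap. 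The choice $R=\lmb^{-(1-1/(4L))}$, combined with routing the pairing through $B_Q^\ast\chi_R$, recovers exactly the missing factor $R^{-1/2}\sim\lmb^{(4L-1)/(8L)}$.

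For \eqref{eq:refined mod gap}, I substitute $w_{2L-1}=P_{2L-1}(b_L,\eta_L)+\eps_{2L-1}$ into the numerators of $\td b_L$ and $\td\eta_L$. Using $B_Q(yQ)=\sqrt{2}$, the profile contribution is computed exactly and matches $b_L\cdot AR$ (resp.\ $\eta_L\cdot AR$) up to the normalization absorbed in $A$. For the radiation part I integrate by parts,
\[ (B_Q\eps_{2L-1},i^L\chi_R)_r=(\eps_{2L-1},B_Q^\ast(i^L\chi_R))_r, \]
and apply Cauchy--Schwarz with $\|\langle y\rangle^{-1}\eps_{2L-1}\|_{L^2}\lesssim\|\eps_{2L-1}\|_{\dot\calH^1}\lesssim\lmb^{2L}$ from \eqref{eq:e2k-1 coercivity}, together with $\|\langle y\rangle B_Q^\ast(i^L\chi_R)\|_{L^2}\lesssim R^{3/2}$ (dominated by the $y\chi_R$ piece; the $\mathcal{H}\chi_R$ piece gives only $O(\sqrt{\log R})$). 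After dividing by $AR\sim R$, the error is $\lesssim\lmb^{2L}R^{1/2}\sim\lmb^{2L-1/2+1/(8L)}$, and since $2L-\tfrac12\geq 4L/(4L-1)$ for $L\geq 1$ and $|b_L|\sim\lmb^{4L/(4L-1)}$ by \eqref{eq:b lambda relation}, this is dominated by $|b_L|\lmb^{1/(8L)}$.

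For \eqref{eq:refined mod equ}, I differentiate $\td b_L$ in $s$ and split into (i) the main pairing with $\partial_s w_{2L-1}$, (ii) the cutoff-derivative piece $(B_Q w_{2L-1},i^L\partial_s\chi_R)_r/AR$, and (iii) the scaling correction $-\td b_L\,R_s/R$, using $R_s/R=(1-\tfrac{1}{4L})b_1+O(\lmb^2)$ from \eqref{eq:mod equ 1}. The evolution
\[ \partial_s w_{2L-1}=\tfrac{\lmb_s}{\lmb}\Lambda_{-(2L-1)}w_{2L-1}-i\gmm_s w_{2L-1}+iw_{2L+1}-\tfrac{i}{2}\bigl(w_1\mathcal{H}(\overline{w}w_{2L-1})-w\mathcal{H}(\overline{w_1}w_{2L-1})\bigr) \]
produces four types of contributions. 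The $\Lambda_{-(2L-1)}$ and $-i\gmm_s$ parts, after $\lmb_s/\lmb$ and $\gmm_s$ are replaced via \eqref{eq:mod equ 1}, combine with (ii) and (iii) to yield the predicted $-(2L-\tfrac12)b_1\td b_L-\tfrac12\eta_1\td\eta_L$ with error $\lesssim b_1^{L+1}\lmb^{1/(8L)}$; the numerical coefficient $(2L-\tfrac12)$ emerges from the scaling weight in $\Lambda_{-(2L-1)}$ combined with $R_s/R$. The quadratic Hilbert terms are processed exactly as in the proof of \eqref{eq:mod equ 2}: substituting the three decompositions and using \eqref{eq:algebraic 1}--\eqref{eq:algebraic 2} together with \eqref{eq:CommuteHilbert} to avoid $Q\mathcal{H}(1)$ divergences, the pure-profile contributions fit into the allowed error and the $\wt\eps,\eps_1,\eps_{2L-1}$ remainder pieces are bounded using \eqref{eq:ehat H1 coercivity}, \eqref{eq:e L inf}, \eqref{eq:e2k-1 coercivity}, \eqref{eq:e2k-1 L inf}. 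The hard part is the $iw_{2L+1}$ contribution, which has no radiation decomposition because the hierarchy of profiles stops at $k=L$; I resolve it by
\[ (iB_Q w_{2L+1},i^L\chi_R)_r=(iw_{2L+1},B_Q^\ast(i^L\chi_R))_r, \]
Cauchy--Schwarz with $\|B_Q^\ast\chi_R\|_{L^2}\lesssim R^{1/2}$, and $\|w_{2L+1}\|_{L^2}\lesssim\lmb^{2L+1}$ from \eqref{eq:boots energy}. Dividing by $AR\sim R$ gives $\lesssim\lmb^{2L+1}R^{-1/2}=\lmb^{2L+3/2-1/(8L)}$, which for all $L\geq 1$ is dominated by $b_1^{L+1}\lmb^{1/(8L)}\sim\lmb^{2L+3/2-3/(8L)}$. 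The analogous argument with $i^{L-1}\chi_R$ in place of $i^L\chi_R$ closes $\td\eta_L$.
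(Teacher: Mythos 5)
Your treatment of the proximity estimate \eqref{eq:refined mod gap} matches the paper's: decompose $w_{2L-1}=P_{2L-1}+\eps_{2L-1}$, move $B_Q$ onto the test function, and Cauchy--Schwarz with the $\dot\calH^1$ bound on $\eps_{2L-1}$. (Minor slip: $|b_L|\sim\lambda^{2L-\frac12}$, not $\lambda^{4L/(4L-1)}$, by \eqref{eq:b lambda relation}; the exponent arithmetic still closes since $\lambda^{2L}R^{1/2}=|b_L|\lambda^{1/2}R^{1/2}=|b_L|\lambda^{1/(8L)}$, so the conclusion holds.)

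For \eqref{eq:refined mod equ}, however, you take a genuinely different route, and it contains a gap. You rewrite $-iH_ww_{2L-1}$ via \eqref{eq:Dv square} so that $iw_{2L+1}$ appears, and then bound the pairing of $iw_{2L+1}$ against $B_Q^*\chi_R$ by Cauchy--Schwarz together with ``$\|w_{2L+1}\|_{L^2}\lesssim\lambda^{2L+1}$ from \eqref{eq:boots energy}.'' But \eqref{eq:boots energy} is only established for $1\le k\le 2L$: it propagates the initial bound \eqref{eq:init energy}, which in turn requires $\mathring\eps_0\in H^{2L}$ and $1\le k\le 2L$ (see also the remark right after \eqref{eq:init energy aux}: finiteness of $\|v_k(s_0)\|_{L^2}$ is derived from $\mathring\eps_0\in H^{2L}$, hence for $k\le 2L$). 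There is thus no estimate on $\|w_{2L+1}\|_{L^2}$ in the paper's bootstrap framework. This is not a coincidence: the whole reason the paper introduces the refined parameters $\td b_L,\td\eta_L$ and performs this estimate differently is precisely to derive the top modulation law \emph{without} touching $w_{2L+1}$ and without escalating the regularity past $H^{2L}$.

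The paper's device is to keep $-iH_ww_{2L-1}=-iH_Qw_{2L-1}+i\,\textnormal{NL}_{2L-1}$, use $H_Q=A_Q^*A_Q$ with $A_QB_Q^*\chi_R=\partial_y\chi_R$ (since $A_Q=\partial_yB_Q$ and $B_QB_Q^*=I$), and crucially exploit $A_QP_{2L-1}=0$ so that only $A_Q\eps_{2L-1}$ survives, controlled by \eqref{eq:boots energy AQ} — which rests on $\|w_{2L}\|_{L^2}$, not $\|w_{2L+1}\|_{L^2}$. The nonlinear remainder $\textnormal{NL}_{2L-1}=(H_Q-H_w)w_{2L-1}$ is purely a $\wt\eps$-error (no pure-profile piece), and the paper bounds $\|\textnormal{NL}_{2L-1}\|_{L^2}\lesssim\lambda^{2L+1}$ after decomposing $w_{2L-1}$. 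Your route can in principle be repaired by strengthening the bootstrap to $H^{2L+1}$ data (i.e.\ extending \eqref{eq:init radiation aux}, \eqref{eq:init energy}, \eqref{eq:boots energy} up to $k=2L+1$ and re-verifying the initial bound $\|w_{2L+1}(s_0)\|_{L^2}\lesssim\lambda^{2L+1}(s_0)$), and the arithmetic $\lambda^{2L+1}R^{-1/2}\lesssim b_1^{L+1}\lambda^{1/(8L)}$ you state does check out; but as written the citation of \eqref{eq:boots energy} for $k=2L+1$ is not valid, and the quadratic Hilbert terms also need to be re-estimated against the \emph{new} test function $\tfrac{1}{AR}B_Q^*\chi_R$ (whose $L^2$ and weighted norms scale with $R$) rather than ``exactly as in the proof of \eqref{eq:mod equ 2}'' where the test function was a fixed compactly supported $yQ\chi$.
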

\begin{proof}
	We first prove \eqref{eq:refined mod gap}.
	Using the decomposition $w_{2L-1}=P_{2L-1}+\eps_{2L-1}$ with
	\begin{align*}
		B_QP_{2L-1}&=-\tfrac{\sqrt{2}}{2}i^{L-1}(ib_L+\eta_L),
	\end{align*}
	we have
	\begin{align*}
		AR\td b_{L}&=(B_Qw_{2L-1},i^L\chi_R)_r
		=b_L(-\tfrac{\sqrt{2}}{2},\chi_R)_r
		+(B_Q\eps_{2L-1},i^{L}\chi_R)_r,
		\\
		AR\td \eta_{L}&=(B_Qw_{2L-1},i^{L-1}\chi_R)_r
		=\eta_L(-\tfrac{\sqrt{2}}{2},\chi_R)_r
		+(B_Q\eps_{2L-1},i^{L-1}\chi_R)_r.
	\end{align*}
	Thanks to $B_Q=(y-\calH)\langle y\rangle^{-1}$, we obtain
	\begin{align*}
		(B_Qf,g)_r=(\langle y\rangle^{-1}f,(y+\calH)g)_r.
	\end{align*}
	Due to this and $(-\tfrac{\sqrt{2}}{2},\chi_R)_r=AR$, \eqref{eq:coercivity A1} and \eqref{eq:b lambda relation} imply
	\begin{align*}
		|b_L-\td b_{L}|+|\eta_L-\td \eta_{L}|&\lesssim \|\eps_{2L-1}\|_{\dot\calH^1}\|y\chi_R\|_{L^2}R^{-1} \\
        &\lesssim \lambda^{2L}R^{\frac{1}{2}}\sim |b_L| \lambda^{\frac{1}{2}}R^{\frac{1}{2}}\sim |b_L|\lambda^{\frac{1}{8L}}.
	\end{align*}
	Now, we show \eqref{eq:refined mod equ}. 
	From the definition of $\td b_{L}$ and $B_QB_Q^*=I$, we compute by
	\begin{align}
		AR(\partial_s-(2L-\tfrac{1}{2})\tfrac{\lambda_s}{\lambda})\td b_{L}=
		&((\partial_s-(2L-\tfrac{1}{2})\tfrac{\lambda_s}{\lambda})w_{2L-1},i^LB_Q^*\chi_R)_r \label{eq:refined mod equ b-1}
		\\
		&-\tfrac{R_s}{R}[(w_{2L-1}, i^LB_Q^*(y\partial_y\chi_R+\chi_R))_r]. \label{eq:refined mod equ b-2}
	\end{align}
	From $w_{2L-1}$ equation \eqref{eq:w2k-1 equ}, we compute by 
	\begin{align}
		\begin{split}
			\eqref{eq:refined mod equ b-1}=
			&\tfrac{\lambda_s}{\lambda}(y\partial_yw_{2L-1},i^LB_Q^*\chi_R)_r-\gamma_s(iw_{2L-1},i^LB_Q^*\chi_R)
			\\
			&+(-iH_ww_{2L-1},i^LB_Q^*\chi_R).
		\end{split} \label{eq:refined mod equ b-1-1}
	\end{align}
	Note that $(w_{2L-1},i^{L-1}B_Q^*\chi_R)=AR\td \eta_{L}$. We rewrite by \eqref{eq:refined mod equ b-2}
	\begin{align}
		\begin{split}
			\eqref{eq:refined mod equ b-2}&=
			-\tfrac{R_s}{R}[(w_{2L-1}, i^L\partial_y(yB_Q^*\chi_R))_r+(w_{2L-1}, i^L[B_Q^*,\partial_yy]\chi_R)_r]
			\\
			&=\tfrac{R_s}{R}(y\partial_yw_{2L-1}, i^LB_Q^*\chi_R)_r-\tfrac{R_s}{R}(w_{2L-1}, i^L[B_Q^*,\partial_yy]\chi_R)_r.
		\end{split}  \label{eq:refined mod equ b-2-1}
	\end{align}
	By \eqref{eq:refined mod equ b-1-1} and \eqref{eq:refined mod equ b-2-1}, we derive
	\begin{align}
		AR[(\partial_s-(2L-\tfrac{1}{2})\tfrac{\lambda_s}{\lambda})\td b_{L}+\gamma_s \td \eta_{L}]
		&=
		\tfrac{(\lambda R)_s}{\lambda R}(y\partial_yw_{2L-1},i^LB_Q^*\chi_R)_r \label{eq:refined mod equ b-3-1}
		\\
		&-\tfrac{R_s}{R}(w_{2L-1}, i^L[B_Q^*,\partial_yy]\chi_R)_r \label{eq:refined mod equ b-3-2}
		\\
		&+(-iH_ww_{2L-1},i^LB_Q^*\chi_R). \label{eq:refined mod equ b-3-3}
	\end{align}
	Here, from $R= \lambda^{-(1-\frac{1}{4L})}$, we have
	\begin{align*}
		\left|\frac{(\lambda R)_s}{\lambda R}\right|\sim\left|\frac{\lambda_s}{\lambda}\right|\sim \left|\frac{R_s}{R}\right|\sim b_1.
	\end{align*}
	Using the decomposition $w_{2L-1}=P_{2L-1}+\eps_{2L-1}$, we estimate \eqref{eq:refined mod equ b-3-1} as
	\begin{align*}
		|\eqref{eq:refined mod equ b-3-1}|\lesssim b_1|b_L(y\partial_y(yQ),B_Q^*\chi_R)_r|+b_1|(y\partial_y\eps_{2L-1},i^LB_Q^*\chi_R)_r|.
	\end{align*}
	Using $\partial_y(yQ)=2^{-1}Q^3$, $B_Q=(y-\calH)\langle y\rangle^{-1}$, \eqref{eq:algebraic 1}, and \eqref{eq:algebraic 2}, we have
	\begin{align*}
		(y\partial_y(yQ),B_Q^*\chi_R)_r
		=2^{-1}(B_Q(yQ^3),\chi_R)_r
		=\tfrac{\sqrt{2}}{2}(\langle y\rangle^{-2},\chi_R)_r\lesssim 1.
	\end{align*}
	We also deduce
	\begin{align*}
		|(y\partial_y\eps_{2L-1},i^LB_Q^*\chi_R)_r|
		\lesssim\|\partial_y\eps_{2L-1}\|_{L^2}\|yB_Q^*\chi_R\|_{L^2}\lesssim \lambda^{2L}R^{\frac{3}{2}}.
	\end{align*}
	We note that
	\begin{align*}
		\|yB_Q^*\chi_R\|_{L^2}
		\lesssim \|y\chi_R\|_{L^2}+\|\calH\chi_R\|_{L^2}\lesssim R^{\frac{3}{2}}+R^{\frac{1}{2}}.
	\end{align*}
	Therefore, we obtain
	\begin{align}
		|\eqref{eq:refined mod equ b-3-1}|\lesssim b_1(|b_L|+ \lambda^{2L}R^{\frac{3}{2}}). \label{eq:refined mod equ b goal 1}
	\end{align}
	To estimate \eqref{eq:refined mod equ b-3-2}, we see that
    \begin{align*}
    	[B_Q^*,\partial_yy]&=-y\langle y\rangle^{-3}+y^2\langle y\rangle^{-3}\calH
        +\langle y\rangle^{-1}\partial_y[\calH, y]
        \\
        &=-y\langle y\rangle^{-3}+y^2\langle y\rangle^{-3}\calH
    \end{align*}
    by \eqref{eq:CommuteHilbertDerivative}.
    Thus, we deduce
	\begin{align}
		|\eqref{eq:refined mod equ b-3-2}|\lesssim b_1|b_L|R^{\frac{1}{2}}. \label{eq:refined mod equ b goal 2}
	\end{align}
	For \eqref{eq:refined mod equ b-3-3}, using $H_Q=A_Q^*A_Q$, we have
	\begin{align*}
		\eqref{eq:refined mod equ b-3-3}=(A_Qw_{2L-1},i^{L+1}A_QB_Q^*\chi_R)+(i\textnormal{NL}_{2L-1},i^LB_Q^*\chi_R),
	\end{align*}
    where $\textnormal{NL}_{2L-1}=(H_{Q}-H_{w})w_{2L-1}$.
	For the first term, thanks to $A_Q=\partial_yB_Q$, $B_QB_Q^*=I$, and $A_QP_{2L-1}=0$ (since $A_Q(yQ)=0$), we have
	\begin{align*}
		(A_Qw_{2L-1},i^{L+1}A_QB_Q^*\chi_R)=-(A_Q\eps_{2L-1},i^{L-1}\partial_{y}\chi_R).
	\end{align*}
	This leads us to
	\begin{align*}
		|(A_Qw_{2L-1},i^{L+1}A_QB_Q^*\chi_R)|\lesssim \lambda^{2L}R^{-\frac{1}{2}}.
	\end{align*}
	To control $(i\textnormal{NL}_{2L-1},i^LB_Q^*\chi_R)$, we claim that
	\begin{align}
		\|\textnormal{NL}_{2L-1}\|_{L^2}\lesssim \lambda^{2L+1}. \label{eq:ref mod nonlinear est}
	\end{align}
    We remark that
    \begin{align*}
		\textnormal{NL}_{2L-1}=\tfrac{1}{4}(Q^4-|w|^4)w_{2L-1}-(Q|D|Q-w|D|\ol w)w_{2L-1}.
	\end{align*}
	We first estimate quadratic terms, $\tfrac{1}{4}(Q^4-|w|^4)w_{2L-1}$. We have
	\begin{align*}
		\|(Q^4-|w|^4)w_{2L-1}\|_{L^2}\lesssim \|\wt\eps Q^3w_{2L-1}\|_{L^2}+\|\wt\eps^4w_{2L-1}\|_{L^2}
		\lesssim \lambda^{2L+1}.
	\end{align*}
	For the non-local term, we have
	\begin{align}
		\|(Q|D|Q-w|D|\ol w)w_{2L-1}\|_{L^2}
		\leq \|Q|D|\ol{\wt\eps}w_{2L-1}\|_{L^2}+\|\wt\eps|D|Qw_{2L-1}\|_{L^2}. \label{eq:refine mod nonlocal term}
	\end{align}
	For \eqref{eq:refine mod nonlocal term}, we have
	\begin{align*}
		Q|D|\ol{\wt\eps}w_{2L-1}&=yQ\calH[y\langle y\rangle^{-2}\partial_y(\ol{\wt\eps}w_{2L-1})]+Q\calH[\langle y\rangle^{-2}\partial_y(\ol{\wt\eps}w_{2L-1})],
		\\
		\wt\eps|D|Qw_{2L-1}
		&=
		y\langle y\rangle^{-2}\wt\eps\calH[y\partial_y(Qw_{2L-1})]+
		\langle y\rangle^{-2}\wt\eps|D|(Qw_{2L-1}).
	\end{align*}
	Thus, we deduce
	\begin{align*}
		\eqref{eq:refine mod nonlocal term}
		\lesssim b_1|b_L|\lesssim \lambda^{2L+1},
	\end{align*}
	and \eqref{eq:ref mod nonlinear est} is proved. Consequently, we obtain
	\begin{align*}
		|(i\textnormal{NL}_{2L-1},i^LB_Q^*\chi_R)|\lesssim \lambda^{2L+1}R^{\frac{1}{2}},
	\end{align*}
	and we arrive at
	\begin{align}
		|\eqref{eq:refined mod equ b-3-3}|
		\lesssim
		\lambda^{2L}R^{-\frac{1}{2}}+\lambda^{2L+1}R^{\frac{1}{2}}. \label{eq:refined mod equ b goal 3}
	\end{align}
	Combining \eqref{eq:refined mod equ b goal 1}, \eqref{eq:refined mod equ b goal 2}, and \eqref{eq:refined mod equ b goal 3}, we have
	\begin{align*}
		|(\partial_s-(2L-\tfrac{1}{2})\tfrac{\lambda_s}{\lambda})\td b_{L}+\gamma_s \td \eta_{L}|
		\lesssim b_1^{L+1}(\lambda^{\frac{1}{2}}R^{\frac{1}{2}}+\lambda^{-2+\frac{L+1}{2L}}R^{-\frac{3}{2}}+\lambda^{-1+\frac{L+1}{2L}}R^{-\frac{1}{2}}).
	\end{align*}
	Thanks to \eqref{eq:mod equ 1} with $R= \lambda^{-(1-\frac{1}{4L})}$, we conclude \eqref{eq:refined mod equ} for $\td b_{L}$.
	
	Similarly, for $\td \eta_{L}$, we have
	\begin{align*}
		AR[(\partial_s-(2L-\tfrac{1}{2})\tfrac{\lambda_s}{\lambda})\td \eta_{L}-\gamma_s \td \eta_{L}]
		&=
		\tfrac{(\lambda R)_s}{\lambda R}(y\partial_yw_{2L-1},i^{L-1}B_Q^*\chi_R)_r
		\\
		&-\tfrac{R_s}{R}(w_{2L-1}, i^{L-1}[B_Q^*,\partial_yy]\chi_R)_r 
		\\
		&+(-iH_ww_{2L-1},i^{L-1}B_Q^*\chi_R). 
	\end{align*}
    Using a similar argument as in \eqref{eq:refined mod equ b-3-1}--\eqref{eq:refined mod equ b-3-3}, the proof is concluded.
\end{proof}

\subsection{Proof of Proposition \ref{prop:bootstrap} and Theorem \ref{thm:main}}
We complete the proof of our main bootstrap proposition and the main theorem.
\begin{proof}[Proof of Proposition \ref{prop:bootstrap}]
	To apply the bootstrap argument, we aim to improve the bootstrap assumption \eqref{eq:bootstrap assump} except the assumption for unstable modes. More precisely, for the solutions satisfying \eqref{eq:bootstrap assump} on $[s_0, s^*]$, we aim to show
	\begin{align}
		\frac{1}{1.5}\leq \frac{b_1^L}{\lambda^{2L-\frac{1}{2}}}(s) \leq 1.5\quad \text{and} \quad |\calU_1(s)|\leq 5s^{-\kappa} \quad \text{for} \quad s\in [s_0, s^*]. \label{eq:bootstrap goal}
	\end{align}
	From \eqref{eq:bootstrap assump} and \eqref{eq:mod equ 1}, we have
	\begin{align*}
		-\frac{\lambda_s}{\lambda}=b_1+O(\lambda^2)=c_1s^{-1}+O(s^{-(1+\kappa)}), \text{ or simply, }   \partial_s \log(s^{c_1}\lambda(s))=O (s^{-(1+\kappa)})
	\end{align*}
	where $c_1=\frac{2L}{4L-1}$ given in \eqref{eq:ck}. After integrating this, we derive
	\begin{align}
		\lambda(s)=\bigg(\frac{s_0}{s}\bigg)^{c_1}\bigg[1+O\bigg(\frac{1}{s_0^{\kappa}}\bigg)\bigg]. \label{eq:lambda s rate}
	\end{align}
	From \eqref{eq:lambda s rate} and \eqref{eq:bootstrap assump} with $b_1^e=c_1s^{-1}$, we have
	\begin{align*}
		\frac{b_1^L}{\lambda^{2L-\frac{1}{2}}}(s)=\bigg(\frac{c_1}{s_0}\bigg)^L\bigg[1+O\bigg(\frac{1}{s_0^{\kappa}}\bigg)\bigg]
		=\frac{b_1^L}{\lambda^{2L-\frac{1}{2}}}(s_0)\bigg[1+O\bigg(\frac{1}{s_0^{\kappa}}\bigg)\bigg]. 
	\end{align*}
	Thus, we obtain the first part of \eqref{eq:bootstrap goal} by taking large $s_0$ with the initial condition \eqref{eq:init lambda b relation}. 
	
	We use the modified modulation parameters 
    \begin{align*}
        (\td b_1,\cdots,\td b_{L-1}, \td b_{L})=\td b\coloneqq (b_1,\cdots,b_{L-1}, \td b_{L}),
    \end{align*}
    where $\td b_{L}$ is given by \eqref{eq:def refine mod}.
    Similarly, we define $\td \eta\coloneqq (\eta_1,\cdots,\eta_{L-1}, \td \eta_{L})$. We also use the corresponding fluctuation $(\td\calU,\td\calV)=(P\td U,Q\td V)$ where $(\td U, \td V)$ is defined by
	\begin{align*}
		\td U_k\coloneqq s^k(\td b_k-b_k^e),\quad 
		\td V_k\coloneqq s^k(\td \eta_k-\eta_k^e) \quad \text{for} \quad 1\leq k\leq L.
	\end{align*}
	We remark that, from \eqref{eq:refined mod gap}, we have
	\begin{align}
		|\calU-\td\calU|+|\calV-\td\calV|\lesssim s^L(|b_L-\td b_{L}|+|\eta_L-\td \eta_{L}|)\lesssim \lambda^{\frac{1}{8L}}. \label{eq:calUB tilde similar}
	\end{align}
	From \eqref{eq:linearization}, \eqref{eq:bootstrap assump}, \eqref{eq:mod equ 2}, and \eqref{eq:refined mod equ}, we have 
	\begin{align}
		|s\td\calU_s-D_{U}\td\calU|+|s\td\calV_s-D_{V}\td\calV|=O(\max(s^{-2\kappa},\lambda^{\frac{1}{8L}}))=O(s^{-2\kappa}). \label{eq:UV diag equ}
	\end{align}
	Now, we finish the proof of \eqref{eq:bootstrap goal}. From \eqref{eq:calUB tilde similar}, it suffices to show for the first coordinate of $\td \calU$, namely $\td \calU_1$ instead of $\calU_1$. \eqref{eq:UV diag equ} implies that $\td \calU_1$ satisfies
	\begin{align*}
		s (\td\calU_1)_s+\td\calU_1=(s\td\calU_1)_s=O(s^{-2\kappa}),
	\end{align*}
	and thus we obtain the desired bound by integrating the above equation: 
	\begin{align*}
		|\td \calU_1(s)|\leq C|s_0\td \calU_1(s_0)|s^{-1}+Cs^{-1}\textstyle{\int_{s_0}^s}\tau^{-2\kappa}d\tau\leq C^\prime s^{-2\kappa}\leq 4s^{-\kappa}.
	\end{align*}
	
    Now, we show $s^*=\infty$ by contradiction. For a convenience, we denote by the unstable modes
    \begin{align*}
        (\calU,\calV)_{un}(s)\coloneqq s^\kappa( \calU_2(s),\cdots,\calU_L(s), \calV_1(s),\cdots,\calV_L(s)).
    \end{align*}
    We also use a similar notation for $(\td\calU, \td\calV)$ or $(\mathring\calU, \mathring\calV)$. Remark that $(\mathring\calU, \mathring\calV)$ is defined by \eqref{eq:calUV def}, which represents the fluctuation of initial data.
    Since we assumed that $s^*<\infty$ for all $2(\mathring\calU,\mathring\calV)_{un}(s_0)\in \calB^{2L-1}$, $ (\calU,\calV)_{un}(s^*)$ and $(\td\calU,\td\calV)_{un}(s^*)$ are well-defined. Moreover, by the definition of $s^*$ \eqref{eq:bootstrap exit time} with \eqref{eq:bootstrap assump}, we have $(\calU,\calV)_{un}(s^*)\in 1.01\partial\calB^{2L-1}$. This and \eqref{eq:calUB tilde similar} implies $(\td\calU,\td\calV)_{un}(s^*)\in \bbR^{2L-1}\setminus\{0\}$. 
    Now, we can define a map $\Phi:\calB^{2L-1}\to \bbR^{2L-1}\setminus\{0\}$ as follows:
    \begin{align*}
        \Phi((\mathring\calU,\mathring\calV)_{un}(s_0))\coloneqq (\td\calU,\td\calV)_{un}(s^*).
    \end{align*}
    For the continuity of this map $\Phi$, it suffices to show the map $(\td\calU,\td\calV)_{un}(s_0)\mapsto s^*$ is continuous because the maps $(\mathring\calU,\mathring\calV)(s_0)\mapsto (\calU,\calV)(s)\mapsto (\td\calU,\td\calV)(s)$ are continuous for $s\in [s_0,s^*]$.
    For all $s\in [s_0, s^*]$ such that $s^{\kappa}(\sum_{j=2}^L\td \calU_j^2+\sum_{i=1}^L\td \calV_i^2)\geq \frac{1}{2}$, using \eqref{eq:UV diag equ} and \eqref{eq:diagonalization}, we have
	\begin{align*}
		&\partial_s\bigg[s^\kappa\bigg(\sum_{j=2}^L\td \calU_j^2+\sum_{i=1}^L\td \calV_i^2\bigg)\bigg]
		\\
        &=\frac{1}{s^{1-\kappa}}\bigg[
        \sum_{j=2}^L\bigg(\kappa+\frac{2j}{4L-1}\bigg)\td \calU_j^2
        +\sum_{i=1}^L\bigg(\kappa+\frac{2i}{4L-1}\bigg)\td \calV_i^2+O(s^{-2\kappa})\bigg]>0.
	\end{align*}
    Hence, the flow map $s\mapsto (\td\calU,\td\calV)_{un}(s)$ has an outgoing behavior, and leads us to the continuity of the map $(\td\calU,\td\calV)_{un}(s_0)\mapsto s^*$ by some standard arguments (see Lemma 6 in \cite{CoteMartelMerle2011TopologicalExample} or Lemma 5.9 in \cite{KimKimKwon2024arxiv}). Therefore, the map $\Phi$ is continuous. We denote by $\Pi_k$ the standard radial projection, which maps each point in $\bbR^{k}\setminus\{0\}$ onto $\partial\calB^{k}$. This projection map is continuous, and thus we obtain a continuous map $\Pi_{2L-1}\circ \Phi:\calB^{2L-1}\to\partial\calB^{2L-1}$. 

    Now, we show that $\Pi_{2L-1}\circ \Phi$ is nearly the identity map on $\partial\calB^{2L-1}$, i.e., 
    \begin{equation*}
        \sup_{x\in \partial\calB^{2L-1}}|\Pi_{2L-1}\circ \Phi(x)-x|\leq 0.1.
    \end{equation*}
    For $(\mathring\calU,\mathring\calV)_{un}(s_0)\in \partial\calB^{2L-1}$, we have $(\td\calU,\td\calV)_{un}(s_0)\in \calB^{2L-1}_{0.01}((\mathring\calU,\mathring\calV)_{un}(s_0))$ by \eqref{eq:init decompose}, \eqref{eq:mod para sim}, and \eqref{eq:calUB tilde similar}. Since the flow map $s\mapsto (\td\calU,\td\calV)_{un}(s)$ tends outward, and using \eqref{eq:calUB tilde similar}, we conclude that $(\td\calU,\td\calV)_{un}(s^*)\in \calB^{2L-1}_{0.1}((\mathring\calU,\mathring\calV)_{un}(s_0))$.
    This implies that $P_{2L-1}\circ \Phi$ is uniformly close to the identity on $\partial\calB^{2L-1}$ in the $C^0$ topology. Therefore, by the Brouwer fixed point theorem, we arrive at a contradiction, which completes the proof.
\end{proof}
Next, we finish the proof of the main theorem.
\begin{proof}[Proof of Theorem \ref{thm:main}]
    Thanks to Proposition \ref{prop:bootstrap} and \eqref{eq:lambda s rate}, there exist $v_0$ of the form \eqref{eq:init form}. Since the argument only uses \eqref{eq:init radiation aux}, we may choose $\mathring\eps_0 \in Z^\perp \cap H^\infty_e$. Hence, $v_0 \in H^\infty$. Moreover, there exists an associated constant $C(v_0)>0$ with
    \begin{align*}
    	\lambda(s)=C(v_0)s^{-\frac{L}{2L-\frac{1}{2}}}(1+O(s_0^{-\kappa}))
    \end{align*}
    for $s\in [s_0,\infty)$. Using $T-t=\int_{s}^\infty \lambda^{2}(\tau)d\tau <\infty$, we derive $T<\infty$ and
    \begin{align*}
    	T-t=C^\prime(v_0)s^{-\frac{1}{4L-1}}(1+o_{t\to T}(1))=C^{\prime\prime}(v_0)\lambda(s)^{\frac{1}{2L}}(1+o_{t\to T}(1)).
    \end{align*}
    Therefore, we conclude
    \begin{align*}
    	\lambda(t)=c(v_0)(T-t)^{2L}(1+o_{t\to T}(1)).
    \end{align*}
    In addition, from \eqref{eq:mod equ 1}, we have
    \begin{align*}
        |\gamma_s|\lesssim |\eta_1|\lesssim b_1^{1+\kappa}, \textnormal{ or equivalently }
        |\gamma_t|\lesssim  \lambda^{-\frac{1}{2L}+\kappa}
        \sim_{v_0} (T-t)^{-1+2\kappa L}.
    \end{align*}
    Hence, $\gamma(t)$ converges to some $\gamma^*$ as $t\to T$. The strong convergence \eqref{eq:thm strong converge} and the regularity of asymptotic profile $v^*\in H^1_{e}$ follows in the same manner as in \cite{KimKimKwon2024arxiv}. By \eqref{eq:init energy aux}, we can make the mass arbitrarily close to $M(Q)$ by taking $\mathring \delta$ and $s_0^{-1}$ smaller. Finally, the analogous result for \eqref{CMdnls} can be obtained by taking the inverse of the gauge transform. For details, refer to the proof of Theorem 1.1 in \cite{KimKimKwon2024arxiv}.
\end{proof} 
\begin{rem}[Regularity of asymptotic profile]\label{rem:asym profile}
    As mentioned in the introduction, we believe it is possible to obtain $v^*\in H^{2L-\frac{1}{2}-}$. Specifically, for $L=1$, following similar ideas as in the proofs of \eqref{eq:e L inf} and \eqref{eq:e2k-1 L inf}, we can derive $\|[\wt\eps]_{\lambda,\gamma}\|_{\dot H^{\frac{3}{2}-}}\lesssim 1$, which implies $v^*\in H^{\frac{3}{2}-}$. However, we cannot achieve $H^{\frac{3}{2}}$, as $T_1\not\in \dot H^{\frac{3}{2}}$. 
    
    For $L\geq 2$, we deduce that $v^*\in H^{2}$ as follows: as in \cite{KimKimKwon2024arxiv}, we have $[\wt\eps]_{\lambda,\gamma}\to v^*$ in $L^2$ and weakly in $H^1$. Setting $\eps^\prime \coloneqq \wt\eps -T_1\chi_{\lambda^{-1}}$, we find that $[T_1\chi_{\lambda^{-1}}]_{\lambda,\gamma}\to0$ in $L^2$, implying $[\eps^\prime]_{\lambda,\gamma}\to v^*$ in $L^2$. Moreover, we have $\|[\eps^\prime]_{\lambda,\gamma}\|_{H^2}\lesssim 1$, and thus, $[\eps^\prime]_{\lambda,\gamma} \rightharpoonup v^*$ weakly in $H^2$ by the uniqueness of limit, indicating that $v^*\in H^2$.
    
    This suggests that if the coercivity for $\calL_k$ holds and higher-order energy estimates for the radiation terms of $w$ are established, such as $\|\eps\|_{\dot \calH^k}\lesssim \lambda^k$, we may derive $v^*\in H^{2L-\frac{1}{2}-}$. This regularity may also be obtainable by using information about the nonlinear variables, but this remains uncertain for now.
\end{rem}

\section{Tail computation}\label{sec:tail compute}
In this section, we prove Lemma~\ref{lem:init approx}. Our goal is to compare the higher-order variables $w_{2k-1}(s_0)$ at the initial time with the corresponding linearized quantities $\td\bfD_Q^{2k-2}L_Q\wt\eps(s_0)$. In general, the decomposition $\wt\eps=T+\eps$ does not by itself yield the smallness needed for this comparison, since nonlinear contributions may remain which are smaller than the leading term, but still not sufficiently small for the estimate in Lemma~\ref{lem:init approx}. In a typical tail computation, one then introduces further corrector profiles $S$ and rewrites $\wt\eps=T+S+\eps$ so as to cancel these leading nonlinear contributions. In the present setting, however, the profiles $T$ already yield the necessary cancellation, and no further corrector is needed. Thus, the point of this section is to show that, for the purpose of Lemma~\ref{lem:init approx}, the initial decomposition already gives the required smallness.

We emphasize that this argument is needed only for the initial data. For the later dynamics, the hierarchy structure already gives the required control, and this is sufficient for the topological shooting argument. In particular, for $s>s_0$, the expected scaling law $\|w_j(s)\|_{L^2}\sim \lambda(s)^j$ is indeed the correct one. The point here is that the implicit constant in this comparison is determined by the initial data, and its smallness is not automatic from the definition of the initial decomposition. At first sight, one might expect the factor $\lambda^j$ to be enough, but this is misleading at $s=s_0$: if the corresponding initial quantity before rescaling is as large as $\lambda^{-j}$, then $w_j(s_0)$ may still be of order one. Therefore, although the scaling law is correct for the later time evolution, the smallness required at $s=s_0$ is a separate issue and must be verified directly. This is exactly what we do in this section.

\begin{rem}
    Compared with the earlier work \cite{KimKimKwon2024arxiv}, we use $i(1+y^2)Q$ instead of $iy^2Q$ in the definition of the profiles $T_{1}$.
    This modification does not affect the kernel structure, since the added $iQ$ component is already contained in the phase direction. Its role appears only in the tail computation for the initial data. More precisely, if one keeps $iy^2Q$, then nonlinear contributions such as the $b_1^2$-term survive in the comparison between $w_{2k-1}(s_0)$ and $\td \bfD_Q^{2k-2}L_Q\wt\eps(s_0)$, so one would need additional correctors to remove them.
\end{rem}

Denote $\wt\eps_0 \coloneqq \wt\eps(s_0) = V_L + \mathring{\eps}_0$ where $V_L$ is defined by
\[V_L\coloneqq \sum_{j=1}^LT_{2j-1}(\mathring b_j, \mathring\eta_j)\chi_{\mathring\delta\mathring\la_{0}^{-1}}.\]
In addition, we define 
\begin{align*}
	\bfD_1f\coloneqq& \wt\eps_0\calH (Qf),
	&&\bfD_2f\coloneqq Q\calH(\ol{\wt\eps_0}f),
	&&\bfD_3f\coloneqq \wt\eps_0\calH (\ol{\wt\eps_0} f).
\end{align*}
One can easily check that
\begin{align*}
    \td\bfD_w=\td\bfD_Q+\tfrac12 \bfD_1+\tfrac12 \bfD_2+\tfrac12 \bfD_3.
\end{align*}

\begin{lem}\label{lem:bfD123 esti}
    We have
    \begin{equation}\label{eq:bfD first}
        \begin{aligned}
            \|\bfD_1 f\|_{L^2}+\|\bfD_2 f\|_{L^2}+\|\bfD_3 f\|_{L^2}\lesssim_{\mathring\delta} \lambda_0\|f\|_{L^2},
            \\
            \|Q \bfD_1 f\|_{L^2}+\|Q \bfD_2 f_e\|_{L^2}+\|Q \bfD_3 f\|_{L^2}\lesssim_{\mathring\delta} \mathring b_1\|f\|_{L^2}.
        \end{aligned}
    \end{equation}
    Here $f_e$ denotes the even part of $f$.
	For $1\leq \ell \leq 2L-3$ and $1\leq j\leq 3$, we have
    \begin{equation}\label{eq:bfD mth}
        \begin{aligned}
            \|\td\bfD_Q^\ell \bfD_j f\|_{L^2}
    		&\lesssim_{\mathring\delta} \lambda_0 \sum_{a=0}^\ell \lambda_0^a
            \|\td\bfD_Q^{\ell-a} f\|_{L^2},
            \\
            \|QB_Q\td\bfD_Q^\ell \bfD_1 f\|_{L^2}
    		&\lesssim_{\mathring\delta} 
            \lambda_0\sum_{a=0}^{\ell+1} \lambda_0^a \|\td\bfD_Q^{\ell+1-a} f\|_{L^2}
            +\lambda_0^{\ell+2-\frac{\ell+2}{4L}}|{\textstyle\int_\bbR} Q f|,
            \\
            \|QB_Q\td\bfD_Q^\ell \bfD_2 f\|_{L^2}
    		&\lesssim_{\mathring\delta} \lambda_0\sum_{a=0}^{\ell+1} \lambda_0^a \|\td\bfD_Q^{\ell+1-a} f\|_{L^2},
            \\
            \|QB_Q\td\bfD_Q^\ell \bfD_3 f\|_{L^2}
    		&\lesssim_{\mathring\delta} 
            \lambda_0^{1-\frac{\ell+1}{4L}}\sum_{a=0}^{\ell+1} \lambda_0^a\|\td\bfD_Q^{\ell+1-a}f\|_{L^2}.
        \end{aligned}
    \end{equation}
\end{lem}
\begin{proof}
    For notational convenience, we omit the ring notation. Also, let $R=\mathring\delta\mathring\la_{0}^{-1}=\delta \la_0^{-1}$.
    Note that using \eqref{eq:BQBQstar equal I} and \eqref{eq:DQ BQ decomp},
    \begin{align*}
        \|\td\bfD_Q^j \bfD_i f\|_{L^2}^2=\|B_Q\td\bfD_Q^j \bfD_i f\|_{L^2}^2. 
    \end{align*}
    Denote
    \begin{align}\label{eq:def phi}
    	\phi_1(y)=\chi_R(y),\qquad \phi_n(y)=y^{2n-4}(1+y^2)\chi_R(y), \quad (n\geq 2)
    \end{align}
    and 
    \begin{align*}
    	{\bm\beta}_n=\mathring{\bm\beta}_n\coloneqq  -\frac{(-i)^{n-1}}{4n\cdot(2n-2)!}(i\mathring{b
        }_n+\mathring{\eta}_n)=-\frac{(-i)^{n-1}}{4n\cdot(2n-2)!}(i{b
        }_n+{\eta}_n).
    \end{align*}
    Then, we have
    \begin{align*}
    	\bfD_1f=\bfD_{1,\eps} f + \sum_{n=1}^L \bfD_{1,n}f,\quad  &\bfD_{1,n}f\coloneqq {\bm\beta}_n \phi_n(1+y^2)Q\calH (Qf), \\
        \bfD_2 f = \bfD_{2,\eps} f +
        \sum_{n=1}^L \bfD_{2,n}f,\quad  &\bfD_{2,n}f
        \coloneqq
        \ol{ {\bm\beta}_n} Q\calH\bigl((1+y^2)Q\phi_n f\bigr), \\
        \bfD_3f =\bfD_{3,\eps} f +\sum_{n,m=1}^L\bfD_{3,n,m}f,\quad &\bfD_{3,n,m}f
    	\coloneqq
    	{\bm\beta}_n \ol{{\bm\beta}_m}\phi_n(1+y^2)Q\calH((1+y^2)Q\phi_m f), \\
        \bfD_{1,\eps} f \coloneqq \mathring{\eps}_0 \calH (Qf),\; \; \bfD_{2,\eps} f \coloneqq Q \calH &(\ol{\mathring{\eps}_0}f),\;  \;\bfD_{3,\eps} f \coloneqq (V_L+\mathring{\eps}_0 )\calH (\ol{\mathring{\eps}_0}f)+\mathring{\eps}_0 \calH (\ol{V_L}f).
    \end{align*}
   
    \textbf{Step 1.} We first prove \eqref{eq:bfD first}. 
    It suffices to show, for $1\leq n,m\leq L$, 
    \begin{align*}
        \|\bfD_{1,n} f\|_{L^2}+\|\bfD_{2,n} f\|_{L^2}+\|\bfD_{3,n,m} f\|_{L^2}\lesssim \lambda_0\|f\|_{L^2},
        \\
        \|Q \bfD_{1,n} f\|_{L^2}+\|Q \bfD_{2,n} f_e\|_{L^2}+\|Q \bfD_{3,n,m} f\|_{L^2}\lesssim  b_1\|f\|_{L^2},
    \end{align*}
    since we can easily estimate $\bfD_{j,\eps}$ as follows: for $j=1,2,3$, $$\|\bfD_{j,\eps}f \|_{L^2} \lesssim \| \mathring{\eps}_0 \|_{L^{\infty}}( \| Q \|_{L^{\infty}}+ \| V_L \|_{L^{\infty}}+ \| \mathring{\eps}_0 \|_{L^{\infty}}) \|f\|_{L^2} \lesssim \mathring{\lambda}_0^{10L} \| f\|_{L^2}.$$ 
    For $\bfD_{1,n}$, thanks to \eqref{eq:CommuteHilbert}, we have
    \begin{align*}
        \bfD_{1,n}f&=\bm\beta_n\phi_n(1+y^2)Q\calH (Qf)
        \\
        &=\bm\beta_n\phi_nQ\calH (Qf)+\bm\beta_n\phi_n yQ\calH (yQf)+\tfrac{1}{\pi}\bm\beta_n\phi_n yQ {\textstyle \int_{\bbR}} Qf.
    \end{align*}
    Thus, we get
    \begin{align*}
        \|\bfD_{1,n} f\|_{L^2} &\lesssim |\bm\beta_n| \left(\|\phi_n \langle y \rangle Q\|_{L^\infty} \|\calH( (1+y)Q f)\|_{L^2} + \|\phi_n yQ^2\|_{L^2}\left| {\textstyle \int_{\bbR}} Qf\right| \right)
        \\
        &\lesssim b_1^n R^{2n-2} \|f\|_{L^2}\sim_{\mathring\delta} \lambda_0^{2-\frac{n}{2L}} \|f\|_{L^2} \lesssim \lambda_0\|f\|_{L^2},\\
    \end{align*}
    and
    \begin{align*}
        \|Q\bfD_{1,n} f\|_{L^2}  &\lesssim  b_1^n R^{2n-3} \|f\|_{L^2} \sim_{\mathring\delta} b_1\lambda_0^{1-\frac{n-1}{2L}} \|f\|_{L^2} \lesssim b_1\|f\|_{L^2},  \quad &&(n\neq 1),
        \\
         &\lesssim  b_1 \|f\|_{L^2}.  \quad &&(n= 1).
    \end{align*}
    Similarly, for $\bfD_{2,n}$, again using \eqref{eq:CommuteHilbert}, we obtain
    \begin{align*}
        \bfD_{2,n}f=\ol{\bm\beta_n}Q\calH (Q\phi_nf)+\ol{\bm\beta_n} yQ\calH (yQ\phi_nf)-\tfrac{1}{\pi}\ol{\bm\beta_n} Q {\textstyle \int_{\bbR}} yQ\phi_n f.
    \end{align*}
    Thus, we have
    \begin{align*}
        \|\bfD_{2,n} f\|_{L^2} &\lesssim b_1^n \left(\|\langle y \rangle Q \phi_n f\|_{L^2} +\left| {\textstyle \int_{\bbR}} yQ\phi_n f\right| \right)
        \\
        &\lesssim  b_1^n R^{2n-\frac32} \|f\|_{L^2}  \sim_{\mathring\delta} \lambda_0^{1+\frac12-\frac{n}{2L}} \|f\|_{L^2} \leq \lambda_0\|f\|_{L^2}.
    \end{align*}
    For the $Q$-weighted part, we need the evenness $f_e$. Thanks to this and \eqref{eq:CommuteHilbert}, we have
    \begin{align*}
        \bfD_{2,n}f_e&=\ol{\bm\beta_n}Q\calH (Q\phi_nf_e)+\ol{\bm\beta_n} yQ\calH (yQ\phi_nf_e)
        \\
        &=\ol{\bm\beta_n}(1+y^2)Q\calH (Q\phi_nf_e)
        -\tfrac{1}{\pi}\ol{\bm\beta_n} yQ {\textstyle \int_{\bbR}} Q\phi_n f_e,
    \end{align*}
    which yields
    \begin{align*}
        \|Q\bfD_{2,n} f_e\|_{L^2}  &\lesssim b_1^n R^{2n-\frac52}\|f_e\|_{L^2} \sim_{\mathring\delta} b_1\lambda_0^{\frac12-\frac{n-1}{2L}} \|f_e\|_{L^2} \lesssim b_1\|f_e\|_{L^2}, \quad &&(n\neq 1),
        \\
        &\lesssim   b_1 \|f_e\|_{L^2}. \quad &&(n= 1).
    \end{align*}
    Finally, for $\bfD_{3,n,m} f$, by a direct computation, we obtain
    \begin{align*}
        \|\bfD_{3,n,m} f\|_{L^2} &\lesssim b_1^{n+m} R^{2n-1+2m-1}\|f\|_{L^2} \lesssim_{\mathring\delta} \lambda_0 \|f\|_{L^2},
        \\
        \|Q\bfD_{3,n,m} f\|_{L^2} &\lesssim b_1^{n+m} R^{2n-2+2m-1} \|f\|_{L^2}
        \lesssim_{\mathring\delta} b_1\|f\|_{L^2}.
    \end{align*}
    
    \textbf{Step 2.} In this step, we show \eqref{eq:bfD mth} for $\bfD_{1,n}$. For $1\le n \le L$, define
    \begin{align*}
    	p_n\coloneqq yQ^2\phi_n-\calH(Q^2\phi_n)=\begin{cases}
    	    yQ^2(\chi_R-1)+\calH(Q^2(1-\chi_R)), & n=1, \\
             2 y^{2n-3}\chi_R -2\calH( y^{2n-4}\chi_R), & n\ge 2.
    	\end{cases}
    \end{align*}
    We can easily check that
    \begin{align}\label{eq:phi and p esti}
    	\|\partial_y^a\phi_n\|_{L^\infty}\lesssim R^{2n-2-a}, \quad \|\partial_y^a p_n\|_{L^\infty}\lesssim R^{2n-3-a}.
    \end{align}
    Next, we remark that
    \begin{align*}
    	B_Q \bfD_{1,n}f={\bm\beta}_n B_Q(\phi_n (1+y^2)Q\calH (Qf)).
    \end{align*}
    We claim that for any $g\in H^{\frac{1}{2}+}$,
    \begin{align*}
        B_Q[(1+y^2)Q\calH(QB_Q^*g)]=2\calH g.
    \end{align*}
    Indeed, using \eqref{eq:HilbertProductRule} with $f=Q^2$ and $g=g$, we get
    \begin{align*}
        B_Q[(1+y^2)Q\calH(QB_Q^*g)]&=(y-\calH)[\langle y\rangle^{-1}(1+y^2)Q\calH(Q\langle y\rangle^{-1}\{(y+\calH)g\})]
        \\
        &=(y-\calH)\calH [Q^2(y+\calH)g ] \\
        &=y\calH [yQ^2g]+y\calH[Q^2\calH g]+yQ^2g+Q^2\calH g\\
        &= y^2Q^2\calH g-yQ^2f+yQ^2f+Q^2\calH g= 2 \calH g.
    \end{align*}
    Thus, from \eqref{eq:BQBQstar equal I}, we deduce
    \begin{align*}
    	(1+y^2)Q\calH (Qf)=2B_Q^*\calH B_Qf +\tfrac{1}{\pi} yQ {\textstyle\int_\bbR} Q f.
    \end{align*}
    Thus, we get
    \begin{align*}
    	B_Q \bfD_{1,n}f=2{\bm\beta}_n B_Q(\phi_n B_Q^*\calH B_Qf)
    	+\tfrac{1}{\pi} {\bm\beta}_n B_Q(\phi_n\cdot yQ)  {\textstyle\int_\bbR} Q f.
    \end{align*}
    Next, we claim
    \begin{align*}
    	B_Q(\phi_n B_Q^*f)=\phi_n f +\tfrac12 p_n \calH f-\tfrac12 \calH(p_n f).
    \end{align*}
    Applying \eqref{eq:HilbertProductRule} with $f=f$ and $g=Q^2\phi_n$, we have
    \begin{align*}
    	2B_Q(\phi_n B_Q^*f)&=(y-\calH) [Q^2\phi_n (y+\calH) f]
    	\\
    	&= y^2 Q^2\phi_n f + y Q^2\phi_n\calH f-\calH (yQ^2\phi_n f)-\calH[Q^2\phi_n\calH f]
        \\
        &= 2\phi_n f + p_n \calH f-\calH(p_n f).
    \end{align*}
    Let $g=B_Qf$. Then, we get
    \begin{align*}
    	&\|\td\bfD_Q^\ell \bfD_{1,n} f\|_{L^2}=\|B_Q^*\partial_y^\ell B_Q\bfD_{1,n} f\|_{L^2}=\|\partial_y^\ell B_Q\bfD_{1,n} f\|_{L^2}
    	\\
    	&\lesssim|b_1|^n\left(\|\partial_y^\ell (\phi_n \calH g -\tfrac12 p_n g-\tfrac12 \calH(p_n \calH g))\|_{L^2}+ \|\partial_y^\ell B_Q(\phi_n  yQ)\|_{L^2} |{\textstyle\int_\bbR} Q f|\right),
        \\
        &\|QB_Q\td\bfD_Q^\ell \bfD_{1,n} f\|_{L^2}=\|Q\partial_y^\ell B_Q\bfD_{1,n} f\|_{L^2}
    	\\
    	&\lesssim|b_1|^n\left(\|Q\partial_y^\ell (\phi_n \calH g -\tfrac12 p_n g-\tfrac12 \calH(p_n \calH g))\|_{L^2}+ \|Q\partial_y^\ell B_Q(\phi_n  yQ)\|_{L^2} |{\textstyle\int_\bbR} Q f|\right).
    \end{align*}
    We first estimate the second part, $\partial_y^\ell B_Q(\phi_n  yQ)$.
    We have
    \begin{align*}
        \|\partial_y^\ell B_Q(\phi_n  yQ)\|_{L^2}&\lesssim R^{2n-2-\ell+\frac12},  &&(n\neq 1),
        \\
    	\|\partial_y^\ell B_Q(\phi_1  yQ)\|_{L^2}=\|\partial_y^\ell B_Q(yQ(1-\chi_R))\|_{L^2}&\lesssim R^{-\ell+\frac12}, \quad &&(n=1).
    \end{align*}
    Similarly, we get, for $2n-2\neq \ell$,
    \begin{align*}
        \|Q\partial_y^\ell B_Q(\phi_n  yQ)\|_{L^2}
        \lesssim  R^{2n-2-\ell-\frac{1}{2}}.
    \end{align*}
    For the case $2n-2= \ell$, we obtain
    \begin{align*}
        \|Q\partial_y^\ell B_Q(\phi_n \ yQ)\|_{L^2}
        \lesssim 1.
    \end{align*}
    Thus, we have
    \begin{equation}\label{eq:bfD1 worst}
        \begin{aligned}
            |b_1|^n \|\partial_y^\ell B_Q(\phi_n \ yQ)\|_{L^2}&\lesssim \lambda_0^{\ell+2-\frac12-\frac{n}{2L}}\lesssim \lambda_0^{\ell+1}.
            \\
            |b_1|^n \|Q\partial_y^\ell B_Q(\phi_n  yQ)\|_{L^2}&\lesssim \lambda_0^{\ell+2+\frac12-\frac{n}{2L}}\lesssim \lambda_0^{\ell+2}, &&(2n-2\neq\ell),
            \\
            |b_1|^n \|Q\partial_y^\ell B_Q(\phi_n  yQ)\|_{L^2}&\lesssim \lambda_0^{\ell+2-\frac{\ell+2}{4L}}, &&(2n-2=\ell).
        \end{aligned}
    \end{equation}
    These conclude
    \begin{align} \label{eq:bfD1 1}
        b_1^n\|Q\partial_y^\ell B_Q(\phi_n  yQ)\|_{L^2} |{\textstyle\int_\bbR} Q f|\lesssim 
        \lambda_0^{\ell+2-\frac{\ell+2}{4L}}|{\textstyle\int_\bbR} Q f|.
    \end{align}
    Next, we estimate $\partial_y^\ell (\phi_n \calH g -\tfrac12 p_n g-\tfrac12 \calH(p_n \calH g))$ part. By \eqref{eq:phi and p esti}, we derive
    \begin{align}
    	|b_1|^n\|\partial_y^\ell (\phi_n \calH g) \|_{L^2}\lesssim |b_1|^n\sum_{a=0}^\ell\|\partial_y^a \phi_n\|_{L^\infty}\|\partial_y^{\ell-a} g\|_{L^2}
    	\lesssim  \lambda_0^{2-\frac{n}{2L}}\sum_{a=0}^\ell \lambda_0^a \|\partial_y^{\ell-a}B_Q f\|_{L^2}. \label{eq:bfD1 2}
    \end{align}
    Similarly, again using \eqref{eq:phi and p esti}, we have
    \begin{align} \label{eq:bfD1 3}
    	|b_1|^n(\|\partial_y^\ell (p_n g) \|_{L^2}+ \|\partial_y^\ell \calH(p_n \calH g) \|_{L^2})
        &\lesssim \lambda_0^2 \sum_{a=0}^\ell \lambda_0^a\|\partial_y^{\ell-a}B_Q f\|_{L^2}.
    \end{align}
    For the $Q$-weighted part, we derive
    \begin{align}
    	|b_1|^n\|Q\partial_y^\ell (\phi_n \calH g) \|_{L^2}&\lesssim
        |b_1|^n\sum_{a=0}^\ell\|Q\partial_y^a \phi_n\|_{L^2}\|\calH\partial_y^{\ell-a} g\|_{L^\infty} \nonumber
        \\
        &\lesssim
        |b_1|^n\sum_{a=0}^\ell R^{2n-2-a}\|\partial_y^{\ell-a} g\|_{L^2}^{\frac12}\|\partial_y^{\ell+1-a} g\|_{L^2}^{\frac12} \nonumber
        \\
        &\lesssim
        |b_1|^n\sum_{a=0}^\ell R^{2n-2-a}(\lambda_0^{\frac12}\|\partial_y^{\ell-a} g\|_{L^2} + \lambda_0^{-\frac12}\|\partial_y^{\ell+1-a} g\|_{L^2}) \nonumber
        \\
        &\lesssim \lambda_0 \sum_{a=0}^{\ell+1} \lambda_0^a\|\partial_y^{\ell+1-a}B_Q f\|_{L^2}, \label{eq:bfD1 4}
    \end{align}
    and
    \begin{equation}\label{eq:bfD1 5}
        \begin{aligned}
            \|Q\partial_y^\ell (p_n g) \|_{L^2}+ \|Q\partial_y^\ell \calH(p_n \calH g) \|_{L^2} &\lesssim \|\partial_y^\ell (p_n g) \|_{L^2}+ \|\partial_y^\ell \calH(p_n \calH g) \|_{L^2}
            \\
            &\lesssim \lambda_0^2 \sum_{a=0}^\ell \lambda_0^a\|\partial_y^{\ell-a}B_Q f\|_{L^2}.
        \end{aligned}
    \end{equation}
    Collecting \eqref{eq:bfD1 1}--\eqref{eq:bfD1 5}, we get \eqref{eq:bfD mth} for $\bfD_1$.

    \textbf{Step 3.} We show \eqref{eq:bfD mth} for $\bfD_{2,n}$. For $1\le n\le L$, define
    \begin{align*}
    	q_n\coloneqq \calH (p_n) = Q^2\phi_n+\calH(yQ^2\phi_n).
    \end{align*}
    Then, \eqref{eq:phi and p esti} implies
    \begin{align}\label{eq: q esti}
    	\|\partial_y^a q_n\|_{L^\infty}\lesssim R^{2n-3-a}.
    \end{align}
    Set $g\coloneqq B_Qf$. Since \eqref{eq:BQBQstar equal I}, we have $f=B_Q^*g+\tfrac{1}{2\pi}Q{\textstyle \int_{\bbR}}Qf$, or
    \begin{align}
    	Qf=\tfrac1{\sqrt2}(yQ^2g+Q^2\calH g)+\tfrac{1}{2\pi}Q^2 {\textstyle \int_{\bbR}}Qf. \label{eq:bfD2 BQ g}
    \end{align}
    Now, we consider
    \begin{align*}
    	B_Q\bfD_{2,n}f=
    	\tfrac{1}{\sqrt2} \ol{{\bm\beta}_n} (y-\calH)[Q^2\calH((1+y^2)Q\phi_n f)].
    \end{align*}
    By \eqref{eq:HilbertProductRule}, we get
    \begin{align*}
    	(y-\calH)(Q^2\calH h)=Q^2h+\calH(yQ^2h).
    \end{align*}
    Applying this identity with $h=(1+y^2)Q\phi_n f$, we find
    \begin{align*}
    	B_Q\bfD_{2,n}f=\sqrt2 \ol{{\bm\beta}_n}(Q\phi_nf+\calH(yQ\phi_n f)).
    \end{align*}
    Substituting \eqref{eq:bfD2 BQ g} for $Qf$, we obtain
    \begin{align*}
    	B_Q\bfD_{2,n}f=\, &
    	\ol{{\bm\beta}_n}
    	[
    	yQ^2\phi_n g+Q^2\phi_n\calH g+\calH(y^2Q^2\phi_n g)+\calH(yQ^2\phi_n \calH g)
    	] +\tfrac{\ol{{\bm\beta}_n}}{\sqrt2 \pi} q_n {\textstyle \int_{\bbR}}Qf.
    \end{align*}
    Again using \eqref{eq:HilbertProductRule}, we have
    \begin{align*}
    	\calH(yQ^2\phi_n\calH g)
    	=\calH(yQ^2\phi_n)\calH g-yQ^2\phi_n g -\calH(\calH(yQ^2\phi_n) g).
    \end{align*}
    With this and the fact that $B_Q\td\bfD_Q^\ell=\partial_y^\ell B_Q$, we obtain 
     \begin{align*}
    	B_Q \td\bfD_Q^\ell \bfD_{2,n}f
    	=\ol{{\bm\beta}_n} \partial_y^\ell[2\calH(\phi_n g)+q_n \calH g -\calH(q_n g)]
    	+\tfrac{1}{\sqrt2 \pi}\ol{{\bm\beta}_n} \partial_y^\ell q_n\cdot {\textstyle \int_{\bbR}}Qf.
    \end{align*}
    $\partial_y^\ell[2\calH(\phi_n g)+q_n \calH g -\calH(q_n g)]$ can be estimated similarly to the previous step:
    \begin{align*}
        |\ol{{\bm\beta}_n} |\| \partial_y^\ell[2\calH(\phi_n g)+q_n \calH g -\calH(q_n g)] \|_{L^2} &\lesssim (\lambda_0^{2-\frac{n}{2L}}+\lambda_0^2)\sum_{a=0}^\ell \lambda_0^a \|\partial_y^{\ell-a}B_Q f\|_{L^2},\\
        |\ol{{\bm\beta}_n}|\|  Q\partial_y^\ell[2\calH(\phi_n g)+q_n \calH g -\calH(q_n g)] \|_{L^2} &\lesssim \lambda_0\sum_{a=0}^{\ell+1} \lambda_0^a \|\partial_y^{\ell+1-a}B_Q f\|_{L^2}.
    \end{align*}
    Using \eqref{eq: q esti}, the remaining term also can be easily checked as follows:
    \begin{align*}
        | \ol{ {\bm\beta}_n} {\textstyle \int_{\bbR}}Qf|\|\partial_y^\ell q_n\|_{L^2} \lesssim |b_1|^n \| f\|_{L^2} R^{2n-3-\ell} \lesssim \lambda_0^{\ell+3- \frac{n}{2L} } \|f\|_{L^2}
        \leq \lambda_0^{\ell+2}\|f\|_{L^2}.
    \end{align*}
    
    \textbf{Step 4.} We show \eqref{eq:bfD mth} for $\bfD_{3,n,m}$. Set $g\coloneqq B_Qf$. Similarly to the previous step, \eqref{eq:BQBQstar equal I} implies
    \begin{align*}
    	(1+y^2)Qf=\sqrt2(y+\calH)g+\tfrac{1}{\pi}{\textstyle\int_{\bbR}}Qf.
    \end{align*}
    Hence,
    \begin{align*}
    	\calH((1+y^2)Q\phi_m f)
    	=\sqrt2\calH(\phi_m(y+\calH)g)+\tfrac{1}{\pi}\calH(\phi_m){\textstyle\int_{\bbR}}Qf.
    \end{align*}
    Therefore,
    \begin{align*}
    	B_Q\bfD_{3,n,m}f
    	=\,&\sqrt2{\bm\beta}_n \ol{{\bm\beta}_m}(y-\calH)[\phi_n\calH((1+y^2)Q\phi_m f)]
    	\\
    	=\,&2{\bm\beta}_n \ol{{\bm\beta}_m}(y-\calH)[\phi_n\calH(\phi_m(y+\calH)g)]
    	\\
    	&+\tfrac{\sqrt2}{\pi}{\bm\beta}_n \ol{{\bm\beta}_m}(y-\calH)(\phi_n\calH(\phi_m))\cdot{\textstyle\int_{\bbR}}Qf.
    \end{align*}
    Since $B_Q\td\bfD_Q^\ell=\partial_y^\ell B_Q$, we obtain
    \begin{align*}
    	B_Q\td\bfD_Q^\ell \bfD_{3,n,m}f
    	=\,&2{\bm\beta}_n \ol{{\bm\beta}_m}\partial_y^\ell[(y-\calH)\bigl(\phi_n\calH(\phi_m(y+\calH)g)\bigr)]
    	\\
    	&+\tfrac{\sqrt2}{\pi}{\bm\beta}_n \ol{{\bm\beta}_m}\partial_y^\ell[(y-\calH)\bigl(\phi_n\calH(\phi_m)\bigr)]\cdot{\textstyle\int_{\bbR}}Qf.
    \end{align*}
    First, we recall the bounds for $\phi_{n}$,
    \begin{align}
    	\|\partial_y^a\phi_n\|_{L^\infty}\lesssim R^{2n-2-a},
    	\quad
    	\|\partial_y^a(y\phi_n)\|_{L^\infty}\lesssim R^{2n-1-a},\quad \|\partial_y^a\phi_n\|_{L^2}
    	\lesssim R^{2n-2-a+\frac12}. \label{eq:phi bound}
    \end{align}
     The $\partial_y^\ell[(y-\calH)\bigl(\phi_n\calH(\phi_m)\bigr)]$ term can be checked easily as follows:
    \begin{align*}
    	{\bm\beta}_n \ol{{\bm\beta}_m}\|\partial_y^\ell[(y-\calH)(\phi_n\calH(\phi_m))]\|_{L^2}
    	&\lesssim b_1^{n+m}\sum_{a=0}^\ell \||\partial_y^a(y\phi_n)| + |\partial_y^a\phi_n|\|_{L^\infty}\|\partial_y^{\ell-a}\phi_m\|_{L^2}
    	\\
    	&\lesssim \lambda_0^{2n+2m - \frac{n+m}{2L}} R^{2n+2m-\frac52-\ell} \lesssim \lambda_0^{\ell + 2 +\frac{1}{2} - \frac{n+m}{2L}},
    \end{align*}
    and 
    \begin{align*}
    	&{\bm\beta}_n \ol{{\bm\beta}_m}\|Q\partial_y^\ell((y-\calH)(\phi_n\calH(\phi_m)))\|_{L^2}\\
        &\lesssim b_1^{n+m} \sum_{a=0}^{\ell}\|\partial_y^a(y\phi_n)\|_{L^\infty}\|\partial_y^{\ell-a}\phi_m\|_{L^{2}}^{\frac12}\|\partial_y^{\ell+1-a}\phi_m\|_{L^{2}}^{\frac12} +\| \partial_y^a\phi_n\|_{L^\infty}\|\partial_y^{\ell-a}\phi_m\|_{L^2} \\ 
    	&\lesssim  \lambda_0^{2n+2m - \frac{n+m}{2L}} (R^{2n+2m-3-\ell} +R^{2n+2m-\frac{7}{2}-\ell} )  \lesssim \lambda_0^{\ell + 2 +1 - \frac{n+m}{2L}}.
    \end{align*}
    Next, we estimate the $\partial_y^\ell[(y-\calH)\bigl(\phi_n\calH(\phi_m(y+\calH)g)\bigr)]$ term. Using $R\|\partial_y^{a_1}\phi_n\|_{L^{\infty}}\sim \|\partial_y^{a_1}(y\phi_n)\|_{L^{\infty}}$, we have
    \begin{align*}
    	&{\bm\beta}_n \ol{{\bm\beta}_m}\|\partial_y^\ell((y-\calH)(\phi_n\calH(\phi_m(y+\calH)g)))\|_{L^2}
    	\\
    	&\lesssim b_1^{n+m}\sum_{a_1+a_2\le \ell}\|\partial_y^{a_1}(y\phi_n)\|_{L^\infty}\|\partial_y^{a_2}(y\phi_m)\|_{L^\infty}\|\partial_y^{\ell-a_1-a_2}g\|_{L^2}
    	\\
    	&\lesssim \lambda_0^{2n+2m - \frac{n+m}{2L}}\sum_{a=0}^{\ell} R^{2n+2m-2-a}\|\partial_y^{\ell-a}B_Q f\|_{L^2} \lesssim \lambda_0^{2-\frac{n+m}{2L}}\sum_{a=0}^\ell \lambda_0^a \|\partial_y^{\ell-a}B_Q f\|_{L^2},
    \end{align*}
    which concludes the first estimate of \eqref{eq:bfD mth} for $\bfD_3$.
    We now consider the $Q$-weighted term, $\|Q\partial_y^\ell((y-\calH)(\phi_n\calH(\phi_m(y+\calH)g)))\|_{L^2}$. One can find that the worst term is
    \begin{align*}
    	\|Q\partial_y^\ell(y\phi_n\calH(y\phi_m g))\|_{L^2}.
    \end{align*}
    The remaining terms can be easily estimated by replacing at least one of $y\phi$ to $\phi$ in the weight-free bound obtained immediately before. Thus, it suffices to estimate this worst term.
    We have
    \begin{align}
    	\|Q\partial_y^\ell(y\phi_n\calH(y\phi_m g))\|_{L^2}
    	\lesssim& \sum_{\substack{0\leq a_1\leq \ell,\\ a_1\neq 2n-1}} \|Q\partial_y^{a_1}(y\phi_n)\|_{L^\infty}\|\partial_y^{\ell-a_1}\calH(y\phi_m g)\|_{L^2} \label{eq:bfD3 1}
    	\\
    	&+\|Q\partial_y^{2n-1}(y\phi_n)\partial_y^{\ell-2n+1}\calH(y\phi_m g)\|_{L^2}. \label{eq:bfD3 2}
    \end{align}
    For the first term, thanks to \eqref{eq:phi bound}, we obtain
    \begin{align*}
        |{\bm\beta}_n \ol{{\bm\beta}_m}|\text{RHS}\eqref{eq:bfD3 1} &\lesssim b_1^{n+m}\sum_{a_1+a_2\leq \ell} R^{2n-2-a_1}R^{2m-1-a_2}\|\partial_y^{\ell-a_1-a_2}g\|_{L^2}
        \\
        &\lesssim \lambda_0^2\sum_{a=0}^{\ell} \lambda_0^a\|\partial_y^{\ell-a}g\|_{L^2}.
    \end{align*}
    For \eqref{eq:bfD3 2}, following the proof of \eqref{eq:bfD1 4}, we have
    \begin{equation*}
        \begin{aligned}
            b_1^{n+m} \eqref{eq:bfD3 2}\lesssim 
            \lambda_0^{1+\frac12-\frac{n+m}{2L}}\sum_{a=0}^{\ell+1} \lambda_0^a\|\partial_y^{\ell+1-a}g\|_{L^2}.
        \end{aligned}
    \end{equation*}
    Similarly to \eqref{eq:bfD1 worst}, this term can only appear when $2n-1\leq \ell$ (due to $\partial_y^{\ell-2n +1}$ in \eqref{eq:bfD3 2}). Hence, we get
    \begin{align*}
        b_1^{n+m} \eqref{eq:bfD3 2}\lesssim 
        \lambda_0^{1-\frac{\ell+1}{4L}}\sum_{a=0}^{\ell+1} \lambda_0^a\|\partial_y^{\ell+1-a}g\|_{L^2}.
    \end{align*}
    
     \textbf{Step 5.} We show \eqref{eq:bfD mth} for $\bfD_{j,\eps}$. We set $g\coloneqq B_Qf$ so that $f=B_Q^*g+\tfrac{1}{2\pi}Q{\textstyle \int_{\bbR}}Qf$. Since $\| V_L\|_{H^{2L}} \lesssim 1$, it suffices to show that for any $h \in H^{2L}$, 
     \begin{align}
          \|  \partial_y^{\ell} B_Q (\mathring{\eps}_0 \calH (\ol{h}B_Q^*g)+ h\calH(\ol{\mathring{\eps}_0}B_Q^*g))\|_{L^2} & \lesssim \lambda_0^2 \sum_{a=0}^\ell \lambda_0^a
            \|\partial_y^{\ell-a} g\|_{L^2},\label{eq:D eps 1}\\
            |\textstyle\int_{\bbR} Qf| \|  \partial_y^{\ell} B_Q (\mathring{\eps}_0 \calH (\ol{h}Q)+ h\calH(\ol{\mathring{\eps}_0}Q))\|_{L^2}&\lesssim \lambda_0^{\ell +2} \|f\|_{L^2}.\label{eq:D eps 2}
     \end{align}
     Similarly to the previous steps, we can easily check that
     \begin{align*}
         \text{LHS}\eqref{eq:D eps 1} &\lesssim \sum_{0\le a_1,a_2,a_3 \le \ell}\| \partial_y^{a_1}  \mathring{\eps}_0  \|_{L^{\infty}}\| \partial_y^{a_2}  h  \|_{L^{\infty}}  \|\partial_y^{a_3} g  \|_{L^{2}} \lesssim \lambda_0^{10L} \sum_{a=0}^\ell 
            \|\partial_y^{\ell-a} g\|_{L^2}, \\
            \text{LHS}\eqref{eq:D eps 2} &\lesssim\sum_{0\le a_1,a_2 \le \ell}\| \partial_y^{a_1}  \mathring{\eps}_0  \|_{L^{\infty}}\| \partial_y^{a_2}  h  \|_{L^{\infty}}  \|f \|_{L^2} \lesssim \lambda_0^{10L}
            \|f\|_{L^2}.
     \end{align*}
     Therefore, we get \eqref{eq:bfD mth}, which finishes the proof of Lemma~\ref{lem:bfD123 esti}.
\end{proof}

Finally, we prove Lemma~\ref{lem:init approx}.

\begin{proof}[Proof of Lemma~\ref{lem:init approx}]
    For a notational convenience, we omit the time dependency $s_0$. Set
	\begin{align*}
		\bfD \coloneqq \tfrac12 \bfD_1+\tfrac12 \bfD_2+\tfrac12 \bfD_3.
	\end{align*}
	Then
	\begin{align*}
		\wt\eps = V_L+\mathring\eps_0,\qquad
		\td\bfD_w=\td\bfD_Q+\bfD,\qquad
		w_m=\td\bfD_Qw_{m-1}+\bfD w_{m-1}\quad (m\geq 2).
	\end{align*}
    We first claim that
    \begin{align}\label{eq:w1 scale bound}
        \|\td\bfD_Q^\ell w_1\|_{L^2}\lesssim \lambda_0^{\ell+1}.
    \end{align}
    We observe that
    \begin{align}\label{eq:w1 bfD decom}
        w_1=L_Q\wt\eps+N_Q(\wt\eps)=L_Q\wt\eps+\bfD_1\Re(\wt\eps)+\tfrac12 \bfD_2(\wt\eps)+\tfrac12 \bfD_3(\wt\eps).
    \end{align}
    Note that, Lemma~\ref{lem:profile choosing} implies
    \begin{align*}
		\|\td\bfD_Q^\ell L_QV_L\|_{L^2}\lesssim \lambda_0^{m+1}
		\qquad (0\leq \ell\leq 2L-2).
	\end{align*}
    Hence, by Lemma~\ref{lem:bfD123 esti}, we get
    \begin{align*}
        \|\td\bfD_Q^\ell N_Q(\wt\eps)\|_{L^2}\lesssim \lambda_0\sum_{a=0}^{\ell} \lambda_0^a\|\td\bfD_Q^{\ell-a}\wt\eps\|_{L^2}.
    \end{align*}
    In addition, the definition of $B_Q$ with \eqref{eq:DQ BQ decomp} yields
    \begin{align*}
        \|\td\bfD_Q^{\ell-a}\wt\eps\|_{L^2}
        &\lesssim \|\partial_y^{\ell-a}(y\langle y\rangle^{-1}\wt\eps)\|_{L^2}+\|\partial_y^{\ell-a}(\langle y\rangle^{-1}\wt\eps)\|_{L^2}
        \\
        &\lesssim \sum_{n=1}^{L}b_1^{n}(\|\partial_y^{\ell-a}(y\phi_n)\|_{L^2}+\|\partial_y^{\ell-a}(\phi_n)\|_{L^2})
        +\|\mathring\eps_0\|_{\dot\calH^{\ell-a}}
        \lesssim \lambda_0^{\ell-a}.
    \end{align*}
    Here, $\phi_n$ is given by \eqref{eq:def phi}. Thus, we deduce
    \begin{align*}
        \|\td\bfD_Q^\ell N_Q(\wt\eps)\|_{L^2}\lesssim \lambda_0^{\ell+1},
    \end{align*}
    which concludes \eqref{eq:w1 scale bound}. We now claim, for $1\leq m\leq 2L-1$ and $0\leq \ell\leq 2L-1-m$,
	\begin{align}\label{eq:wm scale bound}
		\|\td\bfD_Q^\ell w_m\|_{L^2}\lesssim \lambda_0^{\ell+m}.
	\end{align}
	For $m=1$ this is exactly the previous estimate. If it holds for $m-1$, then
	\begin{align*}
		\|\td\bfD_Q^\ell w_m\|_{L^2}
		&\leq \|\td\bfD_Q^{\ell+1}w_{m-1}\|_{L^2}+\|\td\bfD_Q^\ell\bfD w_{m-1}\|_{L^2}
		\\
		&\lesssim \lambda_0^{\ell+m}
		+\lambda_0\sum_{a=0}^\ell \lambda_0^a \|\td\bfD_Q^{\ell-a}w_{m-1}\|_{L^2}
		\lesssim \lambda_0^{\ell+m},
	\end{align*}
    which implies \eqref{eq:wm scale bound} by the induction.

    Now, we prove \eqref{eq:init approx k any}. For the case $k\geq 2$, we claim
    \begin{align} \label{eq:tail compute goal 1}
        (w_{2k-1}-\td\bfD_Q^{2k-2}w_1,\iota yQ\chi)_r
        \lesssim \mathring b_1^k\lambda_0^{\frac{1}{4L}}.
    \end{align}
    By iterating $w_m=\td\bfD_Qw_{m-1}+\bfD w_{m-1}$,
	\begin{align*}
		w_{2k-1}-\td\bfD_Q^{2k-2}w_1
		=\sum_{\ell=0}^{2k-3}\td\bfD_Q^\ell\bfD w_{2k-2-\ell}.
	\end{align*}
    Since $Q^{-1}B_Q(yQ\chi)\in L^2$, for $\iota=1,i$, we get
    \begin{align*}
        (w_{2k-1}-\td\bfD_Q^{2k-2}w_1,\iota yQ\chi)_r
        \lesssim \|Q\bfD w_{2k-2,e}\|_{L^2}+\sum_{\ell=1}^{2k-3}\|QB_Q\td\bfD_Q^\ell\bfD w_{2k-2-\ell}\|_{L^2}.
    \end{align*}
    For $1\leq \ell\leq 2k-3$, by \eqref{eq:bfD mth} and \eqref{eq:wm scale bound}, we deduce
    \begin{align*}
        \|QB_Q\td\bfD_Q^\ell\bfD w_{2k-2-\ell}\|_{L^2}
        \lesssim 
        \lambda_0^{1-\frac{\ell+2}{4L}}\sum_{a=0}^{\ell+1} \lambda_0^a\|\td\bfD_Q^{\ell+1-a}w_{2k-2-\ell}\|_{L^2}
        \lesssim \lambda_0^{2k-\frac{2k-1}{4L}}\sim \mathring b_1^k\lambda_0^{\frac{1}{4L}}.
    \end{align*}
    Moreover, from \eqref{eq:bfD first} and \eqref{eq:wm scale bound}, we obtain
    \begin{align*}
         \|Q\bfD w_{2k-2,e}\|_{L^2}\lesssim \mathring b_1 \lambda^{2k-2}\lesssim \mathring b_1^k\lambda_0^{\frac{1}{4L}}.
    \end{align*}
    Hence, we arrive at \eqref{eq:tail compute goal 1}. Next, for all $1\leq k\leq L$, by \eqref{eq:bfD mth} and \eqref{eq:w1 bfD decom}, we derive
    \begin{align*}
        &(\td\bfD_Q^{2k-2}w_1-\td\bfD_Q^{2k-2}L_Q\wt\eps,\iota yQ\chi)_r
        \\
        &\lesssim \sum_{j=1}^3\|QB_Q\td\bfD_Q^{2k-2}\bfD_j\wt\eps\|_{L^2}
        \lesssim \lambda_0^{1-\frac{2k-1}{4L}}\sum_{a=0}^{2k-1} \lambda_0^a\|\td\bfD_Q^{2k-1-a}\wt\eps\|_{L^2}
        +\lambda_0^{2k-\frac{k}{2L}}|{\textstyle\int_\bbR} Q \wt\eps|.
    \end{align*}
    From $\wt\eps = V_L+\mathring\eps_0$, one can check
    \begin{align*}
        \|\wt\eps\|_{L^2}\lesssim 1,\qquad |{\textstyle\int_\bbR} Q \wt\eps|\lesssim \lambda^{\frac12}.
    \end{align*}
    For $\|\td\bfD_Q^{\ell}\wt\eps\|_{L^2}$, using $\wt\eps=\sum \mathring{\bm\beta}_n\phi_n(1+y^2)Q$ with \eqref{eq:BQBQstar equal I} and \eqref{eq:DQ BQ decomp}, we get
    \begin{align*}
        \|\td\bfD_Q^{\ell}(\mathring{\bm\beta}_n\phi_n(1+y^2)Q)\|_{L^2}
        \lesssim \mathring b_1^n\|\partial_y^\ell(y-\calH)\phi_n\|_{L^2}
        \lesssim b_1^n\lambda_0^{-(2n-1-\ell+\frac12)}\lesssim \la_0^{\ell}.
    \end{align*}
    Therefore, we have $\|\td\bfD_Q^{\ell}\wt\eps\|_{L^2}\lesssim \lambda_0^\ell$, which yields
    \begin{align}\label{eq:tail compute goal 2}
        (\td\bfD_Q^{2k-2}w_1-\td\bfD_Q^{2k-2}L_Q\wt\eps,\iota yQ\chi)_r
        \lesssim \lambda_0^{2k-\frac{2k-1}{4L}} \sim \mathring b_1^k \lambda_0^{\frac{1}{4L}}.
    \end{align}
    Combining \eqref{eq:tail compute goal 1} and \eqref{eq:tail compute goal 2}, we finish the proof.
\end{proof}

\appendix

\bibliographystyle{abbrv} 
\bibliography{referenceCM}

\end{document}